
\documentclass[A4,reqno]{amsart}

\usepackage{amsmath,amsthm,amsfonts,amssymb,graphicx,graphics,color}
\hfuzz=3.44182pt 

\hoffset -25truemm \oddsidemargin=25truemm \evensidemargin=25truemm
\textwidth=155truemm \voffset -25truemm \topmargin=25truemm
\headheight=7truemm \headsep=4truemm \textheight=225truemm

\parskip=3pt

\def \CE {{ \mathcal{E}}}

\def \Fd   {\CF_{d} }
\def \ot {\otimes}
\def\C{\mathbb{C}}

\def \CH {{ \mathcal{H}}}
\def \CB {{ \mathcal{B}}}

\font\ita=cmssi10  
scaled\magstep2 \font\normal=cmr10

\def\sig{\sigma}
\def\LL{\big\langle}
\def\RR {\big\rangle}
\def\OM {\Omega}

\def\la {\lambda}
\def \sas {\vskip .06truein}
\def\sa{{\vskip .125truein}}

\def \eee {\epsilon}
\def\aaa {\alpha}
\def\aa {\alpha}
\def\ttt{\theta}

\def\dd{\delta}

\def\con {\subseteq}
\def \ses {\enskip = \enskip}
\def \sps {\, + \,}

\def \sms {\, - \,}

\def \scs {\, , \,}
\def \ess {\enskip}

\def \ssp {\hskip .25em}
\def \bigsp {\hskip .5truein}
\def \part {\vdash}

\normal

\date{September 25, 2008}

\def \RA {{ \rightarrow }}
\def \CC {{ \mathcal{C}}}
\def \CF {{ \mathcal{F}}}
\def \CS {{ \mathcal{S}}}
\def\xon {x_1,x_2,\ldots ,x_n}
\def \om {\omega}

\def \scs {\ssp , \ssp}
\def \ess {\enskip}
\def \ssp {\hskip .25em}
\def \bigsp {\hskip .5truein}
\def \part {\vdash}
\font\normal=cmr10
\vskip -.5in

\def \I {{\rm I}}
\newcommand\mymatrix[1]{\begin{matrix}
  #1
\end{matrix}}
\newcommand\eqalign[1]{\begin{matrix}
  #1
\end{matrix} }

\def\mytopsep{3mm}


\newtheoremstyle{myremark}{\mytopsep}{\mytopsep}{\normalfont}{0pt}{\bfseries}{.}{3mm}{}

\newtheorem{Theorem}{Theorem}[section]
\newtheorem{Proposition}[Theorem]{Proposition}

\theoremstyle{myremark}
\newtheorem{Remark}[Theorem]{Remark}
\newtheorem{Example}[Theorem]{Example}
\newtheorem{Algorithm}[Theorem]{Algorithm}

\newcommand\myitem[1]{\item[#1]}

\def\multi#1{\vbox{\baselineskip=0pt
\halign{\hfil$\scriptstyle\vphantom{(_)}##$\hfil\cr#1\crcr}}}
\def\mypicture #1 by #2 (#3){
  \vbox to #2{
    \hrule width #1 height 0pt depth 0pt
    \vfill
    \special{picture #3} 
    }
  }

\def \con {\subseteq}

\title[Invariants, Kronecker Products, and Diophantine Systems]{Invariants, Kronecker Products, and \\
Combinatorics of Some Remarkable Diophantine Systems\\ (Extended
Version)}

\author[A. Garsia, G. Musiker, N. Wallach, G. Xin]{Adriano Garsia$^1$, Gregg Musiker$^2$, Nolan Wallach$^3$, Guoce Xin$^4$}

\address{$^{1,2,3}$ Department of Mathematics, UCSD, CA\\
$^2$ Department of Mathematics, MIT, MA\\
 $^4$ Center for
Combinatorics, LPMC-TJKLC,
 Nankai University,
 Tianjin 300071, P.
R. China }

\begin{document}
\maketitle

\begin{abstract}
This work lies across three areas (in the title) of investigation
that are by themselves of independent interest. A problem that arose
in quantum computing led us to a link that tied these areas
together. This link consists of a single formal power series with a
multifaced interpretation. The deeper exploration of this link
yielded results as well as methods for solving some numerical
problems in each of these separate areas.
\end{abstract}

\begin{small}
{{Key words:} Invariant, Kronecker product, Diophantine system,
Hilbert series}

{{Mathematics Subject Classification:} Primary 05A15, secondary
05E05, 34C14, 11D45.}
\end{small}

\def \BF {{\bf F}}
\def \BB {{\bf B}}
\def \BA {{\bf A}}
\def \BM {{\bf M}}
\def \BR {{\bf R}}
\def \I {{\rm I}}

\section{Introduction}

Since our work may be of interest to audiences of varied background
we will try to keep our notation as elementary as possible and
entirely self contained.

The problem in invariant theory that was the point of departure in
our investigation is best stated  in its simplest and most
elementary version. Given two matrices $ A = \Big[\mymatrix{a_{11}&
a_{12} \cr a_{21}& a_{22} \cr }\Big]$ and $B =
\Big[\mymatrix{b_{11}& b_{12} \cr b_{21}& b_{22} \cr }\Big]$ of
determinants $1$, or equivalently in $SL[2]:=SL(2,\C)$, we recall
that their tensor product may be written in the block form
\begin{align}
 A\otimes B= \left[\mymatrix{a_{11} B & a_{12} B\cr a_{21} B &
a_{22} B\cr}\right]. \label{I.1}
\end{align}

 We also recall that the action of a matrix $
M=[m_{ij}]_{i,j=1}^n$ on a polynomial $P(x)$ in $\BR_n:=\C[\xon]$
may be defined  by setting
\begin{align}
T_MP(x)= P(xM), \label{I.2}
\end{align}
where the symbol $xM$ is to be interpreted as multiplication of a
row $n$-vector by an $n\times n$ matrix.
 This given, we denote by $\BR_4^{SL[2]\otimes SL[2]}$ the ring of polynomials in $\BR_4$ that  are invariant
under the action of $ A\otimes B$ for all pairs ${A,B}\in SL[2]$. In
symbols
\begin{align}
 \BR_4^{SL[2]\otimes SL[2]}= \big  \{P\in \BR_4\, :\,
T_{A\otimes B}P(x)=P(x)\, \big  \}. \label{I.3}
\end{align}
Since the action in \eqref{I.2} preserves degree and homogeneity,
$\BR_4^{SL[2]\otimes SL[2]} $ is graded, and as a vector space it
decomposes into the direct sum
$$
\BR_4^{SL[2]\otimes SL[2]}= \bigoplus_{m\ge
0}\CH_m\big(\BR_4^{SL[2]\otimes SL[2]} \big),
$$
where the $m^{\mathrm{th}}$ direct summand  here denotes the
subspace consisting of the   $SL[2]\otimes SL[2]$-invariants that
are homogenous of degree $m$. The natural problem then arises to
determine the Hilbert series
$$
W_2(q)= \sum_{m\ge 0} \, q^m \dim \, \CH_m\big(\BR_4^{SL[2]\otimes
SL[2]} \big).
$$
Now note that using \eqref{I.1} iteratively we can define the
$k$-fold tensor product $ A_1\otimes A_2\otimes\cdots \otimes A_k, $
and thus  extend \eqref{I.3} to its general form
\begin{align*}
\BR_{2^k}^{SL[2]\otimes SL[2] \otimes\cdots \otimes SL[2]}= \big
\{P\in \BR_{2^k}\, :\, T_{A_1\otimes A_2\otimes\cdots
\otimes A_k}P(x)=P(x)\, \big  \} 
\end{align*}
and set
\begin{align*}
 W_k(q)= \sum_{m\ge 0} \, q^m \dim \,
\CH_m\big(\BR_{2^k}^{SL[2]\otimes SL[2] \otimes\cdots \otimes
SL[2]}\ \big). 
\end{align*}
Remarkably, to this date only  the series  $W_2(q), W_3(q), W_4(q),
W_5(q)$ are known explicitly. Moreover, although the  three series
$W_2(q),W_3(q),W_4(q)$ may be hand computed, so far $W_5(q)$ has
only been obtained by computer.

The third named author, using branching tables calculated in
\cite{7}, was able to predict the explicit form of $W_5(q)$ by
computing a sufficient number of its coefficients. The computation
of these tables took  approximately 50 hours using an array of $9$
computers.

The series $ W_4(q), W_5(q)$ first appeared in print in works of
Luque-Thibon \cite{4}, \cite{5} which were motivated by the same
problem of quantum computing. We understand that their computation
of $W_5(q)$ was carried out by a brute force use of the partial
fraction algorithm of the fourth named author, and  it required
several hours with the computers of that time.

The present work was carried out whilst unaware of the work of
Luque-Thibon. Our main  goal is to acquire a theoretical
understanding of the combinatorics underlying  such Hilbert series
and give a more direct construction of $W_5(q)$ and perhaps bring
$W_6(q)$ within reach of present computers.

 Fortunately, as is often the case with a difficult problem, the methods that
are developed to solve it  may be more significant than the  problem
itself. This is no exception as we shall see.

Let us recall that  the pointwise product of two characters
$\chi^{(1)}$ and $\chi^{(2)}$ of the symmetric group $S_n$ is also a
character of $S_n$, and we shall denote it here by
$\chi^{(1)}\odot\chi^{(2)}$. This is usually called the \hbox{\emph{
Kronecker }} product of $\chi^{(1)}$ and $\chi^{(2)}$. An
outstanding yet unsolved problem is to obtain a combinatorial rule
for the computation of the integer
\begin{align}
 c_{\la^{(1)},\la^{(2)},\ldots ,\la^{(k)}}^\la \label{I.6}
\end{align} giving the multiplicity of $\chi^\la$ in the Kronecker product
$\chi^{\la^{(1)}}\odot \chi^{\la^{(2)}}\odot\cdots \odot
\chi^{\la^{(k)}}$. Here $\chi^\la$ and each  $\chi^{\la^{(i)}}$ are
irreducible Young characters of $S_n$. Using the Frobenius map $\bf
F$ that sends the irreducible character $\chi^\la$ onto the Schur
function $S_\la$, we can define the Kronecker product of two
homogeneous symmetric functions of the same degree $f$ and $g$  by
setting
$$
f\odot g = \BF \big((\BF^{-1}f)\odot(\BF^{-1}g) \big).
$$
With this notation the coefficient in \eqref{I.6} may also be
written in the form
\begin{align*}
 c_{\la^{(1)},\la^{(2)},\ldots ,\la^{(k)}}^\la = \LL
s_{\la^{(1)} }\odot s_{\la^{(2)} }\odot\cdots \odot s_{\la^{(k)} }
\scs s_\la\RR, 
\end{align*}
where $\LL\scs \RR$ denotes the customary Hall scalar product of
symmetric polynomials. The relevancy of all this to the previous
problem is a consequence of the following identity. \sas

\begin{Theorem}
\label{t-I.1}
\begin{align}
 W_k(q)= \sum_{d\ge 0} q^{2d}  \LL s_{d,d} \odot s_{d,d} \odot\cdots \odot s_{d,d} \scs s_{2d}\RR
\label{I.8}
\end{align}
where, in each term,  the Kronecker product has $k$ factors.
\end{Theorem}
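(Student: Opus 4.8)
The plan is to interpret $W_k(q)$ representation-theoretically and then translate everything to symmetric functions via the Frobenius characteristic. First I would recall the classical fact that for $SL[2]$ acting on $\C^2$, the polynomial ring $\C[u_1,u_2]$ decomposes as $\bigoplus_{d\ge 0}V_d$, where $V_d$ is the irreducible $SL[2]$-module of dimension $d+1$ realized on the homogeneous polynomials of degree $d$. Hence, in the $k$-fold tensor situation, the degree-$m$ part $\CH_m(\BR_{2^k})$ of $\C[x_1,\dots,x_{2^k}]$ is, as an $SL[2]^{\times k}$-module (acting through $A_1\ot\cdots\ot A_k$), a certain sum of tensor products $V_{d_1}\boxtimes\cdots\boxtimes V_{d_k}$. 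Taking $SL[2]\ot\cdots\ot SL[2]$-invariants picks out exactly the trivial isotypic component, so $\dim\CH_m(\BR_{2^k}^{SL[2]\ot\cdots\ot SL[2]})$ counts the multiplicity of $V_0\boxtimes\cdots\boxtimes V_0$ there. Because $V_d$ contains an $SL[2]$-invariant vector if and only if $d=0$, the only way to get an invariant in degree $m$ is for $m=2d$ to be even and for each tensor factor to come from the diagonal copy of $V_d$; this is where the partitions $(d,d)$ of $2d$ will enter.

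The key step is to identify the multiplicity generating function on each tensor factor with a Kronecker coefficient. Here I would use Schur--Weyl-type reasoning (or equivalently the theory of the $(GL_2,GL_2)$-duality on $\C^{2}\ot\C^{2}$): the space of bihomogeneous polynomials, or more precisely the decomposition of $\CH_m(\C[x_1,\dots,x_4])$ under $SL[2]\times SL[2]$, is governed by the fact that $S_{d,d}$ is precisely the Schur function that records, via the classical correspondence, the degree-$2d$ piece on a single tensor slot whose $SL[2]$-type is $V_0$. Concretely, the multiplicity of the trivial $SL[2]^{\times k}$-representation in degree $2d$ is $\LL s_{d,d}\odot\cdots\odot s_{d,d}\scs s_{2d}\RR$ because each Kronecker factor $s_{d,d}$ encodes one tensor leg, the plethystic/Frobenius bookkeeping turns "simultaneous invariants of a $k$-fold tensor action" into a $k$-fold pointwise (Kronecker) product of the relevant characters, and pairing against $s_{2d}$ (the trivial $S_{2d}$-character) extracts the multiplicity of the trivial summand. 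Summing over $d\ge 0$ with the weight $q^{2d}$ (only even degrees contribute) gives exactly \eqref{I.8}.

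Carrying this out in detail, I would: (1) fix notation for the $SL[2]$-decomposition of $\BR_2=\C[u_1,u_2]$ and, by iterating \eqref{I.1}, for the $SL[2]^{\times k}$-decomposition of $\BR_{2^k}$; (2) write $\dim\CH_m(\BR_{2^k}^{\ot^k SL[2]})$ as the multiplicity $\big[V_0^{\boxtimes k}:\CH_m(\BR_{2^k})\big]$; (3) use the classical $GL$--$S_n$ duality (or an explicit highest-weight computation) to show that on each tensor factor the relevant "invariant-carrying" $SL[2]$-component in degree $m$ is nonzero only when $m=2d$ and is recorded by $S_{d,d}$; (4) assemble the $k$ factors and observe that the combinatorics of taking simultaneous invariants of the diagonal $S_{2d}$-action is exactly the iterated Kronecker product, so the multiplicity is $\LL s_{d,d}^{\odot k}\scs s_{2d}\RR$; (5) sum over $d$.

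The main obstacle is step (3)--(4): making precise why the "single tensor slot" contribution is captured by the \emph{two-row} Schur function $S_{d,d}$ (rather than, say, $S_d$ on $\C^2$) and why combining $k$ slots produces a \emph{Kronecker} (pointwise) product rather than an ordinary product of symmetric functions. This is the crux where one must carefully track which symmetric group is acting on which set of variables — the "internal" $S_{2d}$ that diagonally permutes within each leg versus the structure coming from the $k$ legs — and correctly invoke Schur--Weyl duality together with the fact that $\dim V_d=d+1$ forces the partition shape $(d,d)$ of $2d$. Once this dictionary is set up cleanly, the identity \eqref{I.8} should follow by collecting graded dimensions.
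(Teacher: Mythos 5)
Your route is genuinely different from the paper's. The paper first converts $W_k(q)$ into the constant term \eqref{1.11} via Molien's formula and Weyl integration over $SU[2]^{\ot k}$, then expands the numerator $\prod_i(1-a_i^2)$ into $2^k$ signed terms, interprets each term as the solution count of a shifted Diophantine system, identifies each such count as a scalar product $\LL h_{\mu^{(1)}}\odot\cdots\odot h_{\mu^{(k)}}\scs s_{2d}\RR$ with every $\mu^{(i)}\in\{(d,d),(d+1,d-1)\}$ (using the orbit-counting Theorem \ref{t-2.1} and the fact that $h_{d,d}$ is the Frobenius characteristic of the $S_{2d}$-action on $d$-subsets), and finally recombines the signed sum through the Jacobi--Trudi identity $s_{d,d}=h_dh_d-h_{d+1}h_{d-1}$. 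You instead argue directly in the representation ring: decompose $\CH_m(\BR_{2^k})=\mathrm{Sym}^m\big((\C^2)^{\ot k}\big)$ under $SL[2]^{\times k}$ and extract the trivial isotypic component. Your approach is the more conceptual one (it is essentially how the identity is obtained in the Luque--Thibon and Wallach references) and it explains why the rectangular shape $(d,d)$ must appear; the paper's approach is more combinatorial and is chosen deliberately, since the Diophantine/orbit machinery it sets up is the engine for the rest of the paper.

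What you still need to pin down --- and you correctly locate it as the crux in your steps (3)--(4) --- is the lemma that for vector spaces $U_1,\dots,U_k$ the multiplicity of $\mathbb{S}_{\lambda^{(1)}}(U_1)\ot\cdots\ot\mathbb{S}_{\lambda^{(k)}}(U_k)$ in $\mathrm{Sym}^m(U_1\ot\cdots\ot U_k)$ equals the Kronecker coefficient $\LL s_{\lambda^{(1)}}\odot\cdots\odot s_{\lambda^{(k)}}\scs s_{(m)}\RR$. This follows by applying Schur--Weyl duality in each tensor slot of $(U_1\ot\cdots\ot U_k)^{\ot m}$ and then passing to the $S_m$-symmetric part, which converts the outer product of the modules $S^{\lambda^{(i)}}$ into the internal (Kronecker) product and pairs it against the trivial character $s_{(m)}$; it is this step, not a ``diagonal copy of $V_d$,'' that produces the Kronecker product. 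Combined with the facts that $\mathbb{S}_\lambda(\C^2)=0$ unless $\lambda$ has at most two rows and that $\mathbb{S}_\lambda(\C^2)$ is $SL[2]$-trivial exactly when $\lambda=(d,d)$, this yields \eqref{I.8}. So the proposal is correct, modulo writing out this standard duality statement carefully.
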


For this reason, we will often refer to the task of constructing
$W_k(q)$ as the \emph{ Sdd Problem}. Using this connection and some
auxiliary results on the Kronecker product of symmetric functions we
derived in \cite{2} that
\begin{align}
 W_2( {q})= {1\over 1-q^2}\scs \ess\ess\ess W_3(q)= {1\over
1-q^4} \scs \ess\ess\ess  W_4(q)= {1\over (1-q^2)(1-q^4)^2(1-q^6)}.
\label{3.20}
\end{align}
Although  this approach is worth pursuing (see \cite{2}),  the
present investigation  led us to another surprising facet of this
problem.

Let us start with a special  case. We are asked to place integer
weights on the vertices of the unit square so that all the sides
have equal weights. Denoting by $P_{00}$, $P_{01}$, $P_{10}$,
$P_{11}$ the vertices (see figure) and by $p_{00}$, $p_{01}$,
$p_{10}$, $p_{11}$ their corresponding weights, we are led to the
following Diophantine system.

\medskip
\begin{align*}
\CS_2:\bigg\|\eqalign{ & p_{00}+p_{01}\sms p_{10}-p_{11}= 0 \cr
&p_{00}-p_{01}\sps p_{10}-p_{11}=0 \cr }.
\end{align*}

\vskip -.8in\hfill \mbox{\begin{picture}(50,70) \linethickness{1pt}
\put(0,0){\circle{5}}\put(0,40){\circle{5}}\put(40,0){\circle{5}}\put(40,40){\circle{5}}
\put(2.5,0.75){\line(10,0){34.5}}\put(39.5,3.25){\line(0,10){34.5}}\put(0,3.25){\line(0,10){34.5}}
\put(2.5,40.5){\line(10,0){34.5}}
\put(-12.5,-12.5){\footnotesize{$\mathbf{P_{00}}$}}
\put(45.5,-12.5){\footnotesize{$\mathbf{P_{10}}$}}
\put(-12.5,50.5){\footnotesize{$\mathbf{P_{01}}$}}
\put(45.5,50.5){\footnotesize{$\mathbf{P_{11}}$}}
\end{picture}\ \ \ }

\noindent The general solution to this problem may be expressed as
the formal series
$$
F_2(y_{00},y_{01},y_{10},y_{11})= \sum_{p\in \CS_2} y_{00}^{p_{00}}
y_{01}^{p_{01}} y_{10}^{p_{10}} y_{11}^{p_{11}}= {1\over
(1-y_{00}y_{11})(1-y_{01}y_{10})}.
$$
In particular, making the  substitution  $ y_{00}= y_{01}= y_{10}=
y_{11}=q$  we derive that the enumerator of solutions by total
weight is given by  the generating function
$$
G_2(q)= \sum_{d\ge 0} m_d(2) q^{2d} = {1\over (1-q^2)^2},
$$
with $m_d(2)$ giving the number of solutions of total weight $2d$.

This problem generalizes to arbitrary dimensions. That is we seek to
enumerate the distinct ways of placing weights on the vertices
of the unit $k$-dimensional hypercube so that all hyperfaces have the same weight.
Denoting by $p_{\eee_1\eee_2\cdots \eee_k}$ the weight we place on the vertex of
coordinates $(\eee_1,\eee_2,\ldots,  \eee_k)$ we obtain a Diophantine system $\CS_k$ of $k$
equations in the $2^k$
variables $\{p_{\eee_1\eee_2\cdots \eee_k}\}_{\eee_i=0,1}$.

For instance, using this notation, for the $3$-dimensional cube we
obtain the system
$$
\CS_3:\left\|\eqalign{ & p_{000}\sps p_{001}\sps p_{010}\sps
p_{011}\sms p_{100}\sms p_{101}\sms  p_{110}\sms p_{111}\ses 0\cr &
p_{000}\sps p_{001}\sms p_{010}\sms p_{011}\sps p_{100}\sps
p_{101}\sms  p_{110}\sms p_{111}\ses 0\cr & p_{000}\sms p_{001}\sps
p_{010}\sms p_{011}\sps p_{100}\sms p_{101}\sps  p_{110}\sms
p_{111}\ses 0\cr }\right. \ \ .
$$
In this case the enumerator of solutions by total weight is
$$
G_3(q)\ses \sum_{d\ge 0} m_d(3) q^{2d} \ses {1-q^8\over
(1-q^2)^4(1-q^4)^2}.
$$

The relevance of all this to the previous problem is a consequence
of the following identity.

\begin{Theorem}
\label{t-I.2} Denoting by $m_d(k)$ the number of solutions of the
system $\CS_k$ of total weight $2d$ and setting
\begin{align}
 G_k(q)= \sum_{d\ge 0} m_d(k)q^{2d}, \label{I.10}
\end{align}
we have
\begin{align*}
 G_k(q)= \sum_{d\ge 0} q^{2d}  \LL h_{d,d} \odot h_{d,d} \odot\cdots \odot h_{d,d} \scs
 S_{2d}\RR,
\end{align*}
where, $h_{d,d}$ denotes the homogenous basis element indexed by the
two part partition $(d,d)$, and  in each term, the Kronecker product
has $k$ factors.
\end{Theorem}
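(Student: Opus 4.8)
The plan is to read off the right-hand side of the asserted identity as an orbit count and then to match those orbits bijectively with the solutions of $\CS_k$. Recall first the classical Frobenius description $h_{d,d}=\BF\big(\mathrm{Ind}_{S_d\times S_d}^{S_{2d}}\mathbf 1\big)$, so that $\BF^{-1}(h_{d,d})$ is the character of the permutation representation of $S_{2d}$ on the set $X_d:=\binom{[2d]}{d}$ of $d$-element subsets of $[2d]=\{1,2,\dots,2d\}$. Since for any finite $G$-sets $Y$ and $Z$ the pointwise product of the two permutation characters is the permutation character of the diagonal $G$-action on $Y\times Z$ (fixed-point counts multiply), it follows that $\BF^{-1}\!\big(h_{d,d}\odot h_{d,d}\odot\cdots\odot h_{d,d}\big)$, with $k$ factors, is the permutation character of $S_{2d}$ acting diagonally on the $k$-fold Cartesian power $X_d^{\,k}$. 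As $S_{2d}=\BF(\mathbf 1_{S_{2d}})$ and the Frobenius map carries the Hall scalar product to the inner product of characters, the quantity $\LL h_{d,d}\odot\cdots\odot h_{d,d}\scs S_{2d}\RR$ is the multiplicity of the trivial character in that permutation character, which by Burnside's lemma equals the number $N_{d,k}$ of $S_{2d}$-orbits on $X_d^{\,k}$. Thus the right-hand side of the theorem equals $\sum_{d\ge0}N_{d,k}\,q^{2d}$, and it remains to prove $N_{d,k}=m_d(k)$.

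For this I would exhibit an explicit bijection. A point of $X_d^{\,k}$ is a $k$-tuple $(S_1,\dots,S_k)$ of $d$-subsets of $[2d]$, which is the same datum as a map $c\colon[2d]\to\{0,1\}^k$ whose $i$-th coordinate function is the indicator of $S_i$; the requirement $|S_i|=d$ then says that exactly $d$ of the vectors $c(1),\dots,c(2d)$ have $i$-th entry equal to $1$. Two such maps lie in one $S_{2d}$-orbit precisely when they induce the same multiset of values in $\{0,1\}^k$, that is, when the nonnegative integer vectors $p=(p_\eee)_{\eee\in\{0,1\}^k}$ given by $p_\eee=\#\{\,j\in[2d]:c(j)=\eee\,\}$ coincide; moreover every such vector with $\sum_{\eee}p_\eee=2d$ and $\sum_{\eee:\,\eee_i=1}p_\eee=d$ for all $i$ actually occurs. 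Hence $N_{d,k}$ is the number of nonnegative integer solutions of that system. Granting the total-weight equation $\sum_{\eee}p_\eee=2d$, the condition $\sum_{\eee:\,\eee_i=1}p_\eee=d$ is equivalent to $\sum_{\eee:\,\eee_i=1}p_\eee=\sum_{\eee:\,\eee_i=0}p_\eee$, i.e.\ to the balance of the two hyperfaces of the $k$-cube orthogonal to the $i$-th coordinate direction, which is exactly the $i$-th equation of $\CS_k$; conversely a solution of $\CS_k$ of total weight $2d$ automatically has each hyperface sum equal to $d$. Therefore the orbits with a given $d$ correspond bijectively to the solutions of $\CS_k$ of total weight $2d$, so $N_{d,k}=m_d(k)$, and comparison with the definition \eqref{I.10} of $G_k(q)$ completes the argument.

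All of the representation-theoretic input above is classical (the $h$-basis under Frobenius, multiplicativity of permutation characters under Kronecker product, Burnside's lemma), and the reason $h_{d,d}$ — rather than $s_{d,d}$ — makes this work is that $h_{d,d}$ is an honest permutation character, whereas $s_{(d,d)}=h_{(d,d)}-h_{(d+1,d-1)}$ is only a virtual one, so the orbit-counting argument does not apply verbatim to $W_k$. Consequently I expect the only point needing genuine care to be the combinatorial bookkeeping in the second paragraph: one must check precisely that fixing the total weight at $2d$ together with balancing the $k$ pairs of opposite hyperfaces is exactly equivalent to the $k$ equations displayed in $\CS_k$, and that these balancing conditions are the image of the constraints $|S_i|=d$ on the orbit side. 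As a sanity check I would specialize to $k=2,3$ and recover $G_2(q)=1/(1-q^2)^2$ and $G_3(q)=(1-q^8)/\big((1-q^2)^4(1-q^4)^2\big)$ directly from the orbit description (for $k=2$ an orbit is pinned down by $|S_1\cap S_2|\in\{0,1,\dots,d\}$, giving exactly $d+1$ orbits, in agreement with $1/(1-q^2)^2$).
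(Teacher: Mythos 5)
Your proposal is correct and follows essentially the same route as the paper: the paper likewise identifies $\LL h_{d,d}\odot\cdots\odot h_{d,d}\scs s_{2d}\RR$ as the multiplicity of the trivial character in the diagonal $S_{2d}$-action on $k$-tuples of $d$-subsets (hence the orbit count), and separately proves (its Theorem \ref{t-2.1}) that these orbits are in bijection with the solutions of $\CS_k$ of total weight $2d$ via the cardinalities of the Venn-diagram regions. Your bookkeeping in the second paragraph matches the paper's argument exactly.
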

For this reason, we will refer to  the task of constructing the
series $G_k(q)$ as the \emph{ Hdd Problem}.

Theorem \ref{t-I.2} shows that the algorithmic machinery of
Diophantine analysis may be used in the construction of generating
functions of Kronecker coefficients as well as Hilbert series of
ring of invariants. More precisely we are referring here to the
\emph{ constant term methods} of
 MacMahon partition analysis
which have been recently translated into computer software by
Andrews et al. \cite{1} and Xin \cite{9}.

To see what this leads to, we start by noting that using
MacMahon's approach the solutions of $\CS_2$ may be obtained by the
following identity
$$
F_2(y_{00},y_{01},y_{10},y_{11})= \sum_{p_{00}\ge 0}\sum_{p_{01}\ge
0}\sum_{p_{10}\ge 0}\sum_{p_{11}\ge 0} y_{00}^{p_{00}}
y_{01}^{p_{01}} y_{10}^{p_{10}} y_{11}^{p_{11}}
a_1^{p_{00}+p_{01}\sms p_{10}-p_{11}} a_2^{p_{00}-p_{01}\sps
p_{10}-p_{11}}\Big|_{a_1^0a_2^0},
$$
where the symbol ``$\big|_{a_1^0a_2^0}$" denotes the operator of
taking the constant term in $a_1,a_2$. This identity may also be
written in the form
$$
F_2(y_{00},y_{01},y_{10},y_{11})= {1\over (1-  y_{00}a_1a_2)
(1-y_{01}a_1 /a_2) (1- y_{10}a_2 /a_1) (1- y_{11}/a_1 a_2)}
\Big|_{a_1^0a_2^0}.
$$
In particular the enumerator of the solutions of $\CS_2$ by total weight may be computed from the identity
$$
G_2(q)= {1\over (1- qa_1 a_2)(1- qa_1 /a_2)(1- qa_2/ a_1)(1- q/a_1
a_2)}\Big|_{a_1^0a_2^0}.
$$
More generally we have
\begin{align}
 G_k(q)={1\over {\displaystyle \prod_{S\con [1,k]}}\Big(1-q
\prod _{i\in S }a_i/{\prod _{j\not\in S
}}a_j\Big)}\Bigg|_{a_1^0a_2^0\cdots a_k^0}, \label{I.12}
\end{align}
where we use (and will often use) $[m,n]$ to denote the set $\{\,
m,m+1,\dots,n\,\}$. Now, standard methods of Invariant Theory yield
that we also have
\begin{align}
 W_k(q )={{ \prod_{i=1}^k}\big(1-{ a_i^2}\big)\over
{\displaystyle \prod_{S\con [1,k]}}\Big(1-q \prod _{i\in S }
a_i/{\prod _{j\not\in S }}a_j\Big)}\bigg |_{a_1^0a_2^0\cdots a_k^0}.
\label{I.13}
\end{align}
A comparison of \eqref{I.12} and \eqref{I.13} strongly suggests that
a close study of the combinatorics of Diophantine systems such as
$\CS_k$ should yield a more revealing path to the construction of
such Hilbert series. This idea turned out to be fruitful, as we
shall see,  in that it permitted the solution  of  a variety of
similar problems (see \cite{2}, \cite{3}). In particular, we were
eventually able to obtain that
\begin{align}
 G_5(\sqrt{q})= {N_5\over (1 -  q)^9(1 -  q^2)^8(1 -
q^3)^6(1 -   q^4)^3(1 -   q^5)}, \label{I.14}
\end{align}
with \begin{small} \begin{align*}
 N_5 &= q^{44} + 7 q^{43} + 220 q^{42} + 2606 q^{41} +
24229 q^{40} + 169840 q^{39} + 951944 q^{38} \cr &\hskip .2in +
4391259 q^{37} + 17128360 q^{36}
 + 57582491 q^{35} + 169556652 q^{34} + 442817680 q^{33}
\cr &\hskip .35in + 1036416952 q^{32} + 2192191607 q^{31} +
4219669696 q^{30} + 7433573145 q^{29} + 12041305271 q^{28} \cr
&\hskip .45in + 18003453305 q^{27} + 24921751416 q^{26} +
32017113319 q^{25} + 38243274851 q^{24} + 42524815013 q^{23} \cr
&\hskip .45in + 44052440432 q^{22} + 42524815013 q^{21} +
38243274851 q^{20} + 32017113319 q^{19} + 24921751416 q^{18} \cr
&\hskip .35in + 18003453305 q^{17} + 12041305271 q^{16} + 7433573145
q^{15} + 4219669696 q^{14} + 2192191607 q^{13} \cr &\hskip .2in +
1036416952 q^{12} + 442817680 q^{11} + 169556652 q^{10} + 57582491
q^{9} + 17128360 q^{8} + 4391259 q^{7} \cr & + 951944 q^{6} + 169840
q^{5} + 24229 q^{4} + 2606 q^{3} + 220 q^{2} + 7 q + 1.
\end{align*}
\end{small}
Surprisingly, the presence of the numerator factor in \eqref{I.13}
absent in \eqref{I.12} does not increase the complexity of the
result, as we see by comparing \eqref{I.14} with the Luque-Thibon
result
\begin{align*}
 W_5(\sqrt{q})= {P_5\over  (1-q^ 2)^4 (1-q^3)  (1-q^4)^6
(1-q^ 5) (1-q^ 6)^ 5 }, 
\end{align*}
with
\begin{align*}
 P_5&=   q^{54}+q^{52}+16 q^{50}+9 q^{49}+98 q^{48}+154
q^{47}+465 q^{46}+915 q^{45}+2042 q^{44}+3794 q^{43}+7263 q^{42} \cr
& \ess\ess+12688 q^{41}+21198 q^{40}+34323 q^{39}+52205 q^{38}
+77068 q^{37}+108458 q^{36}+147423 q^{35}+191794 q^{34} \cr &
  \ess\ess\ess\ess\ess +241863 q^{33}+292689 q^{32}+342207 q^{31}+386980 q^{30}+421057 q^{29}+443990 q^{28}+451398 q^{27}
\cr
&
 \ess\ess    \ess\ess\ess\ess
+443990 q^{26}+421057 q^{25}+386980 q^{24}+342207 q^{23} +292689
q^{22}+241863 q^{21}+191794 q^{20} \cr &
 \ess\ess \ess\ess  \ess\ess\ess\ess +147423 q^{19}+108458 q^{18}
+77068 q^{17}+52205 q^{16}+34323 q^{15}+21198 q^{14}+12688 q^{13}
\cr
&
 \ess\ess\ess\ess \ess\ess\ess\ess \ess\ess
 +7263 q^{12} +3794 q^{11}+2042 q^{10}+915 q^{9}
 +465 q^{8} +154 q^{7}+98 q^{6}+9 q^{5}+16 q^{4}+q^{2}+1
.
\end{align*}
It should be apparent from the size of the numerators of $W_5(q)$
and $G_5(q)$  that the problem of computing these rational functions
explodes beyond $k=4$. In fact it develops that all available
computer packages (including {\ita Omega}  and {\ita Latte}) fail to
directly compute the constant terms in \eqref{I.12}  for $k=5$. This
notwithstanding, we were eventually able to get the partial fraction
algorithm of Xin \cite{9} to deliver us $G_5(q)$.

This paper covers the variety of techniques we developed in  our
efforts to compute these remarkable rational functions. Our efforts
in obtaining $W_6(q)$ and $G_6(q)$ are still in progress, so far
they only resulted in reducing
 the computer time required to obtain $W_5(q)$ and $G_5(q)$.
Using  combinatorial ideas, group actions, in conjunction with the
partial fraction algorithm of  Xin, we developed three essentially
distinct algorithms for computing these rational functions as well
as  other closely related families. Our most successful algorithm
reduces the computation time for $W_5(q)$ down to about five
minutes. The crucial feature of this algorithm  is an inductive
process for successively computing the series $G_k(q)$ and $W_k(q)$,
based on  a surprising role of divided differences.

This paper is the extended version of \cite{new1}. We organize the
contents in 5 sections. Section 1 is this introduction. In Section
\ref{sec-1} we relate these Hilbert series to constant terms and
derive a collection of identities to be used in later sections. In
Section \ref{sec-2} we develop the combinatorial model that reduces
the computation of our Kronecker products to solutions of
Diophantine systems. In Section \ref{sec-3} we develop the divided
difference algorithm for the computation of the complete generating
functions yielding $W_k(q)$ and $G_k(q)$. In Section \ref{sec-4},
after an illustration of what can be done with bare hands we expand
the combinatorial ideas acquired from this experimentation into our
three algorithms that yielded  $G_5(q)$ and our fastest computation
of $W_5(q)$.

The readers are referred to the papers of Luque-Thibon
\cite{4},\cite{5} and Wallach \cite{7} for an understanding of how
these Hilbert series are related to problem arising in the study of
quantum computing.

\section{\label{sec-1}Hilbert series of invariants as constant terms}

 Let us recall that given two
matrices $A=[a_{ij}]_{i,j=1}^m$ and $B=[b_{ij}]_{i,j=1}^n$ we
 use the notation $A\otimes B$ to denote
the $nm\times nm$ block matrix $ A\otimes B= [a_{ij}B]_{i,j=1}^m\,
.$ For instance, if $m=n=2$,
then
$$
A\otimes  B=\left[\mymatrix{ a_{11} b_{11} &  a_{11} b_{12} & a_{12}
b_{11} &  a_{12} b_{12}  \cr a_{11} b_{21} &  a_{11} b_{22} & a_{12}
b_{21} &  a_{12} b_{22}  \cr a_{21} b_{11} &  a_{21} b_{12} & a_{22}
b_{11} &  a_{22} b_{12}  \cr a_{21} b_{21} &  a_{21} b_{22} & a_{22}
b_{21} &  a_{22} b_{22}  \cr }\right].
$$
Here and in the following, we define $T_A P(x)$ to be the action of
an $m\times m$ matrix $A=[a_{ij}]_{i,j=1}^m$ on a polynomial
$P(x)=P(x_1,x_2,\ldots ,x_m)$ in $ \BR_m:=\C[x_1,x_2,\ldots ,x_m] $
by
\begin{align}
T_A P(x_1,x_2,\ldots ,x_m)= P\Big(\sum_{i=1}^m
x_ia_{i1}\scs\sum_{i=1}^m x_ia_{i2}\scs\ldots \scs \sum_{i=1}^m
x_ia_{im}\Big). \label{1.4}
\end{align}
In matrix notation (viewing $x=(x_1,x_2,\ldots ,x_m)$ as a row
vector) we may simply rewrite this as
\begin{align*}
T_A P(x)= P\big(xA\big). 
\end{align*}
Recall that if $G$ is a group of  $m\times m$ matrices we say that
$P$ is \emph{ $G$-invariant} if and only if
\begin{align*}
 T_A P(x)= P(x)\ess\ess\ess\ess \forall\ess\ess\ess A\in G.
\end{align*}
The subspace of $ \BR_m$ of $G$-invariant polynomials is usually
denoted $ \BR_m^G$. Clearly, the action in \eqref{1.4} preserves
homogeneity and degree. Thus we have the direct sum decomposition
$$
 \BR_m^G= \CH_o\big(\BR_m^G\big)\oplus \CH_1\big(\BR_m^G\big)\oplus \CH_2\big(\BR_m^G\big)\oplus\cdots \oplus\CH_d\big(\BR_m^G\big)\oplus\cdots
$$
where $\CH_d\big(\BR_m^G\big)$ denotes the subspace of
$G$-invariants that are homogeneous of degree $d$. The \emph{
Hilbert series} of $\BR_m^G $ is simply given by the formal power
series
\begin{align*}
 F_G(q)= \sum_{d\ge 0} q^d \dim\Big(\CH_d\big(\BR_m^G\big)\Big).
\end{align*}
This is a well defined formal power series since $\dim
\CH_d\big(\BR_m^G\big)\le \dim\Big(\CH_d\big(\BR_m\big)\Big)=
{d+m-1\choose m-1}$.

When $G$ is a finite group the Hilbert series $ F_G(q)$ is
immediately obtained from Molien's formula
$$
F_G(q)= {1\over |G|}\sum_{A\in G}{1\over \det\big( I- qA\big)}.
$$
For an infinite group $G$ which possess a  unit invariant measure
$\om$ this identity becomes
\begin{align}
F_G(q)=  \int_{A\in G}{1\over \det\big( I- qA\big)}\, d\om.
\label{1.8}
\end{align}
For the present developments  we need to specialize all this to the
case $G=SL[2]^{\otimes k}$, that is  the group of $2^k\times 2^k$
matrices obtained by tensoring a $k$-{tuple} of elements of $
SL[2]$. More precisely
\begin{align}
 SL[2]^{\ot k}= \big\{ A_1\otimes A_2\otimes \cdots \otimes A_k  \, :\,   A_i\in SL[2]\ess\ess
\forall \ess\ess i=1,2,\ldots ,k \big\}. \label{1.9}
\end{align}

Our first task in this section is to derive the identity in
\eqref{I.13}. That is

\begin{Theorem}
\label{t-1.1} Setting for $k\ge 1$
\begin{align}
 W_k(q)=  F_{ SL[2]^{\ot k}}(q)= \sum_{d\ge 0} q^d
\dim\Big(\CH_d\big(\BR_{2^k}^{SL[2]^{\ot k}}\big)\Big), \label{1.10}
\end{align}
we have
\begin{align}
 W_k(q)={{ \prod_{i=1}^k}\big(1-{ a_i^2}\big)\over
{\displaystyle \prod_{S\con [1,k]}}\Big(1-q \prod _{i\in S }
a_i/{\prod _{j\not\in S }}a_j\Big)}\bigg |_{a_1^0a_2^0\cdots a_k^0}.
\label{1.11}
\end{align}
\end{Theorem}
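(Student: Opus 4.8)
The plan is to compute the Hilbert series $W_k(q)$ via the integral form of Molien's formula \eqref{1.8} applied to $G = SL[2]^{\ot k}$, and then reduce the resulting Lie-group integral to an iterated constant-term extraction. The first step is to reduce the integral over $SL[2]^{\ot k}$ to an integral over the maximal compact subgroup and then, by Weyl integration, to an integral over a maximal torus. Concretely, using unitary tricks, $W_k(q)$ equals the integral of $1/\det(I - qA)$ over $SU(2)^k$ with respect to normalized Haar measure; since $1/\det(I-qA)$ is a class function (it depends only on the eigenvalues of $A_1 \otimes \cdots \otimes A_k$, hence only on the conjugacy classes of the $A_i$), Weyl's integration formula lets me replace the integral over each $SU(2)$ factor by an integral over the circle, with the Weyl measure
\begin{align*}
\frac{1}{2}\big|1 - a_i^2\big|^2 \, \frac{da_i}{2\pi i\, a_i}
\end{align*}
where $a_i$ ranges over the unit circle (here $a_i, a_i^{-1}$ are the eigenvalues of the $i$-th torus element, and the $\tfrac12$ accounts for the Weyl group $\mathbb Z/2$).

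The second step is to write the eigenvalues of the tensor product explicitly. If $A_i$ has eigenvalues $a_i$ and $a_i^{-1}$, then $A_1 \otimes \cdots \otimes A_k$ has the $2^k$ eigenvalues $\prod_{i \in S} a_i \cdot \prod_{j \notin S} a_j^{-1}$, one for each subset $S \subseteq [1,k]$. Hence
\begin{align*}
\frac{1}{\det(I - qA_1\otimes\cdots\otimes A_k)} = \prod_{S \subseteq [1,k]} \frac{1}{1 - q\,\prod_{i\in S} a_i \big/ \prod_{j\notin S} a_j}\, ,
\end{align*}
which is exactly the denominator appearing in \eqref{1.11}. Combining with the Weyl measure from the first step gives
\begin{align*}
W_k(q) = \oint \cdots \oint \frac{\prod_{i=1}^k \tfrac12 |1-a_i^2|^2}{\prod_{S\subseteq[1,k]}\big(1 - q\prod_{i\in S}a_i/\prod_{j\notin S}a_j\big)} \; \prod_{i=1}^k \frac{da_i}{2\pi i\, a_i}\, .
\end{align*}
The third step is to convert these circle integrals into constant-term operators. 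On the unit circle, $\overline{a_i} = a_i^{-1}$, so $|1-a_i^2|^2 = (1-a_i^2)(1-a_i^{-2})$; by the symmetry $a_i \mapsto a_i^{-1}$ of the whole integrand one can replace $\tfrac12\big[(1-a_i^2)+(1-a_i^{-2})\big]$-type averaging and check that only the factor $\prod_i(1-a_i^2)$ survives the constant-term extraction in the form stated — i.e. $\oint f(a)\, \tfrac{da}{2\pi i a} = \mathrm{CT}_a f$, so the integral becomes precisely the constant term in $a_1,\dots,a_k$ of the rational function in \eqref{1.11}. One must also justify that, after expanding $1/(1-q\cdots)$ as a geometric series in $q$, each coefficient is a Laurent polynomial whose constant term is well-defined, and that interchanging the constant-term operator with the sum over $d$ is legitimate (it is, degree by degree in $q$).

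The main obstacle, and the step requiring the most care, is the passage from the analytic integral to the formal constant term: one has to argue that the $\tfrac12 (1-a_i^2)(1-a_i^{-2})$ Weyl factors can be collapsed to the single product $\prod_i(1-a_i^2)$ in \eqref{1.11}. This works because the denominator is invariant under simultaneously inverting any subset of the $a_i$ (inverting $a_\ell$ merely permutes the subsets $S$), so expanding $(1-a_\ell^2)(1-a_\ell^{-2}) = 2 - a_\ell^2 - a_\ell^{-2}$ and using the $a_\ell \leftrightarrow a_\ell^{-1}$ symmetry shows the three terms contribute equally in pairs up to the constant-term operation, leaving an effective factor that matches $(1-a_i^2)$ once the normalization $\tfrac12$ is absorbed — this bookkeeping, done one variable at a time, is the delicate part. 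I would also need to confirm the convergence/formal-power-series bookkeeping near $q=0$ so that $W_k(q)$ as a formal series in $q$ genuinely equals the stated constant term expanded as a series in $q$; this is routine since each $q$-coefficient involves only finitely many Laurent monomials.
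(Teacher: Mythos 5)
Your proposal is correct and follows essentially the same route as the paper: the unitary trick reducing $SL[2]^{\ot k}$ to $SU[2]^{\ot k}$, Molien's integral formula, Weyl integration on each $SU(2)$ factor (your measure $\tfrac12|1-a_i^2|^2\,\tfrac{da_i}{2\pi i a_i}$ is exactly the paper's $\tfrac1\pi\sin^2\ttt_i\,d\ttt_i$), the symmetry under $a_i\mapsto a_i^{-1}$ to collapse $1-\tfrac{a_i^2+a_i^{-2}}{2}$ to $1-a_i^2$, and finally Proposition \ref{p-1.1} to pass from torus integrals to constant terms. The only cosmetic difference is that the paper expands $1/\det(I-qA)$ coefficientwise as $\sum_m q^m h_m[\cdots]$ before extracting constant terms, whereas you keep the product over subsets $S$ intact and handle the formal-series bookkeeping at the end.
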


We need the following result.
\begin{Proposition} \label{p-1.1} If   $Q(a_1,a_2,\ldots ,a_k)$ is
a Laurent polynomial in $\C[a_1,a_2,\ldots ,a_k; 1/a_1,1/a_2,\ldots
,1/a_k]$ then
\begin{align}
 \left({1\over  2\pi}\right)^k\int_{-\pi}^\pi\cdots
\int_{-\pi}^\pi Q\left(e^{i\ttt_1}, e^{i\ttt_2},\ldots
,e^{i\ttt_k}\right)d\ttt_1d\ttt_2\cdots d\ttt_k = Q(a_1,a_2,\ldots
,a_k)\, \Big|_{a_1^0a_2^0\cdots a_k^0}. \label{1.16}
\end{align}
\end{Proposition}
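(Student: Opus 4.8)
The plan is to reduce the statement to a single Laurent monomial by linearity and then to a one-variable computation by Fubini. First I would write $Q(a_1,a_2,\ldots,a_k)=\sum_{(n_1,\ldots,n_k)} c_{n_1,\ldots,n_k}\, a_1^{n_1}a_2^{n_2}\cdots a_k^{n_k}$ as a \emph{finite} $\C$-linear combination of Laurent monomials, with $n_j\in\Z$. Since the sum is finite, both sides of \eqref{1.16} are $\C$-linear in $Q$, so it suffices to verify the identity when $Q$ is a single monomial $a_1^{n_1}a_2^{n_2}\cdots a_k^{n_k}$.

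For such a monomial the left-hand side of \eqref{1.16} factors as a product of one-dimensional integrals,
\begin{align*}
 \left(\frac{1}{2\pi}\right)^k\int_{-\pi}^\pi\cdots\int_{-\pi}^\pi e^{i(n_1\ttt_1+\cdots+n_k\ttt_k)}\,d\ttt_1\cdots d\ttt_k=\prod_{j=1}^k\left(\frac{1}{2\pi}\int_{-\pi}^\pi e^{in_j\ttt}\,d\ttt\right),
\end{align*}
the interchange being legitimate by Fubini's theorem since the integrand is continuous (a trigonometric polynomial) on the compact box $[-\pi,\pi]^k$. Next I would invoke the elementary orthogonality relation on the circle: for $n\in\Z$ one has $\frac{1}{2\pi}\int_{-\pi}^\pi e^{in\ttt}\,d\ttt=1$ if $n=0$ and $0$ if $n\neq 0$, which follows at once by evaluating the antiderivative $e^{in\ttt}/(in)$ when $n\neq 0$ and directly when $n=0$.

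Consequently the product above equals $1$ when $n_1=n_2=\cdots=n_k=0$ and $0$ otherwise; that is, the left-hand side of \eqref{1.16} applied to a monomial extracts precisely the coefficient of $a_1^0a_2^0\cdots a_k^0$ in that monomial, which is exactly the right-hand side. Summing back over the monomials of $Q$ against their coefficients $c_{n_1,\ldots,n_k}$ yields the general case. Alternatively, one may phrase the argument in one line: $\ttt\mapsto e^{i\ttt}$ parametrizes the unit circle and $\frac{1}{2\pi}\int_{-\pi}^\pi(\,\cdot\,)\,d\ttt$ is its normalized Haar measure, so \eqref{1.16} merely says that integration against Haar measure on the $k$-torus kills every nontrivial character and fixes the trivial one.

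There is essentially no serious obstacle here; the only points meriting a word of care are the passage of the finite sum through the integral (immediate from linearity) and the application of Fubini (unproblematic, as we integrate a continuous function over a compact product). This is a bookkeeping lemma whose sole purpose is to license rewriting the Molien-type integral \eqref{1.8}, specialized to $G=SL[2]^{\ot k}$, as the constant-term expression \eqref{1.11} in the proof of Theorem \ref{t-1.1}.
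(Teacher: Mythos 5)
Your proof is correct and follows exactly the route the paper takes: reduce by linearity to a single Laurent monomial and observe that the iterated integral is $1$ or $0$ according as all exponents vanish (the paper compresses the monomial case to "obviously holds," which you have simply spelled out via Fubini and the orthogonality relation).
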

\begin{proof}

 By multilinearity, it suffices to consider
$Q(a_1,a_2,\ldots,a_k)=a_1^{r_1}a_2^{r_2}\cdots a_k^{r_k}$, in which
case \eqref{1.16} obviously holds.
\end{proof}

\begin{proof}[Proof of Theorem \ref{t-1.1}]

To keep our exposition within reasonable limits we will need  to
assume here some well known facts (see \cite{7} for proofs). Since $
SL[2]$ has no finite measure the first step is to note that a
polynomial \hbox{$P(x)\in \C[x_1,x_2,\ldots ,x_{2^k}]$}
 is $ SL[2]^{\ot k}$-invariant if and only if it is  $SU[2]^{\ot k}$-invariant, where $SU[2]:=SU(2,\C)$ and as in
 \eqref{1.9}
$$
 SU[2]^{\ot k}= \big\{ A_1\ot A_2\ot \cdots \ot A_k  \, :\, A_i\in SU[2]\ess\ess  \forall \ess\ess i=1,2,\ldots ,k
 \big\}.
$$
In particular we derive that $
 F_{ SL[2]^{\ot k}}(q)=  F_{ SU[2]^{\ot k}}(q).
$
This fact allows us to compute $F_{ SL[2]^{\ot k}}(q)$ using
Molien's identity \eqref{1.8}. Note however that if
$$
A=  A_1\ot A_2\ot \cdots \ot A_k
$$
and $A_i$ has eigenvalues $\ssp t_i,1/t_i\ssp $ then (using plethistic notation) we have
\begin{align*}
 {1\over \det\big(I-qA)}= \sum_{m\ge 0}q^m
h_m\big[(t_1+1/t_1)(t_2+1/t_2)\cdots (t_k+1/t_k)\big]. 
\end{align*}
Denoting by $d\om_i$ the invariant measure of the $i^{\mathrm{th} }$
copy of $SU[2]$ we see that \eqref{1.8} reduces to
\begin{align}
 F_{ SU[2]^{\ot k}}(q)= \sum_{m\ge 0}q^m\int_{  SU[2]}\cdots \int_{  SU[2]}
 h_m\big[(t_1+1/t_1) \cdots (t_k+1/t_k)\big] d\om_1  \cdots d\om_k.
\label{1.14}
\end{align}
Now it is well know that if an integrand $f(A)$ of $SU[2]$ is
invariant under conjugation then
$$
\int_{  SU[2]}f(A)d\om= {1\over \pi}\int_{-\pi}^\pi f\Big(
\left[\mymatrix{e^{i\ttt}& 0 \cr 0 &e^{-i\ttt}}\right] \Big) \sin^2
\ttt d\ttt.
$$
This identity converts the right-hand side of \eqref{1.14} to
\begin{align}
\sum_{m\ge 0}q^m {1\over \pi^k} \int_{-\pi}^\pi\!\! \cdots
\int_{-\pi}^\pi
 h_m\big[(e^{i\ttt_1}+e^{-i\ttt_1}) \cdots (e^{i\ttt_k}+e^{-i\ttt_k})\big] \sin^2 \ttt_1 \cdots \sin^2 \ttt_k\,\,  d\ttt_1  \cdots
 d\ttt_k.
\label{1.15}
\end{align}


%
%
%
\sas

The substitution
$$
\sin^2\ttt_j={1-{e^{2i\ttt_j}+e^{-2i\ttt_j}\over 2}\over 2}
$$
reduces the coefficient of $q^m$ to
\begin{align}
 \ess {1\over (2\pi)^k} \int_{-\pi}^\pi \cdots \int_{-\pi}^\pi
 h_m\big[(e^{i\ttt_1}+e^{-i\ttt_1}) \cdots (e^{ i\ttt_k}+e^{ - i\ttt_k})\big]
\prod_{i=1}^k\Big( 1-{e^{2i\ttt_j}+e^{-2i\ttt_j}\over 2} \Big)\,\,
 d\ttt_1  \cdots d\ttt_k.
\label{1.17}
\end{align}
However the factor $
 h_m\big[(e^{i\ttt_1}+e^{-i\ttt_1}) \cdots (e^{ i\ttt_k}+e^{ - i\ttt_k})\big]
$
is invariant under any of the interchanges
$e^{i\ttt_j}\longleftrightarrow e^{-i\ttt_j}$. Thus the integral in
\eqref{1.17} may be simplified to
$$
\ess {1\over (2\pi)^k}
\int_{-\pi}^\pi \cdots \int_{-\pi}^\pi
 h_m\big[(e^{i\ttt_1}+e^{-i\ttt_1}) \cdots (e^{ i\ttt_k}+e^{ - i\ttt_k})\big]
\prod_{i=1}^k\Big( 1- e^{2i\ttt_j} \Big)\,\,
 d\ttt_1  \cdots d\ttt_k.
$$
Proposition \ref{p-1.1} then yields that this integral may be
computed as the constant term
$$
h_m\big[(a_1+1/a_1)(a_2+1/a_2)\cdots (a_k+1/a_k)\big
]\prod_{i=1}^k\Big( 1- a_i^2  \Big)\,\, \bigg|_{a_1^0a_2^0\cdots
a_k^0}.
$$
Using this in \eqref{1.15} we derive that
\begin{align*}
 F_{ SU[2]^{\ot k}}(q) &=  \sum_{m\ge 0}q^m
h_m\big[(a_1+1/a_1)(a_2+1/a_2)\cdots (a_k+1/a_k)\big
]\prod_{i=1}^k\Big( 1- a_i^2  \Big)\,\,\bigg|_{a_1^0a_2^0\cdots
a_k^0} \cr & = \sum_{m\ge 0}q^m
h_m\bigg[\sum_{S\con[1,k]}{\prod_{i\in S} a_i\over \prod_{j\not\in
S} a_j  }\bigg ]\prod_{i=1}^k\Big( 1- a_i^2
\Big)\,\,\bigg|_{a_1^0a_2^0\cdots a_k^0} \cr &=
\Bigg(\prod_{S\con[1,k]}{1\over \Big( 1-q\,{ \prod_{i\in S}
a_i\over\prod_{j\not\in S} a_j } \Big)} \Bigg) \prod_{i=1}^k\Big( 1-
a_i^2  \Big)\,\,\bigg|_{a_1^0a_2^0\cdots a_k^0}.
\end{align*}
This completes the proof of Theorem \ref{t-1.1}.
\end{proof}

Note that if we restrict our action of $SU[2]^{\otimes k}$ to the subgroup of matrices
$$
T_2^{\otimes k}= \left\{\left[\mymatrix{t_1  & 0\cr 0 & {\overline
t_1}\cr}\right]\otimes \left[\mymatrix{t_2  & 0\cr 0 & {\overline
t_2}\cr}\right]\otimes\cdots \otimes \left[\mymatrix{t_k & 0\cr 0 &
{\overline t_k}\cr}\right]\ssp :\ssp t_r=e^{i\ttt_r} \right\}
$$
then a similar use of  Molien's theorem yields the following
result.\sas

\begin{Theorem}
\label{1.2}

The Hilbert series of the ring of invariants
$\BR_{2^k}^{T_2^{\otimes k}}$ is given by the constant term
\begin{align}
 F_{T_2^{\otimes k}}(q)= { 1\over {\displaystyle \prod_{S\con
[1,k]}}\Big(1-q \prod _{i\in S } a_i/{\prod _{j\not\in S
}}a_j\Big)}\bigg |_{a_1^0a_2^0\cdots a_k^0}. \label{1.19}
\end{align}
\end{Theorem}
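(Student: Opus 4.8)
The plan is to mimic, almost verbatim, the proof of Theorem \ref{t-1.1}, simply deleting the factor $\sin^2\ttt_j$ that arose there from the Haar measure on $SU[2]$ and replacing it by the flat measure ${1\over 2\pi}d\ttt_j$ on the torus $T_2$. First I would invoke Molien's formula in its integral form \eqref{1.8} for the compact abelian group $T_2^{\otimes k}$, whose normalized invariant measure is the product of the uniform measures $d\om_r={1\over 2\pi}d\ttt_r$ on each circle. A diagonal element of $T_2^{\otimes k}$ is the tensor product $\mathrm{diag}(t_1,\bar t_1)\otimes\cdots\otimes\mathrm{diag}(t_k,\bar t_k)$ with $t_r=e^{i\ttt_r}$, and its $2^k$ eigenvalues are exactly the monomials $\prod_{i\in S}t_i\big/\prod_{j\notin S}t_j$ as $S$ ranges over the subsets of $[1,k]$; this is the same eigenvalue computation already used in the proof of Theorem \ref{t-1.1}.

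Next I would expand $1/\det(I-qA)$ as $\sum_{m\ge 0}q^m h_m$ of the eigenvalues, exactly as in the displayed identity in the proof of Theorem \ref{t-1.1}, obtaining
\begin{align*}
 F_{T_2^{\otimes k}}(q)= \sum_{m\ge 0}q^m\,{1\over (2\pi)^k}\int_{-\pi}^\pi\!\!\cdots\int_{-\pi}^\pi h_m\big[(e^{i\ttt_1}+e^{-i\ttt_1})\cdots(e^{i\ttt_k}+e^{-i\ttt_k})\big]\,d\ttt_1\cdots d\ttt_k.
\end{align*}
Here $h_m\big[(t_1+1/t_1)\cdots(t_k+1/t_k)\big]$ is a Laurent polynomial in the $e^{i\ttt_r}$, so Proposition \ref{p-1.1} applies directly and converts each coefficient of $q^m$ into the constant term
\begin{align*}
 h_m\big[(a_1+1/a_1)(a_2+1/a_2)\cdots(a_k+1/a_k)\big]\,\Big|_{a_1^0a_2^0\cdots a_k^0}.
\end{align*}

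Finally I would use the same plethystic rewriting $(a_1+1/a_1)\cdots(a_k+1/a_k)=\sum_{S\con[1,k]}\prod_{i\in S}a_i\big/\prod_{j\notin S}a_j$ and the generating function identity $\sum_{m\ge 0}q^m h_m[X]=\prod(1-q\,x)^{-1}$ over the monomials $x$ appearing in $X$, to collapse the sum over $m$ into
\begin{align*}
 F_{T_2^{\otimes k}}(q)=\Bigg(\prod_{S\con[1,k]}{1\over 1-q\,\prod_{i\in S}a_i/\prod_{j\notin S}a_j}\Bigg)\bigg|_{a_1^0a_2^0\cdots a_k^0},
\end{align*}
which is \eqref{1.19}. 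I do not anticipate a genuine obstacle: the only points that need a line of justification are that the Haar measure on the product torus really is the product of uniform circle measures (immediate) and that the eigenvalue list of the tensor product is as claimed (the multiplicativity of eigenvalues under $\otimes$, already used above); everything else is a specialization of the argument given for Theorem \ref{t-1.1}. If one wants to be careful about convergence, one notes as in Section \ref{sec-1} that $\dim\CH_d\le\binom{d+2^k-1}{2^k-1}$, so all the series are well-defined formal power series in $q$ and the interchange of $\sum_m$ with the integral is harmless.
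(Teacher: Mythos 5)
Your proposal is correct and follows essentially the same route as the paper: Molien's formula with the flat Haar measure $d\ttt_1\cdots d\ttt_k/(2\pi)^k$ on the torus, the same eigenvalue computation as in Theorem \ref{t-1.1}, and Proposition \ref{p-1.1} to convert the integral into a constant term. The only (harmless) difference is that you make explicit the expansion in powers of $q$ before applying Proposition \ref{p-1.1}, a step the paper leaves implicit.
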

\begin{proof}
 The integrand $1/\det(1-qA)$ is the same as in the previous proof
and only  the Haar measure changes. In this case we must take $dw=
d\ttt_1 d\ttt_2\cdots d\ttt_k/(2\pi)^k$ in \eqref{1.8}, and Molien's
theorem  gives
$$
F_{T_2^{\otimes k}}(q)={1\over (2\pi)^k}\int_{-\pi}^{\pi} \cdots
\int_{-\pi}^{\pi} { 1\over {\displaystyle \prod_{S\con
[1,k]}}\Big(1-q \prod _{i\in S } t_i/{\prod _{j\not\in S }}t_j\Big)}
d\ttt_1 d\ttt_2\cdots d\ttt_k.
$$
Thus \eqref{1.19} follows from Proposition \ref{p-1.1}.
\end{proof}

\begin{Remark}\label{rem-1.2}
There is another path leading to the same result that is worth
mentioning here since it gives a direct way of connecting Invariants
to Diophantine systems. For notational simplicity we will deal with
the case $k=3$. Note that the element
$$
 \left[
\mymatrix{ t_1  & 0\cr
 0 & {\overline t_1}\cr
}\right]\otimes
 \left[
\mymatrix{ t_2  & 0\cr
 0 & {\overline t_2}\cr
}\right]\otimes
 \left[
\mymatrix{ t_3  & 0\cr
 0 & {\overline t_3}\cr
}\right] \in  {T_2^{\otimes 3}}
$$
is none other than the $8\times 8$ diagonal matrix
$$
A(t_1,t_2,t_3)= \left[\mymatrix{ t_1t_2t_3 &  0 &  0 &  0 &  0 &
0 &  0 &  0 &  \cr 0 &  t_1t_2/t_3 &  0 &  0 &  0 &  0 &  0 &  0 &
\cr 0 &  0 &  t_1t_3/t_2 &  0 &  0 &  0 &  0 &  0 &  \cr 0 &  0 &  0
& t_1/t_2t_3 &  0 &  0 &  0 &  0 &  \cr 0 &  0 &  0 &  0 &
t_2t_3/t_1 &  0 &  0 &  0 &  \cr 0 &  0 &  0 &  0 & 0 & t_2/t_1t_3 &
0 &  0 &  \cr 0 &  0 &  0 &  0 &  0 &  0 &t_3/ t_1t_2 &  0 &  \cr 0
&  0 &  0 &  0 &  0 &  0 &  0 &  1/t_1t_2t_3 &  \cr }\right].
$$
This gives that for any monomial $x^p=x_1^{p_1}x_2^{p_2}\cdots x_8^{p_8}$ we have
$$
A(t_1,t_2,t_3)\, x^p= t_1^{p_1+p_2+p_3+p_4-p_5-p_6-p_7-p_8} \,
t_2^{p_1+p_2-p_3-p_4+p_5+p_6-p_7-p_8} \,
t_3^{p_1-p_2+p_3-p_4+p_5-p_6+p_7-p_8} \times x^p.
$$
Thus all the monomials are eigenvectors and a polynomial
$P(x_1,x_2,\ldots ,x_8)$ will be invariant if and only if  all its
monomials are eigenvectors of eigenvalue 1. It then follows that the
Hilbert series $F_{T_2^{\otimes 3}}(q)$ of $\C[x_1,x_2,\ldots
,x_8]^{T_2^{\otimes 3}}$ is obtained by $q$-counting these monomials
by total degree. That is $q$-counting by the statistic
$p_1+p_2+p_3+p_4+p_5+p_6+p_7+p_8$ the solutions of the Diophantine
system
\begin{align}
 \CS_3= \left\|\  \mymatrix{ p_1+p_2+p_3+p_4-p_5-p_6-p_7-p_8=0\cr
p_1+p_2-p_3-p_4+p_5+p_6-p_7-p_8=0\cr
p_1-p_2+p_3-p_4+p_5-p_6+p_7-p_8=0\cr }\right. \label{1.20}
\end{align}
and MacMahon partition analysis gives
$$
\textstyle F_{T_2^{\otimes 3}}(q)= {1\over 1- q a_1a_2a_3} {1\over
1- q a_1a_2/a_3} {1\over 1- q a_1a_3/a_2} {1\over 1- q a_1/a_2a_3}
{1\over 1- q a_2a_3/a_1} {1\over 1- q a_2/a_1a_3} {1\over 1- q a_3/
a_1a_2} {1\over 1- q a/a_1a_2a_3} \Big|_{a_1^0a_2^0a_3^0}.
$$
This gives another proof of the case $k=3$ of \eqref{1.19}. It is
also clear that the same argument can be used for all $k>3$ as well.
\end{Remark}

\begin{Remark}
\label{rem-1.3} Full information about the solutions of our systems
is given  by the complete generating function
\begin{align}
 F_k(x_1,x_2,\ldots ,x_{2^k})= \sum_{p\in\CS_k}
x_1^{p_1}x_2^{p_2}\cdots x_{2^k}^{p_{2^k}}. \label{1.21}
\end{align}
Using the notation adopted for $\CS_3$ in \eqref{1.20}, our system
$\CS_k$ may be written in vector form
$$
p_1V_1\sps  p_2V_2 \sps \cdots \sps  p_{2^k}V_{2^k}= 0,
$$
where $V_1,V_2,\ldots ,V_{2^k}$ are the $k$-vectors $(\pm 1,\pm 1,
\ldots, \pm 1) $ yielding the vertices of the hypercube of semiside
$1$ centered at the origin. In this notation,
 MacMahon partition analysis gives that the rational function in \eqref{1.21} is obtained by
taking the constant term
$$
F_k(x_1,x_2,\ldots ,x_{2^k})= \prod_{i=1}^{2^k}{1\over
1-x_iA_i}\bigg|_{a_1^0 a_2^0\cdots a_k^0}
$$
with the $A_i$ Laurent monomials in $a_1,a_2,\ldots a_k$ which may be written in the form
$$
A_i = \prod_{i=1}^k a_i^{1-2\eee_i} \eqno
$$
where $\eee_1\eee_2\cdots \eee_k$ are the binary digits of $i-1$.

In the same vein the companion rational  function $W(x_1,x_2,\ldots ,x_{2^k})$ associated to
the  Sdd problem is obtained by taking  the constant term
\begin{align}
\label{1.18}
 W_k(x_1,x_2,\ldots ,x_{2^k})= \prod_{j=1}^k(1-a_j^2)
\prod_{i=1}^{2^k}{1\over 1-x_iA_i}\bigg|_{a_1^0 a_2^0\cdots a_k^0}.
\end{align}
Of course we have
$$
G_k(q)= F_k(x_1,x_2,\ldots ,x_{2^k})\Big|_{x_i=q}  \ess\ess
\hbox{and} \ess\ess W_k(q)= W_k(x_1,x_2,\ldots
,x_{2^k})\Big|_{x_i=q}.
$$
In Section \ref{sec-3} we will show that, at least in principle,
these rational functions could be constructed by a succession of
elementary steps interspersed by single constant term extractions.
\end{Remark}

\section{\label{sec-2}Diophantine systems,  Constant terms and Kronecker
products}
 We have seen, by MacMahon
partition analysis, that the generating function $G_k(q)$ defined in
\eqref{I.10},
which counts solutions of the Diophantine system $\CS_k$, is given
by the constant term identity in \eqref{I.12}:
\begin{align}
 G_k(q)={1\over {\displaystyle \prod_{S\con [1,k]}} \Big(1-q
\prod _{i\in S }a_i/{\prod _{j\not\in S
}}a_j\Big)}\bigg|_{a_1^0a_2^0\cdots a_k^0}. \label{2.1}
\end{align}
In the last section we proved (in Theorem \ref{t-1.1}) that the
Hilbert series $W_k(q)$ of invariants in \eqref{1.10}
is given by the constant term
\begin{align}
W_k(q)={{ \prod_{i=1}^k}\big(1-{ a_i^2}\big)\over {\displaystyle
\prod_{S\con [1,k]}}\Big(1-q \prod _{i\in S } a_i/{\prod _{j\not\in
S }}a_j\Big)}\bigg |_{a_1^0a_2^0\cdots a_k^0}.\label{2.2}
\end{align}
A comparison of \eqref{2.1} and \eqref{2.2} clearly suggests that
these two results must be connected. This connection has a beautiful
combinatorial underpinning which leads to another interpretation of
the these remarkable constant terms.
  The idea is best explained  in the simplest case $k=2$. Then \eqref{2.2} reduces to
$$
W_2(q)= {1-a_1^2-a_2^2+a_1^2a_2^2\over (1-q a_1 a_2)(1-q
a_1/a_2)(1-q a_2/a_1)(1-q /a_1a_2)}\bigg |_{a_1^0a_2^0}.
$$
Expanding the inner rational function as product of four formal
power series in $q$ we get
\begin{align}
W_2(q)&= \sum_{p_{00}\ge 0}\sum_{p_{01}\ge 0}\sum_{p_{10}\ge
0}\sum_{p_{11}\ge 0}q^{p_{00}+p_{01}+p_{10}+p_{11}}
a_1^{p_{00}+p_{01}-p_{10}-p_{11}}a_2^{p_{00}-p_{01}+p_{10}-p_{11}}\ess
\bigg |_{a_1^0a_2^0} \cr &\qquad- \sum_{p_{00}\ge 0}\sum_{p_{01}\ge
0}\sum_{p_{10}\ge 0}\sum_{p_{11}\ge
0}q^{p_{00}+p_{01}+p_{10}+p_{11}}
a_1^{p_{00}+p_{01}-p_{10}-p_{11}+2}a_2^{p_{00}-p_{01}+p_{10}-p_{11}}\ess
\bigg |_{a_1^0a_2^0} \label{2.3}\\ &\qquad\quad - \sum_{p_{00}\ge
0}\sum_{p_{01}\ge 0}\sum_{p_{10}\ge 0}\sum_{p_{11}\ge
0}q^{p_{00}+p_{01}+p_{10}+p_{11}}
a_1^{p_{00}+p_{01}-p_{10}-p_{11}}a_2^{p_{00}-p_{01}+p_{10}-p_{11}+2}\ess
\bigg |_{a_1^0a_2^0} \cr &\qquad\qquad+ \sum_{p_{00}\ge
0}\sum_{p_{01}\ge 0}\sum_{p_{10}\ge 0}\sum_{p_{11}\ge
0}q^{p_{00}+p_{01}+p_{10+p_{11}}}
a_1^{p_{00}+p_{01}-p_{10}-p_{11}+2}a_2^{p_{00}-p_{01}+p_{10}-p_{11}+2}\ess
\bigg |_{a_1^0a_2^0}\nonumber
 .
\end{align}
Now by MacMahon partition analysis, the the $i^{\textrm{th}}$ term
counts solutions of the Diophantine system
\begin{align}
 \CS_2^i= \left\|\ {p_{00}+p_{01}-p_{10}-p_{11}= c_i \atop
p_{00}-p_{01}+p_{10}-p_{11}= d_i }\right.\ \  , \label{2.4}
\end{align}
where $(c_i,d_i)$ equals $(0,0),(-2,0),(0,-2),(-2,-2)$ for
$i=1,2,3,4$, respectively. Note that the first term of \eqref{2.3}
is none other than \eqref{2.1} for $k=2$.

Applying the same decomposition in the general case we see that the
series $W_k(q)$ may be viewed as the end product of an inclusion
exclusion process applied to a family of Diophantine systems. To
derive some further consequences of this fact, it is more convenient
to use another combinatorial model for these systems. In this
alternate model our family  of objects  consists of the collection
$\CF_d$ of $d$-subsets of the $2d$-element set
$$
\OM_{2d}=\{1,2,3,\ldots ,2d\}.
$$
For a given $ A=\{1\le i_1<i_2<\cdots <i_d\le 2d\}\in \CF_d$  and $\sig$ in the symmetric group $ S_{2d}$ we set
$$
\sig   A= \{\sig_{i_1},\sig_{i_2},\ldots ,\sig_{i_d}\}.
$$
This clearly defines an action of $ S_{2d}$ on $\CF_d$ as well as on the $k$-fold cartesian product
$$
 \CF_d^{k}= {\CF_d \times \CF_d \times \CF_d \times \cdots \times \CF_d
 }.
$$

\begin{Theorem}
\label{t-2.1} The number $m_d(k)$ of solutions of the Diophantine
system $\CS_k$ is equal to the number of orbits in the action of
$S_{2d}$ on $\CF_d^{ k}$.
\end{Theorem}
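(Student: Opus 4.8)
The plan is to exhibit an explicit bijection between the solution set of $\CS_k$ in nonnegative integers of total weight $2d$ and the set of orbits of $S_{2d}$ acting diagonally on $\CF_d^k$. First I would fix $d$ and recall from \eqref{I.12} (or Remark \ref{rem-1.2}) that $m_d(k)$ equals the number of monomials $x^p = x_1^{p_1}\cdots x_{2^k}^{p_{2^k}}$ of total degree $2d$ that are fixed by the torus $T_2^{\otimes k}$; equivalently, writing the $2^k$ exponents as $p_{\eee_1\cdots\eee_k}$ indexed by vertices of the hypercube, $m_d(k)$ counts the nonnegative integer solutions of the $k$ balance equations $\sum_{\eee} (-1)^{\eee_j} p_{\eee_1\cdots\eee_k} = 0$ ($j=1,\dots,k$) with $\sum_\eee p_\eee = 2d$.

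The key combinatorial observation I would build on is that a $k$-tuple $(A_1,\dots,A_k) \in \CF_d^k$ of $d$-subsets of $\OM_{2d}$ induces, for each element $r \in \OM_{2d}$, a binary string $\eee(r) = (\eee_1(r),\dots,\eee_k(r))$ where $\eee_j(r) = 1$ if $r \in A_j$ and $0$ otherwise; this partitions $\OM_{2d}$ into $2^k$ blocks indexed by the hypercube vertices, and the block sizes $p_\eee := \#\{r : \eee(r) = \eee\}$ form a composition of $2d$ into $2^k$ parts. The condition $\#A_j = d$ for each $j$ says precisely that for every coordinate $j$, the number of $r$ with $\eee_j(r)=1$ equals the number with $\eee_j(r)=0$, i.e. $\sum_\eee \eee_j p_\eee = d = \sum_\eee (1-\eee_j) p_\eee$, which is exactly the $j$th equation of $\CS_k$. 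So the map $(A_1,\dots,A_k) \mapsto (p_\eee)_\eee$ sends $\CF_d^k$ onto the solution set of $\CS_k$ of weight $2d$. Next I would check that two $k$-tuples give the same composition $(p_\eee)$ if and only if they lie in the same $S_{2d}$-orbit: the diagonal $S_{2d}$-action permutes the elements $r$ of $\OM_{2d}$ while preserving each string $\eee(r)$ up to relabeling, hence preserves the multiset of block sizes; conversely, if two tuples yield the same $(p_\eee)$, one can choose a permutation $\sigma \in S_{2d}$ carrying the $\eee$-block of the first tuple bijectively onto the $\eee$-block of the second for every $\eee$, and this $\sigma$ conjugates one tuple to the other. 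Therefore the fibers of the surjection are exactly the $S_{2d}$-orbits, giving the claimed equality.

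The step I expect to require the most care is the ``conversely'' direction of the orbit statement, i.e. verifying that the block-size data $(p_\eee)$ is a \emph{complete} invariant of the $S_{2d}$-action — one must confirm that reconstructing $\sigma$ block-by-block really does produce a single well-defined permutation of $\OM_{2d}$ and that it intertwines the two $k$-tuples simultaneously in all $k$ coordinates, not just one at a time. This is a bookkeeping argument rather than a deep one: since the blocks partition $\OM_{2d}$ and $\sigma$ is assembled as a disjoint union of bijections between blocks of equal size, it is automatically a permutation, and because membership of $r$ in $A_j$ depends only on which block $r$ lies in, the single $\sigma$ works for all $j$. I would also note in passing that the total-weight grading is respected automatically, since $\sum_\eee p_\eee = \#\OM_{2d} = 2d$ for every tuple, which is why summing over $d$ and matching coefficients of $q^{2d}$ recovers $G_k(q)$ as in \eqref{I.10}.
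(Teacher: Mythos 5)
Your proposal is correct and follows essentially the same route as the paper: both encode a $k$-tuple $(A_1,\dots,A_k)$ by the cardinalities $p_\eee$ of the $2^k$ intersection blocks (the paper draws this as a Venn diagram), identify the conditions $|A_j|=d$ with the equations of $\CS_k$ together with $\sum_\eee p_\eee=2d$, and observe that $(p_\eee)_\eee$ is a complete invariant of the diagonal $S_{2d}$-action. The only point to make explicit is surjectivity onto the solution set (any admissible $(p_\eee)$ is realized by filling the blocks with the prescribed numbers of elements of $\OM_{2d}$), which the paper notes and which is immediate.
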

\begin{proof}

 It will be  sufficient to see this for $k=2$.  Then leaving $d$ generic  we can
 visualize an element of $\Fd \times \Fd$ by the  Ven diagram of Figure
 \ref{fig-Fd2}.
 There we have depicted  the pair $(A_1,A_2)$ as it lies in $\OM_{2d}$.
 Using these two sets we can decompose  $\OM_{2d}$ into $4$ parts
 labeled by
 $ A_{00},A_{01}, A_{10}, A_{11}
 $.
More precisely ``$ A_{ 00}$'' labels the set $A_1\cap A_2$, ``$
A_{01}$'' labels the set $A_1\cap\; ^c\!A_2$, ``$  A_{10}$'' labels
the set $^c\! A_1\cap A_2$ and  ``$  A_{11}$'' labels the set
$^c\!A_1\cap\; ^c\!A_2$. Here  we use ``$^c\!A_i\, $'' to denote the
complement of  $ A_i$ in $ \OM_{2d}$.
 This given, if we let $p_{00},p_{01},p_{10},p_{11}$
denote the respective cardinalities   of these sets,  the
 condition that the pair $(A_1,A_2)$  belongs to $\Fd\times \Fd$ yields that
we must have
$$
\eqalign { &p_{00}+p_{01}+p_{10}+p_{11} = 2d\cr &p_{00}+p_{01} =
|A_1|=   d\cr & p_{00}+p_{10} = |A_2|=  d\cr }\ \ \ .
$$
Note that this system of equations is equivalent to the system
$$
\eqalign { &p_{00}+p_{01}+p_{10}+p_{11} = 2d\cr &p_{00}+p_{01} -
p_{10}-p_{11}=   0\cr & p_{00} -  p_{01}+p_{10 }-p_{11 }  = 0\cr }\
\ \ .
$$

\begin{figure}[hbt]
\includegraphics[width=5cm]{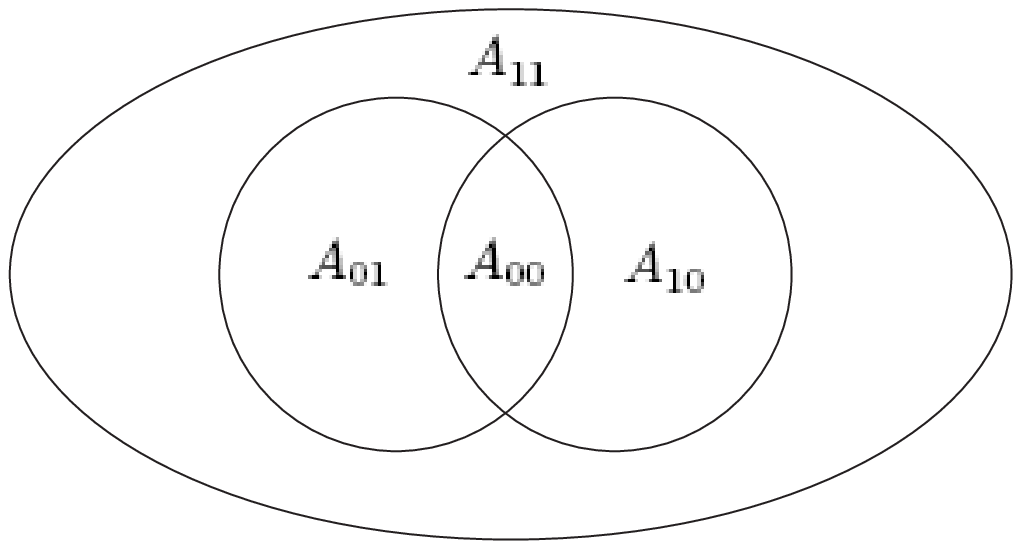}
\caption{The Ven diagram for $\Fd^{ 2}$.\label{fig-Fd2}}
\end{figure}

It is easily seen that for any solution
$(p_{00},p_{01},p_{10},p_{11} )$ of this system,  we can immediately
construct a pair of subsets $(A_1,A_2)\in\Fd\times \Fd$ by simply
filling the sets $ A_{00},A_{01},A_{10},A_{11}$ in the diagram of
Figure \ref{fig-Fd2}  with $p_{00},p_{01},p_{10},p_{11}$ respective
elements from the set $\OM_{2d}$. Moreover, any two such fillings
can be seen to be images of each other under suitable permutations
of $S_{2d}$. In other words by this construction we obtain a
bijection between the orbits of $\Fd\times \Fd$ under $\CS_{2d}$ and
the  solutions of the system $\CS_2$ we have previously encountered.
This proves the theorem for $k=2$. The general case follows by an
entirely analogous argument.
\end{proof}

Now we are ready to prove Theorem \ref{t-I.2} and then Theorem
\ref{t-I.1}.
\begin{proof}[Proof of Theorem \ref{t-I.2}]
We are to show that
\begin{align}
m_k(d)= \LL h_{d,d} \odot h_{d,d} \odot\cdots \odot h_{d,d} \scs
s_{2d}\RR. \label{2.8}
\end{align}
It is well known that
 a transitive  action of a group $G$ on a set $\OM$ is equivalent to the action of $G$ on the left $G$-cosets
of the stabilizer of any element of $\OM$. In our case, pick the
subset $[1,d]$ of $\OM_{2d}$. Then the stabilizer is the Young
subgroup $S_{[1,d]}\times S_{[d+1,2d]}$ of $S_{2d}$ and thus the
Frobenius characteristic of this action is the homogeneous basis
element $h_{d,d}=h_d h_d$.  It follows then that the Frobenius
characteristic of the action of $S_{2d}$ on the $k$-tuples
$(A_1,A_2,\ldots ,A_k)$ of  $d$-subsets of $\OM_{2d}$ is given by
the $k$-fold Kronecker product $
 h_{d,d} \odot h_{d,d} \odot\cdots \odot h_{d,d}.
$ Therefore the scalar product
$$
\LL h_{d,d} \odot h_{d,d} \odot\cdots \odot h_{d,d} \scs s_{2d}\RR
$$
yields the multiplicity of the trivial under this action. But it is
well known, and easy to see that this multiplicity is also equal to
the number of orbits under this action. Thus \eqref{2.8} follows by
Theorem \ref{t-2.1}. \end{proof}

\begin{proof}[Proof of Theorem \ref{t-I.1}]
Again we will only need to do it for $k=2$. To this end note that by
Theorem \ref{t-I.2} the number of solutions of the system $\CS_2^1$
in \eqref{2.4} is given by the scalar product
\begin{align}
\LL h_{d,d}\odot h_{d,d}\scs s_{2d}\RR. \label{2.9}
\end{align}
In the same vein we see that the number of solutions to the system
$\CS_2^2$ in \eqref{2.4} may be viewed as the number of orbits in
the action of $S_{2d}$ on the pairs of subsets $(A_1,A_2)$ of
$\OM_{2d}$ where $|A_2|=|^cA_2|$ and $|A_1|=|^cA_1|+2$. We have seen
that the Frobenius characteristic of the action of $S_{2d}$ on
subsets of cardinality $d$ is $h_{d,d}$. On the other hand the
action of $S_{2d}$ on sets of cardinality $d+1$ is equivalent to the
action of $S_{2d}$ on left cosets of $S_{[1,d+1]}\times S_{[d+2,2d]}
$ yielding that the Frobenius characteristic for this action is
$h_{d+1}h_{d-1}$. Thus the Frobenius characteristic of the action of
$S_{2d}$ on such pairs must be the Kronecker product
$$
h_{d+1}h_{d-1}\odot h_{d }h_{d }.
$$
It then follows that the number of solutions of the system $\CS_2^2$
is given by the scalar product
\begin{align}
 \LL h_{d+1}h_{d-1}\odot h_{d }h_{d }\scs s_{2d}\RR. \label{2.10}
\end{align}
The same reasoning gives that the number of solutions of the systems
$\CS_2^3$ and $\CS_2^4$ in \eqref{2.4} are given by the scalar
products
\begin{align}
 \LL  h_{d }h_{d }\odot h_{d+1}h_{d-1} \scs s_{2d}\RR
\ess\ess\hbox{and } \ess\ess \LL h_{d+1}h_{d-1}\odot
h_{d+1}h_{d-1}\scs s_{2d}\RR. \label{2.11}
\end{align}
It follows then that the coefficient of $q^{2d}$ in the alternating
sum of formal power series in \eqref{2.3} is none other than the
following alternating sum of the scalar products in \eqref{2.9},
\eqref{2.10} and \eqref{2.11}.
 \begin{align*}
 W_2(q)\Big|_{q^{2d}} &=   \LL
h_dh_d\odot h_dh_d\scs s_{2d}\RR- \LL h_{d+1}h_{d-1}\odot h_{d }h_{d
}\scs s_{2d}\RR \\&\qquad\qquad\qquad\qquad\qquad\qquad- \LL h_{d
}h_{d }\odot h_{d+1}h_{d-1} \scs s_{2d}\RR + \LL h_{d+1}h_{d-1}\odot
h_{d+1}h_{d-1}\scs s_{2d}\RR \cr &= \LL\big( h_dh_d-
h_{d+1}h_{d-1}\big)\odot \big(h_dh_d- h_{d+1}h_{d-1}\big)\scs
s_{2d}\RR= \LL s_{ d,d }\odot s_{ d,d }\scs s_{2d}\RR.
\end{align*}
Summing over $d$ gives
$$
W_2(q)= \sum_{d\ge 0}q^{2d}\LL s_{ d,d }\odot s_{ d,d }\scs
s_{2d}\RR.
$$
An  entirely analogous argument proves  the general identity in
\eqref{I.8}. \end{proof}

\section{\label{sec-3}Enter divided difference operators}
 There is a truly remarkable
approach to the solutions of a variety of constant term problems
which exhibit the same types of symmetries of the Hdd and Sdd
problems. We will introduce the approach in some simple cases first.
We define the \emph{ double} of the Diophantine system
$$
\CS_2= \left\|\ess \mymatrix{ p_1+p_2-p_3-p_4=0\cr
p_1-p_2+p_3-p_4=0\cr }\right. 
$$
to be the system
$$
\CS\CS_2= \bigg\|\ess \mymatrix{ p_1+p_2-p_3-p_4\sps
p_5+p_6-p_7-p_8=0\cr p_1-p_2+p_3-p_4\sps p_5-p_6+p_7-p_8=0\cr }.
$$
As we can easily see we have simply repeated twice each linear form  and appropriately increased the
indices of the variables. Now suppose that we are in possession of the complete generating function of $\CS_2$,
that is
$$
F_{\CS_2}(x_1,x_2,x_3,x_4)= \sum_{p\in \CS_2
}x_1^{p_1}x_2^{p_2}x_3^{p_3}x_4^{p_4}. 
$$
We claim that the complete generating function of $\CS\CS_2$ is
simply given by
\begin{align}
 F_{\CS\CS_2}(x_1,x_2,\ldots ,x_8)=
\dd_{1,5}\dd_{2,6}\dd_{3,7}\dd_{4,8} F_{\CS_2}(x_1,x_2,x_3,x_4),
\label{3.4}
\end{align}
where for any pair of indices $(i,j)$ we let $\dd_{i,j}$ denote the
divided difference operator defined for any function $f(x)$ by
$$
\dd_{i,j} f(x) = \frac{f(x)-f(x)\big|_{x_i=x_j,x_j=x_i}}{ x_i-x_j}.
$$

\begin{proof}[Proof of \eqref{3.4}]
 By MacMahon partition analysis we have
\begin{align}
 F_{\CS_2}(x_1,x_2,x_3,x_4)= {1\over (1-x_1 a_1a_2)}{1\over
(1-x_2 a_1/a_2)} {1\over (1-x_3 a_2/a_1)}{1\over (1-x_4/
a_1a_2)}\ssp \bigg|_{a_1^0a_2^0}. \label{3.6}
\end{align}
Now note that since \begin{align*}
 \dd_{1,5}{1\over (1-x_1
a_1a_2)} &= \left({1\over (1-x_1 a_1a_2)}\sms {1\over (1-x_5
a_1a_2)}\right){1\over x_1-x_5} 
= {a_1a_2 \over  (1-x_1 a_1a_2) (1-x_5 a_1a_2)},
\end{align*}
we obtain similarly
$$
\dd_{2,6}{1\over (1-x_2 a_1/a_2)}= {a_1/a_2 \over  (1-x_2 a_1/a_2)
(1-x_6 a_1/a_2)},
$$
$$
\dd_{3,7}{1\over (1-x_3 a_2/a_1)}= {a_2/a_1 \over  (1-x_3 a_2/a_1)
(1-x_7 a_2/a_1)},
$$
$$
\dd_{4,8}{1\over (1-x_4 /a_1a_2)}= {1/a_1a_2 \over  (1-x_4 /a_1a_2)
(1-x_8/ a_1a_2)}.
$$
Thus applying the operator $\dd_{1,5}\dd_{2,6}\dd_{3,7}\dd_{4,8}$ to
both sides of  \eqref{3.6}  gives
\begin{multline}
 \dd_{1,5}\dd_{2,6}\dd_{3,7}\dd_{4,8} F_{\CS_2}(x_1,x_2,x_3,x_4)
=
 {1\over (1-x_1 a_1a_2)
(1-x_2 a_1/a_2) (1-x_3 a_2/a_1) (1-x_4  /a_1a_2)} \\
 {\over (1-x_5 a_1a_2)
 (1-x_6 a_1/a_2)
(1-x_7 a_2/a_1) (1-x_8 /a_1 a_2)}\bigg|_{a_1^0a_2^0}.\label{3.7}
\end{multline}
 Now we can easily recognize that \eqref{3.7} is precisely the constant term that
MacMahon partition analysis would yield for the system $\CS\CS_2$.
This proves \eqref{3.4}.
\end{proof}

Note that to obtain the equality in \eqref{3.7} we have used the
simple fact that the divided difference operator and the constant
term operator do commute. This is the fundamental property which is
at the root of the present algorithm. This example should make it
evident to have the following more general result (with \emph{
double} modified).

\begin{Theorem}
\label{t-3.1} If $F_\CS(\xon ) $ is the complete generating function
of the Diophantine system
$$\CS = \left\| \ \begin{bmatrix}
                    b_{11} & b_{12} & \cdots & b_{1n} \\
                    \vdots & \vdots & \cdots & \vdots \\
                    b_{r1} & b_{r2} & \cdots & b_{rn} \\
                  \end{bmatrix}
                  \begin{bmatrix}
                    p_1 \\
                    \vdots \\
                    p_n \\
                  \end{bmatrix} = \begin{bmatrix}
                                    c_1 \\
                                    \vdots \\
                                    c_r \\
                                  \end{bmatrix} \right.
, $$ then the complete generating function of the \emph{doubling} of
$\CS$ defined by
$$\CS\CS = \left\| \ \left[\begin{matrix}
                    b_{11} & b_{12} & \cdots & b_{1n} \\
                    \vdots & \vdots & \cdots & \vdots \\
                    b_{r1} & b_{r2} & \cdots & b_{rn} \\
                  \end{matrix}\ \right|\left.\begin{matrix}
                    b_{11} & b_{12} & \cdots & b_{1n} \\
                    \vdots & \vdots & \cdots & \vdots \\
                    b_{r1} & b_{r2} & \cdots & b_{rn} \\
                  \end{matrix} \right]
                  \begin{bmatrix}
                    p_1 \\
                    \vdots \\
                    p_{2n} \\
                  \end{bmatrix} = \begin{bmatrix}
                                    c_1-b_{11}-b_{12}-\cdots-b_{1n} \\
                                    \vdots \\
                                    c_r-b_{r1}-b_{r2}-\cdots-b_{rn} \\
                                  \end{bmatrix} \right.
 $$
is given by the rational function
$$
F_{\CS\CS}(x_1,x_2,\dots,x_{2n} )= \dd_{1,n+1}\dd_{2,n+2}\cdots
\dd_{n,2n}F_\CS(\xon ).
$$
\end{Theorem}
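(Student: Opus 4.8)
The plan is to imitate, in full generality, the computation just carried out for the doubling of $\CS_2$; essentially nothing new is needed beyond careful bookkeeping of exponents. First I would write down the MacMahon partition analysis form of $F_\CS$. Introducing one auxiliary variable $a_i$ for each of the $r$ equations and abbreviating by
$$
A_j = \prod_{i=1}^r a_i^{\,b_{ij}}\qquad(j=1,2,\ldots,n)
$$
the Laurent monomial recording the $j^{\mathrm{th}}$ column of the coefficient matrix, the usual geometric series expansion gives
$$
F_\CS(\xon)= \prod_{j=1}^n\frac1{1-x_jA_j}\cdot\prod_{i=1}^r a_i^{-c_i}\ \Big|_{a_1^0a_2^0\cdots a_r^0}.
$$
Next I would record the one-variable identity that drives the whole argument: for each $j$,
$$
\dd_{j,n+j}\,\frac1{1-x_jA_j}=\frac1{x_j-x_{n+j}}\Big(\frac1{1-x_jA_j}-\frac1{1-x_{n+j}A_j}\Big)=\frac{A_j}{(1-x_jA_j)(1-x_{n+j}A_j)},
$$
which is precisely the computation displayed above for $\dd_{1,5}$, and is valid because the factor $1/(1-x_jA_j)$ does not involve the variable $x_{n+j}$.

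Then I would assemble the pieces. The operators $\dd_{1,n+1},\ldots,\dd_{n,2n}$ act on pairwise disjoint pairs of variables, so they commute with one another and with the operation of taking the constant term in $a_1,\ldots,a_r$; moreover, since $1/(1-x_jA_j)$ is the only factor of the product, and $\prod_i a_i^{-c_i}$ the only part of the prefactor, that involves $x_j$ or $x_{n+j}$, the operator $\dd_{j,n+j}$ passes through everything except that one factor. Applying $\dd_{1,n+1}\cdots\dd_{n,2n}$ to the formula for $F_\CS$ and invoking the one-variable identity $n$ times therefore yields
$$
\dd_{1,n+1}\cdots\dd_{n,2n}F_\CS(\xon)=\Big(\prod_{j=1}^nA_j\Big)\prod_{j=1}^n\frac1{(1-x_jA_j)(1-x_{n+j}A_j)}\cdot\prod_{i=1}^r a_i^{-c_i}\ \Big|_{a_1^0a_2^0\cdots a_r^0}.
$$
Since $\prod_{j=1}^nA_j=\prod_{i=1}^r a_i^{\,b_{i1}+b_{i2}+\cdots+b_{in}}$, the prefactor here equals $\prod_{i=1}^r a_i^{-c_i'}$ with $c_i'=c_i-b_{i1}-b_{i2}-\cdots-b_{in}$, i.e.\ exactly the shifted right-hand sides in the definition of $\CS\CS$. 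Finally I would observe that in $\CS\CS$ the variable $p_{n+j}$ carries the same coefficient column $(b_{1j},\ldots,b_{rj})$ as $p_j$, hence the same column monomial $A_j$, so that MacMahon partition analysis applied directly to $\CS\CS$ gives
$$
F_{\CS\CS}(x_1,\ldots,x_{2n})=\prod_{j=1}^n\frac1{1-x_jA_j}\cdot\prod_{j=1}^n\frac1{1-x_{n+j}A_j}\cdot\prod_{i=1}^r a_i^{-c_i'}\ \Big|_{a_1^0a_2^0\cdots a_r^0}.
$$
This is the same expression, which is the assertion of the theorem.

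There is no serious obstacle here --- the entire content is already present in the worked $k=2$ case --- but two points deserve a little care. The first is the exponent bookkeeping: it is precisely the factor $\prod_j A_j$ produced by the $n$ divided differences that, when absorbed into $\prod_i a_i^{-c_i}$, forces the right-hand sides to shift to $c_i'$, and this is exactly why the doubled system is defined with those particular constants. The second is the fact that the divided difference operator, although not a derivation, may be pulled through the product and past the constant term: this works because each $\dd_{j,n+j}$ annihilates anything symmetric in $x_j$ and $x_{n+j}$ (in particular every factor not indexed by $j$, together with the whole prefactor), while the constant term operator is blind to the $x$ variables. I would state at the outset that all of this takes place in the ring of formal power series in $x_1,\ldots,x_{2n}$ with Laurent-polynomial coefficients in $a_1,\ldots,a_r$, where divided differences and constant-term extraction are well defined and commute --- that single commutation being, as in the preceding example, the fact that makes the scheme run.
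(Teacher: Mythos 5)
Your proof is correct and follows exactly the route the paper intends: it generalizes the worked $\CS\CS_2$ computation by writing $F_\CS$ as a MacMahon constant term with column monomials $A_j$, applying the one-variable identity $\dd_{j,n+j}(1-x_jA_j)^{-1}=A_j/((1-x_jA_j)(1-x_{n+j}A_j))$ factor by factor, and absorbing $\prod_jA_j$ into $\prod_ia_i^{-c_i}$ to produce precisely the shifted constants $c_i-\sum_jb_{ij}$ in the doubled system. The paper itself only asserts the general statement as "evident" from the example, so your write-up supplies the bookkeeping (and the commutation of divided differences with each other and with constant-term extraction) that the paper leaves implicit.
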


This result combined with the next simple observation yields a
powerful algorithm for computing a variety of complete generating
functions.

\begin{Theorem}
\label{3.2} Let $F_\CS(\xon ) $ be the complete generating function
of a Diophantine system $\CS$ then the complete generating function
$F_{\CS\CE}(\xon)$ of the system $\CS\CE$ obtained by adding the
equation
$$
\CE  =\big\|\  r_1 p_1+r_2 p_2+\cdots +r_n p_n=s
$$
to $\CS$ is obtained by taking the constant term
$$
F_{\CS\CE}(\xon)= a^{-s}F_\CS(a^{r_1}x_1,a^{r_2}x_2,\ldots ,
a^{r_n}x_n
)\Big|_{a^0}. 
$$
\end{Theorem}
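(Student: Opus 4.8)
The plan is to argue directly from the definition of the complete generating function, without committing to any particular MacMahon representation of $F_\CS$. By definition
$$
F_\CS(\xon)=\sum_{p\in\CS} x_1^{p_1}x_2^{p_2}\cdots x_n^{p_n},
$$
the sum ranging over all nonnegative integer solutions $p=(p_1,\dots,p_n)$ of $\CS$. First I would perform the substitution $x_i\mapsto a^{r_i}x_i$ and multiply by $a^{-s}$: the monomial indexed by $p$ becomes $a^{\,r_1p_1+\cdots+r_np_n-s}\,x_1^{p_1}\cdots x_n^{p_n}$, so that
$$
a^{-s}F_\CS(a^{r_1}x_1,\dots,a^{r_n}x_n)=\sum_{p\in\CS} a^{\,r_1p_1+\cdots+r_np_n-s}\,x_1^{p_1}\cdots x_n^{p_n}.
$$
I would then note that the right-hand side is a well-defined formal power series in $\xon$ whose coefficients are (single) Laurent monomials in $a$: each $x$-monomial $x^p$ occurs exactly once and carries the unique power $a^{\,r_1p_1+\cdots+r_np_n-s}$, and only finitely many $p$ have a prescribed total degree $p_1+\cdots+p_n$. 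Hence the constant-term operator $\big|_{a^0}$ is unambiguously defined on it.

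Next I would extract the coefficient of $a^0$. From the displayed identity this coefficient is exactly the sum of the monomials $x_1^{p_1}\cdots x_n^{p_n}$ over those $p\in\CS$ satisfying $r_1p_1+\cdots+r_np_n=s$, i.e.\ over the solutions of the enlarged system $\CS\CE$ obtained by adjoining the equation $\CE$ to $\CS$. Since the solution set of $\CS\CE$ is a subset of that of $\CS$, the series obtained is a subseries of $F_\CS(\xon)$, hence again a well-defined formal power series, and by definition it equals $F_{\CS\CE}(\xon)$. This yields
$$
F_{\CS\CE}(\xon)=a^{-s}F_\CS(a^{r_1}x_1,\dots,a^{r_n}x_n)\Big|_{a^0},
$$
as asserted.

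There is essentially no obstacle here: the statement is a restatement of the principle behind MacMahon partition analysis, namely that imposing one more linear equation amounts to introducing one more ``slack'' variable $a$ and reading off its constant term. The only point I would state carefully is the bookkeeping of the previous paragraph --- that the substitution lands in a ring on which $\big|_{a^0}$ makes sense --- together with the remark, needed when this theorem is combined with Theorem~\ref{t-3.1}, that this new constant-term extraction commutes with whatever constant-term operators $\big|_{a_1^0\cdots a_k^0}$ already occur in a MacMahon representation of $F_\CS$; this is the same commutativity of constant-term (and divided-difference) operators with constant-term operators that was used to pass from \eqref{3.6} to \eqref{3.7}. An alternative route is to substitute directly into a product representation $F_\CS(x)=\prod_i(1-x_iA_i)^{-1}\big|_{a_1^0\cdots a_k^0}$ and observe that $a^{-s}\prod_i(1-a^{r_i}x_iA_i)^{-1}$ is precisely the product MacMahon partition analysis assigns to $\CS\CE$; but the representation-free computation above is shorter, so that is the approach I would take.
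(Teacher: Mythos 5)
Your argument is correct and coincides with the paper's own proof: both substitute $x_i\mapsto a^{r_i}x_i$, multiply by $a^{-s}$, and observe that extracting the coefficient of $a^0$ selects exactly those solutions of $\CS$ that also satisfy $\CE$. The extra remarks on well-definedness and commutation with other constant-term operators are sensible but not different in substance from what the paper does.
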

\begin{proof}
 By assumption
$$
F_\CS(\xon)= \sum_{p\in \CS }x_1^{p_1}x_2^{p_2}\cdots x_n^{p_n}.
$$
Now we have \begin{align*}  a^{-s}F_\CS(a^{r_1}x_1,a^{r_2}x_2,\ldots
, a^{r_n}x_n )\Big|_{a^0} &=\sum_{p\in \CS }x_1^{p_1}x_2^{p_2}\cdots
x_n^{p_n} a^{r_1 p_1+r_2 p_2+\cdots +r_n p_n-s}\Big|_{a^0} \cr
&=\sum_{p\in \CS\CE }x_1^{p_1}x_2^{p_2}\cdots x_n^{p_n} \cr &=
F_{\CS\CE}(\xon).\bigsp\bigsp\bigsp 
\end{align*}

\vspace{-20pt}\
\end{proof}
These two results provide us with algorithms for (at least in
principle) computing all the Hdd series $G_k(q)$
as well as the Sdd series $W_k(q)$.
\begin{Algorithm}[Hdd Case]
\label{algo-Fk}

\begin{enumerate}
\myitem {$\bf a_1)$} Initially compute the complete generating
function for the Hdd problem for $k=1$. That is, compute the
constant term
$$
F_1(x_1,x_2)={1\over (1-  x_1a)(1-x_2/a)}\ssp \Big|_{a^0}.
$$

\myitem{$\bf a_k)$} With $F_{k-1}(x_1,\ldots ,x_{2^{k-1}})$ from
step $\bf b_{k-1})$, compute by divided difference
$$
FF_{{k-1}}(x_1,\ldots ,x_{2^{k}})= \dd_{1,1+2^{k-1}}\cdots
\dd_{2^{k-1},2^{k}}F_{k-1}(x_1,\ldots ,x_{2^{k-1}}).
$$

\myitem{$\bf b_k )$} With $FF_{k-1}(x_1,\ldots ,x_{2^{k-1}})$ from
step $\bf a_{k})$,  compute the complete generating function for the
Sdd problem for $k$ by the following constant term:
$$
F_k(x_1,x_2,\ldots,x_{2^k})= FF_{k-1}(ax_1,ax_2,\ldots
,ax_{2^{k-1}},x_{2^{k-1}+1}/a,\ldots ,x_{2^k}/a)\Big |_{a^0}.
$$
\end{enumerate}
\end{Algorithm}

This sequence of steps in Algorithm \ref{algo-Fk} can be terminated
by replacing step $\bf b_k)$ by
\begin{enumerate}

\myitem{$\bf b_k')$} The $q$-generating function $G_k(q)$ is given
by the constant term
$$
G_k(q)\ses F_{\CS\CS_{k-1}}(aq,aq,\ldots ,aq ,q/a,\ldots ,q/a)\Big
|_{a^0}.
$$
\end{enumerate}

The steps up to $\mathbf{b_3)}$ can be carried out by hand. For
further steps we need a computer, and to carry out step
$\mathbf{b_5)}$ by computer we have to introduce one more tool
  as we shall see. Unfortunately Step $\mathbf{b_6)}$ appears beyond reach at the moment.

It will be instructive to see what the first several steps give.

\begin{enumerate}

\myitem {$\bf a_1)$}
$$
F_{\CS_1}(x_1,,x_2)={1\over 1-x_1x_2}.
$$

\myitem {$\bf a_2)$ }
$$
  F_{\CS\CS_1}(x_1,x_2,x_3,x_4)={ (1- x_1 x_2 x_3 x_4 )\over
 (1-x_1 x_2) ( 1- x_2x_3) ( 1-x_1 x_4) ( 1-x_3 x_4)}.
$$
\myitem {$\bf b_2)$}
\begin{align*}
 F_{\CS_2}(x_1,x_2,x_3,x_4) &\ses { (1- x_1 x_2 x_3 x_4 )\over
 (1-a^2x_1 x_2) ( 1- x_2x_3) ( 1-x_1 x_4) ( 1-x_3 x_4/a^2)}\Big|_{a^0}
\cr &\ses { 1\over
 ( 1- x_2x_3) ( 1-x_1 x_4) }.
\end{align*}

\myitem {$\bf a_3)$ } \begin{align*}  F_{\CS\CS_2}(x) &=
{(1-x_1x_4x_5x_8)(1-x_2x_3x_6x_7)\over
(1-x_1x_8)(1-x_2x_7)(1-x_3x_6)(1-x_4x_5)(1-x_1x_4)(1-x_2x_3)(1-x_6x_7)(1-x_5x_8)}
.
\end{align*}

\myitem {$\bf b_3)$ } \begin{align} F_{\CS_3}(x) &=
{(1-x_1x_4x_5x_8)(1-x_2x_3x_6x_7)\over
(1-x_1x_8)(1-x_2x_7)(1-x_3x_6)(1-x_4x_5)} \cr &
\ess\ess\ess\ess\ess\ess\ess\ess\ess\ess\ess\ess\qquad \times{1\over
(1-a^2x_1x_4)(1-a^2x_2x_3)(1-x_6x_7/a^2)(1-x_5x_8/a^2)}\Big|_{a^0}.\ess\ess
\label{3.10}
\end{align}

\end{enumerate}

We can compute this constant term in many ways. In particular we
could use one of the MacMahon identities given by Andrews in [1].
But it is interesting to point out that our divided difference
algorithm has already provided us (in step $\bf a_2)$)  a formula we
can use in step $\bf b_3)$. In fact, the output of step $\bf a_2)$
$$
F_{\CS\CS_1}(x_1,x_2,x_3,x_4)\ses { (1- x_1 x_2 x_3 x_4 )\over
 (1-x_1 x_2) ( 1- x_2x_3) ( 1-x_1 x_4) ( 1-x_3 x_4)}
$$
is the complete generating function of the system
$p_1-p_2+p_3-p_4=0$, so by MacMahon partition analysis we should
also have
$$
 F_{\CS\CS_1}(x_1,x_2,x_3,x_4)\ses{1\over
 (1-ax_1)(1-x_2/a)(1-ax_3)(1-x_4/a)}\Big|_{a^0}.
$$
This implies that
\begin{align*}
 &{1\over (1-a^2x_1x_4)(1-a^2x_2x_3) (1-x_6x_7/a^2)
(1-x_5x_8/a^2)}\bigg|_{a^0} \cr
&\bigsp\bigsp\bigsp\bigsp\ess\ess\ess \ses
 { (1- x_1 x_2 x_3 x_4 )\over
 (1-x_1 x_2) ( 1- x_2x_3) ( 1-x_1 x_4) ( 1-x_3 x_4)}
\Bigg|_{ \mymatrix{ x_1\RA x_1x_4\cr x_3\RA x_2x_3\cr x_2\RA
x_6x_7\cr x_4\RA x_5x_8\cr } } \cr
&\bigsp\bigsp\bigsp\bigsp\ess\ess\ess \ses { (1- x_1 x_2 x_3 x_4x_5
x_6 x_7 x_8  )\over
 (1-x_1x_4 x_6x_7) ( 1- x_6x_7x_2x_3) ( 1-x_1x_4 x_5x_8) ( 1-x_2x_3
 x_5x_8)}.
\end{align*}
Using this in \eqref{3.10} gives
\begin{align*} F_{\CS_3}(x_1,\ldots
,x_8) &\ses {(1-x_1x_4x_5x_8)(1-x_2x_3x_6x_7)\over
(1-x_1x_8)(1-x_2x_7)(1-x_3x_6)(1-x_4x_5)} \cr &
\ess\ess\ess\ess\ess\ess\ess\ess\ess\ess\ess\ess\ess\ess\ess\ess\ess\ess\ess
\times { (1- x_1 x_2 x_3 x_4x_5 x_6 x_7 x_8  )\over
 (1-x_1x_4 x_6x_7) ( 1- x_6x_7x_2x_3) ( 1-x_1x_4 x_5x_8) ( 1-x_2x_3 x_5x_8)}
\cr &\ses { 1- x_1 x_2 x_3 x_4x_5 x_6 x_7 x_8  \over
(1-x_1x_8)(1-x_2x_7)(1-x_3x_6)(1-x_4x_5)(1-x_1x_4 x_6x_7)     (
1-x_2x_3 x_5x_8)} .
\end{align*}
Replacing all the $x_i$ by the single variable $q$, we thus obtain
that
$$
G_1(q)={1\over 1-q^2}\scs\ess  G_2(q)={1\over (1-q^2)^2}\scs\ess
 G_3(q)={1-q^8\over (1-q^2)^4(1-q^4)^2}
\ses{1+q^4\over (1-q^2)^4(1-q^4) }. 
$$
Using the computer to carry out step $\bf b_4')$ gives
$$
 G_4(q)=
  {1+q^2 +21 q^4+36q^6+74q^8+86q^{10}+74q^6+36q^{14}+21q^{16}+q^{18}+q^{20}\over(1-q^2)^7(1-q^4)^4(1-q^6) }.
$$
We shall see later what else has to be done to obtain $G_5(q)$.

\medskip

Our divided difference algorithm can also be adapted to compute the
first 4 Sdd series as well. In fact, again due to the fact that
divided difference operators commute with the constant term
operators, we can also show  that  all the complete Sdd series can
(in principle) be obtained by the following algorithm.

\begin{Algorithm}[Sdd Case]
\label{algo-Wk}

\begin{enumerate}
\myitem {$\bf a_1)$} Initially compute the complete generating
function for the Sdd problem for $k=1$. That is, compute the
constant term
$$
W_1(x_1,x_2)={1-a^2\over (1-  x_1a)(1-x_2/a)}\ssp \Big|_{a^0}.
$$

\myitem{$\bf a_k)$} With $W_{k-1}(x_1,\ldots ,x_{2^{k-1}})$ from
step $\bf b_{k-1})$, compute by divided difference
$$
WW_{{k-1}}(x_1,\ldots ,x_{2^{k}})= \dd_{1,1+2^{k-1}}\cdots
\dd_{2^{k-1},2^{k}}W_{k-1}(x_1,\ldots ,x_{2^{k-1}}).
$$

\myitem{$\bf b_k )$} With $WW_{k-1}(x_1,\ldots ,x_{2^{k-1}})$ from
step $\bf a_{k})$,  compute the complete generating function for the
Sdd problem for $k$ by  the following constant term:
$$
W_k(x_1,x_2,\ldots,x_{2^k})= WW_{k-1}(ax_1,ax_2,\ldots
,ax_{2^{k-1}},x_{2^{k-1}+1}/a,\ldots ,x_{2^k}/a)(1-a^2)\Big |_{a^0}.
$$
\end{enumerate}
\end{Algorithm}

Note that similarly as for the Hdd-case, the sequence of steps in
Algorithm \ref{algo-Wk} can be terminated by replacing step $\bf
b_k)$ by
\begin{enumerate}
\myitem{$\bf b_k')$} To obtain the  generating function $W_k(q)$
compute the constant term
$$
W_k(q)= WW_{k-1}(aq,aq,\ldots ,aq ,q/a,\ldots ,q/a)(1-a^2)\Big
|_{a^0}.
$$
\end{enumerate}

Only  steps $\bf a_1)$ and $\bf a_2)$ can be carried out by hand.
Though steps 3 and 4 are routine they are too messy to do by hand.
But step 5 again needs further tricks to be carried out by computer.
Step 6 appears beyond reach at the moment.

It will be instructive to see what some of these steps give.
\begin{enumerate}
\myitem {$\bf a_1)$}
$$
W_1 (x_1,x_2)={1-x_2^2\over 1-x_1x_2}.
$$

\myitem  {$\bf a_2)$ }
$$
WW_1 (x_1,\ldots,x_4)= {1-x_2^2 -x_2x_4 -x_4^2 +x_1x_2^2x_4 + x_2^2
x_3 x_4 -x_1 x_2x_3 x_4 +x_1 x_2 x_4^2 +x_2 x_3 x_4^2 - x_1 x_2^2
x_3x_4^2 \over
 ( 1 - x_1 x_2) ( 1 - x_3 x_2) ( 1 - x_1 x_4) ( 1 - x_3 x_4)}.
$$

\myitem  {$\bf b_2)$ }
$$
W_2(x_1,x_2,x_3,x_4)= {1-x_2x_4-x_3x_4+x_4^2\over
(1-x_1x_4)(1-x_2x_3)}.
$$
This gives
$$
W_2(q)= {1\over 1-q^2 }.
$$

\myitem  {$\bf a_3)$ }
$$
WW_2(x_1,\ldots,x_8)={(large\ess\ess numerator)\over (1-x_1x_4)
(1-x_1x_8) (1-x_2x_3) (1-x_2x_7) (1-x_3x_6) (1-x_4x_5) (1-x_5x_8)
(1-x_6x_7) }.
$$

\myitem  {$\bf b_3 )$ }
$$
W_3(x_1,\ldots,x_8)= {(large\ess\ess numerator)\over (1-x_1x_8)
(1-x_2x_7) (1-x_3x_6) (1-x_4x_5) (1-x_1x_4x_6x_7) (1-x_2x_3x_5x_8)
}.
$$


\myitem  {$\bf b_3')$} Notwithstanding the complexity of the
previous results it turns out that to obtain $W_3(q)$ we need only
compute the constant term
\begin{align}
W_3(q)= {1\over (1-q^2)}\times {1-a^2\over
(1-q^2a^2)(1-q^2/a^2)}\bigg|_{a^0}. \label{3.12}
\end{align}
To this end we start by determining the coefficients $A$ and $B$ in the partial fraction decomposition
$$
{(1-a^2)a^2\over (1-q^2a^2)(a^2-q^2 )}= {1\over q^2}\sps {A\over
1-q^2a^2} +{B\over a^2-q^2 }
$$
obtaining
\begin{align*}
A&= {(1-a^2)a^2\over  (a^2-q^2 )}\bigg|_{a^2=1/q^2}= \,
{(1-1/q^2)/q^2\over  (1/q^2-q^2 )} = -{1\over q^2(1+q^2)},\\
B&= {(1-a^2)a^2\over (1-q^2a^2)}\bigg|_{a^2=q^2}= {(1-q^2)q^2\over
(1-q^4)}= { q^2\over (1+q^2)},
\end{align*}
(the exact value of $B$ is not needed) and we can write
$$
{1-a^2\over (1-q^2a^2)(1-q^2/a^2)}= {1\over q^2}\sms {1\over
q^2(1+q^2)}\times {1\over (1-a^2q^2)} \sps {1\over
(1+q^2)}\times{q^2/a^2\over 1-q^2/a^2 }.
$$
Thus taking constant terms gives
$$
{1-a^2\over (1-q^2a^2)(1-q^2/a^2)}\bigg|_{a^0}= {1\over q^2}\sms
{1\over q^2(1+q^2)}\sps 0 = {1\over 1+q^2}.
$$
Using this in \eqref{3.12} we finally obtain
$$
W_3(q)={1\over 1-q^4}.
$$

\myitem  {$\bf a_4)$ }
$$
WW_4(x_1,x_2,\ldots, x_{16})= (too\ess  large\ess for\ess
typesetting )
$$

\myitem  {$\bf b_4')$ } Notwithstanding the complexity of the
previous result
 it turns out that to obtain $W_4(q)$ we need only compute the constant term
$$
W_4(q)= {(1+q^4)(1+q^6)\over(1-q^2)(1-q^4)^2} \times {1-a^2\over
(1-a^2q^4)(1-q^4/a^2 )(1-a^4q^4)(1-q^4/a^4 ) }\bigg|_{a^0}.
$$

\end{enumerate}

To illustrate the power and flexibility of the partial fraction
algorithm we will carry this out by hand. The reader is referred to
\cite{2} for a brief tutorial on the use of this algorithm. In the
next few lines we will strictly adhere to the notation and
terminology given  in \cite{2}.

To begin  we note that we need only calculate the constant term
\begin{align}
 C(x)= {1-a \over (1-a x)(1-x/a )(1-a^2x)(1-x/a^2 )
}\bigg|_{a^0}, \label{3.14}
\end{align}
since we can write
\begin{align}
 W_4(q)= {(1+q^4)(1+q^6)\over(1-q^2)(1-q^4)^2} \times C(q^4).
\label{3.15}
\end{align}
Now we have
$$
{1  \over  (1-a^2x)(1-x/a^2 )}= { a^2 \over  (1-a^2x)(a^2-x  )} =
{1\over 1-x^2}{1\over 1-a^2x}\sps {1\over 1-x^2}{x/a^2\over
1-x/a^2}.
$$
Thus \eqref{3.14} may be rewritten in the  form
\begin{align}  C(x) &=
{1\over 1-x^2}\left( {(1-a) \over(1-a x)(1-x/a ) }{1\over
1-a^2x} \bigg|_{a^0} \sps {(1-a) \over(1-a x)(1-x/a  ) } {x/a^2\over
1-x/a^2}\bigg|_{a^0}\right ).\label{3.16}
\end{align}
Note that in the first constant term we have only one dually
contributing term and on the second we have only one contributing
term. This gives
\begin{align}
{(1-a) \over(1-a x)(1-x/a  ) }{1\over 1-a^2x} \bigg|_{a^0}&= {(1-a)
\over(1-a x)  }{1\over 1-a^2x}\bigg|_{a=x} = {(1-x)
\over(1-x^2)  }{1\over 1-x^3} \label{3.17} \\
 {(1-a) \over(1-a x)(1-x/a  ) } {x/a^2\over 1-x/a^2}\bigg|_{a^0}
&= {(1-a) \over (1-x/a  ) } {x/a^2\over 1-x/a^2}\bigg|_{a=1/x} =
{-(1- x) \over (1-x^2  ) } {x^2\over 1-x^3}. \label{3.18}
\end{align}
Using \eqref{3.17} and \eqref{3.18} in \eqref{3.16} we get
\begin{align*}
 C(x) =
{1\over 1-x^2} \left({(1-x) \over(1-x^2)  } {1\over 1-x^3}\sms  {
(1- x) \over (1-x^2  ) } {x^2\over 1-x^3} \right ) ={1-x\over
(1-x^2) (1-x^3)} .
\end{align*}
Together with \eqref{3.15}, we get
$$
W_4(q) = {(1+q^4)(1+q^6)\over(1-q^2)(1-q^4)^2} \times  {1-q^4\over
(1-q^8) (1-q^{12})} =
  {1\over (1-q^2) (1-q^4)^2 (1-q^{6})}.
$$
We will see in section 4  what needs to be done to carry out step  $\bf b'_5$
on the computer.
\sa

The identities for $W_2(q),W_3(q),W_4(q)$ in \eqref{3.20} have also
been derived  in \cite{2} by symmetric function methods from the
relation \eqref{I.8}.
In fact, all three results in \eqref{3.20} are immediate
consequences of the following deeper symmetric function  identity.
(for a proof see \cite[Section 2]{2}.)

\begin{Theorem}
\label{t-3.3}
$$
s_{d,d}\odot s_{d,d}= \sum_{\la\part 2d}s_\la \,\chi(\la  \in EO_4)
$$
where $EO_4$ denotes the set of partitions of length $4$ whose parts
are $\ge 0$ and all even or all odd.
\end{Theorem}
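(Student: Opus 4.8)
The plan is to prove the identity representation-theoretically: realize both sides as $S_{2d}$-characters on spaces of invariants and reduce the statement to a branching computation for $GL_4\downarrow SO_4$. This is in the same spirit as the invariant-theory argument behind Theorem~\ref{t-1.1}. First I would locate $\chi^{(d,d)}$ inside a tensor power. By Schur--Weyl duality for $GL_2=GL(\C^2)$ one has $(\C^2)^{\ot 2d}\cong\bigoplus_{\ell(\la)\le 2}V^{GL_2}_\la\ot S^\la$ as a $GL_2\times S_{2d}$-module, where $S^\la$ denotes the Specht module; since $V^{GL_2}_{(\la_1,\la_2)}$ restricted to $SL_2$ is the irreducible of highest weight $\la_1-\la_2$, its space of $SL_2$-invariants is one-dimensional precisely when $\la_1=\la_2$ and zero otherwise. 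Hence $\big((\C^2)^{\ot 2d}\big)^{SL_2}\cong S^{(d,d)}$ as an $S_{2d}$-module.

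Next, tensoring two copies and reindexing gives $(\C^2)^{\ot 2d}\ot(\C^2)^{\ot 2d}\cong(\C^2\ot\C^2)^{\ot 2d}=(\C^4)^{\ot 2d}$ equivariantly for $SL_2\times SL_2\times S_{2d}$ --- the two $SL_2$'s acting on the two tensor factors of $\C^4$, and $S_{2d}$ permuting the $2d$ slots diagonally --- and, using $(A\ot B)^{G_1\times G_2}=A^{G_1}\ot B^{G_2}$, the diagonal $S_{2d}$-module $S^{(d,d)}\ot S^{(d,d)}$ (whose character is the Kronecker square $\chi^{(d,d)}\odot\chi^{(d,d)}$) is isomorphic to $\big((\C^4)^{\ot 2d}\big)^{SL_2\ot SL_2}$, where $SL_2\ot SL_2\hookrightarrow GL_4$ is the tensor embedding. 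Stripping off the $S_{2d}$-action by Schur--Weyl duality for $GL_4$, namely $(\C^4)^{\ot 2d}=\bigoplus_{\ell(\la)\le 4}V^{GL_4}_\la\ot S^\la$, and applying the Frobenius map, yields
$$ s_{d,d}\odot s_{d,d}=\sum_{\ell(\la)\le 4}\dim\!\big((V^{GL_4}_\la)^{SL_2\ot SL_2}\big)\,s_\la, $$
the sum being forced to $\la\part 2d$ by homogeneity. Thus it remains to prove $\dim(V^{GL_4}_\la)^{SL_2\ot SL_2}=\chi(\la\in EO_4)$.

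For this I would first note that the tensor embedding identifies $SL_2\ot SL_2$ with $SO_4$: each $SL_2$ preserves the symplectic form $\om$ on $\C^2$, so the image preserves the nondegenerate \emph{symmetric} form $\om\ot\om$ on $\C^4$, and a dimension count ($\dim(SL_2\times SL_2)=6=\dim SO_4$, with two-element kernel) together with connectedness forces the image to be all of $SO_4$. Hence $(V^{GL_4}_\la)^{SL_2\ot SL_2}=(V^{GL_4}_\la)^{SO_4}$, whose dimension, by Frobenius reciprocity for the reductive pair $(GL_4,SO_4)$ (using that $SO_4$-modules are self-dual, so $V^{GL_4}_{\la^*}$ and $V^{GL_4}_\la$ restrict isomorphically to $SO_4$), equals the multiplicity of $V^{GL_4}_\la$ in the coordinate ring $\mathcal O(GL_4/SO_4)$. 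Finally I would compute this ring: $\mathcal O(GL_4/O_4)$ is the localization of $\mathrm{Sym}(\mathrm{Sym}^2\C^4)=\bigoplus_{\mu\ \mathrm{even}}V^{GL_4}_\mu$ at the discriminant, so it contains each $V^{GL_4}_\mu$ with $\mu$ having all parts even and $\le 4$ parts exactly once; passing from $O_4$ to the index-two subgroup $SO_4$ adjoins a square root $\sqrt{\det}$ of the discriminant --- a copy of the determinant character $V^{GL_4}_{(1,1,1,1)}$ --- giving $\mathcal O(GL_4/SO_4)=\mathcal O(GL_4/O_4)\oplus\sqrt{\det}\cdot\mathcal O(GL_4/O_4)$, the second summand contributing each $V^{GL_4}_{\mu+(1,1,1,1)}$, i.e. each $V^{GL_4}_\la$ with $\la$ all odd of length $4$, exactly once. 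Collecting the two families, $V^{GL_4}_\la$ occurs in $\mathcal O(GL_4/SO_4)$ with multiplicity $\chi(\la\in EO_4)$, which completes the proof.

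The hard part is this last step, and specifically the passage from $O_4$ to $SO_4$: ordinary $O_4$-invariant theory (equivalently, the first Littlewood restriction rule $GL_4\downarrow SO_4$, valid only in the stable range $\ell(\la)\le 2$) produces just the all-even partitions, and one must correctly account for the ``half-integral'' invariant $\sqrt{\det}$ --- equivalently, apply the modification of the branching rule at $\ell(\la)=4$ --- to recover the all-odd, length-$4$ family; these two families together are exactly $EO_4$. One must also be careful to verify that the tensor embedding lands \emph{onto} $SO_4$ and not merely inside $O_4$, since the count changes otherwise. A more elementary but messier alternative is to expand $s_{d,d}=h_{d,d}-h_{d+1,d-1}$, use the coproduct identity $(h_ah_b)\odot f=m\circ\Delta_{a,b}(f)$ to write $s_{d,d}\odot s_{d,d}$ as an explicit alternating sum of products $h_\mu$ with $\ell(\mu)\le 4$, encode this in the generating function $\sum_d t^{2d}(s_{d,d}\odot s_{d,d})=2P(t)P(-t)-D(t)H(t)$ with $P(t)=\sum_{\ell(\la)\le 2}t^{|\la|}s_\la$, $H(t)=\sum_i t^{2i}h_i^{2}$, $D(t)=\sum_i t^{2i}s_{(i,i)}$, and then extract Schur coefficients via Pieri; there the delicate point becomes the collapse of the resulting signed sums of Littlewood--Richardson numbers onto the indicator function of $EO_4$.
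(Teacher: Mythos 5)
Your argument is correct, but note that the paper itself offers no proof of Theorem \ref{t-3.3}: it defers entirely to \cite[Section 2]{2}, where the identity is obtained by symmetric function methods (essentially the ``messier alternative'' you sketch at the end: expanding $s_{d,d}=h_{d,d}-h_{d+1,d-1}$, using the coproduct formula for Kronecker products with products of $h$'s, and manipulating the resulting generating functions/constant terms). Your primary route is therefore genuinely different and, to my mind, better suited to this paper's theme: you realize $S^{(d,d)}$ as $\big((\C^2)^{\ot 2d}\big)^{SL_2}$ via Schur--Weyl, identify the Kronecker multiplicity $\LL s_{d,d}\odot s_{d,d},s_\la\RR$ with $\dim\big(V^{GL_4}_\la\big)^{SL_2\ot SL_2}$, and then compute that dimension as a $GL_4\downarrow SO_4$ branching multiplicity via $\mathcal{O}(GL_4/SO_4)$. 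The two delicate points are exactly the ones you flag, and you handle both correctly: the image of the tensor embedding is all of $SO_4$ (connectedness plus the dimension/kernel count), and the passage from $\mathcal{O}(GL_4/O_4)=\bigoplus_{\mu\ \mathrm{even}}V_\mu$ to $\mathcal{O}(GL_4/SO_4)$ contributes the extra family $V_{\mu+(1,1,1,1)}$ via the anti-invariant part generated by $\det$, which is precisely the all-odd, length-four half of $EO_4$ (this is the correction to the first Littlewood restriction rule outside the stable range $\ell(\la)\le 2$). What your approach buys is a conceptual explanation of why $EO_4$ appears at all, and it connects Theorem \ref{t-3.3} directly to the invariant-theoretic constant terms of Section \ref{sec-1}; what the symmetric-function route buys is elementary self-containment and the template that \cite{2} uses to attack $s_{d,d}\odot s_{d,d}\odot s_{d,d}$, where no such clean group-theoretic reduction is available.
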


Note that the Kronecker product identity
$$
 \LL s_{d,d} \odot s_{d,d} \odot s_{d,d} \odot s_{d,d}  \odot s_{d,d} \scs s_{2d}\RR
\ses\LL s_{d,d} \odot s_{d,d} \odot s_{d,d}   \scs  s_{d,d}  \odot
s_{d,d}\RR.
$$
suggests obtaining $W_5(q)$ by means of a combinatorial
interpretation of the coefficients of the Schur function expansion
of the Kronecker product $s_{d,d} \odot s_{d,d} \odot s_{d,d}$.
However, to this date no formula has been given for these
coefficients, combinatorial or otherwise.


\section{\label{sec-4} Solving the Hdd problem  for $k=5$  }
This section is divided into four parts. In the first subsection we
start with our computer findings and end by giving a combinatorial
decomposition that works nicely to obtain $F_3(x)$. In the second
subsection, this decomposition is described algebraically and,
together with group actions, turned into manipulatory gyrations that
will be used to extract $G_5(q)$ and $W_5(q)$ out of our computers.
In the third subsection, by combining the idea of decomposition and
the method of divided difference in Section \ref{sec-3}, we give our
best way that reduce the computation time for $G_5(q)$ and $W_5(q)$
down to a few minutes. In the final subsection, we give our first
algorithm to obtain $G_5(q)$ and $W_5(q)$.

\subsection{A combinatorial decomposition for $F_3(x)$}
Our initial efforts at solving the Hdd an Sdd  problems were
entirely carried out by computer experimentation. After obtaining
quite easily the series $G_2(q)$,  $G_3(q)$, $G_4(q)$ and $W_2(q)$,
$W_3(q)$, $W_4(q)$, all the computer packages available to us failed
to directly  deliver $G_5(q)$ and $W_5(q)$.

The computer data  obtained for the Hdd problem for $k=2,3$ were
combinatorially so revealing that we have been left with a strong
impression that this problem should have a very beautiful
combinatorial general solution. Only  time will tell if this will
ever be the case.  To stimulate further research we will begin  by
reviewing our initial computer and manual combinatorial findings.

Recall that we denoted by $\CF_d$ the collection of all $d$-subsets
of the $2d$ element set $\OM_{2d}$. We also showed (in Theorem
\ref{t-2.1}) that the coefficient $m_d(k)$ in the series $
G_k(q)=\sum_{d\le 0} q^{2d}m_d(k)
$
counts the number of orbits under the action of the symmetric group
$\CS_{2d}$ on the $k$-fold  cartesian product $\CF_d\times
\CF_d\times \cdots \times \CF_d$. Denoting by $(A_1,A_2 ,\ldots,
A_k)$ a  generic element of this cartesian product, then each orbit
is uniquely determined by the $2^k$ cardinalities
$$
p_{\eee_1,\eee_2,\cdots ,\eee_k}= \big|A_1^{\eee_1}\cap
A_2^{\eee_2}\cap\cdots \cap A_k^{\eee_k}\big|
$$
where for each $1\le i\le k$ we set

\newcommand\mycases[1]{\left\{\begin{matrix}
#1
\end{matrix}\right.}
$$
A^{\eee_i}_i= \left\{ \begin{array}{ll} \,\,\,A_i & \textrm{ if }
\eee_i=0, \cr \ ^cA_i & \textrm{ if } \eee_i=1. \end{array}\right.
\bigsp (\hbox{here $\ ^cA_i=\OM_{2d}/A_i $}).
$$
It is also convenient to set $ A_{\eee_1,\eee_2,\cdots ,\eee_k}=
A_1^{\eee_1}\cap
A_2^{\eee_2}\cap\cdots \cap A_k^{\eee_k}. 
$
This given we have seen that the condition $(A_1,A_2 ,\ldots,
A_k)\in \CF_d^k $ is equivalent to the Diophantine system
$$
\CS_k= \left\|\ \mymatrix{ \sum_{\eee_1=0}^1\sum_{\eee_2=0}^1\cdots
\sum_{\eee_k=0}^1 (1-2\eee_1)p_{\eee_1,\eee_2,\cdots ,\eee_k}= 0,\cr
\sum_{\eee_1=0}^1\sum_{\eee_2=0}^1\cdots \sum_{\eee_k=0}^1
(1-2\eee_2)p_{\eee_1,\eee_2,\cdots ,\eee_k}= 0,\cr \vdots
\ess\ess\ess\ess \ess\ess\ess\vdots \ess\ess\ess\ess
\ess\ess\ess\vdots \ess\ess\ess\ess \ess\ess\ess \vdots
\ess\ess\ess\ess \ess\ess\ess\vdots \ess\ess\ess\ess \ess\ess\ess
\ess\ess \vdots \cr \sum_{\eee_1=0}^1\sum_{\eee_2=0}^1\cdots
\sum_{\eee_k=0}^1 (1-2\eee_k)p_{\eee_1,\eee_2,\cdots ,\eee_k}=
0,\cr }\right . 
$$
together with the condition $|\OM_{2d}|=2d$, that is $
\sum_{\eee_1=0}^1\sum_{\eee_2=0}^1\cdots \sum_{\eee_k=0}^1
p_{\eee_1,\eee_2,\cdots ,\eee_k}= 2d.
$

There are several algorithms available to solve such a system. See
for instance \cite[Chapter~4.6]{6}. 
 The algorithm we used for our computer
experimentations is the  MacMahon algorithm which has been recently
implemented in MATHEMATICA by   Andrews, Paule and Riese and in
MAPLE by Xin using the partial fraction method of computing constant
terms.

 \noindent The former
can be downloaded from the web site

\centerline {\ita http://www.risc.uni-linz.ac.at/research/combinat/software/Omega/}

\noindent and the latter from the web site

\centerline {\ita
http://www.combinatorics.net.cn/homepage/xin/maple/ell2.rar.}

\noindent For computer implementation we found it more convenient to
use the alternate notation adopted in Remark \ref{rem-1.3}. That is
\begin{align}
 \CS_k= \left\|\ p_1V_1\sps  p_2V_2 \sps \cdots \sps
p_{2^k}V_{2^k}= 0\right. .\label{4.5}
\end{align}
These algorithms may yield quite a bit more than the number of solutions of such a system.
For instance, in our case
letting $\CC_k$ denote the collection of solutions of the system $\CS_k$, the ``Omega package'' of
Andrews, Paule and Riese should, in principle, yield the formal power series
$$
F_k(x_1,x_2,\ldots,x_{2^k})\, = \sum_{(p_1,p_2,\ldots ,p_{2^k})\in \, \CC_k} x_1^{p_1}x_2^{p_2}\cdots x_{2^k}^{p_{2^k}}.
$$
It follows from the general theory of Diophantine systems that
$F_k(x_1,x_2,\ldots,x_{2^k})$ is always the Taylor series of a
rational function.

Now for $\CS_2$ and $\CS_3$ the Omega package gives
\begin{align}
 F_2(x_1,x_2,x_3,x_4)&= {1\over (1-x_1x_4)(1-x_2x_3)} \label{4.7}\\
F_3(x_1,x_2,\ldots,x_8)&={1-x_2x_3x_5x_8x_1x_4x_6x_7\over
(1-x_1x_8)(1-x_2x_7)(1-x_3x_6)(1-x_4x_5)(1-x_2x_3x_5x_8)(1-x_1x_4x_6x_7)}.
\label{4.8}
\end{align}
But this is as far as this package went in our computers.
However we could go further by giving up full information about the solutions
 and only ask for the series
$$
G_k(q)= F_k(x_1,x_2,\ldots,x_{2^k})\big|_{x_i=q},
$$
which can be computed from its constant term representation in
\eqref{I.12}.
For example, the program {\ita Latte} by De Loera, Hemmecke, Tauzer,
Yoshida, which is available at

\centerline{\ita http://www.math.ucdavis.edu /\~\ latte/}

\noindent computed the $G_4(q)$ series in approximately $30$
seconds.  However, this is as far as {\tt Latte} went on our
machines. We  should also mention that all the series $G_k(q)$ and
$W_k(q)$ for $k\le 4$  can be obtained
 in only a few seconds, from the software of Xin by computing the corresponding constant terms
in \eqref{I.12} and \eqref{I.13}.

To get our  computers to deliver $G_5(q)$ and $W_5(q)$ in a matter
of  minutes a divide and conquer strategy had to be adopted. More
precisely, these rational functions were obtained  by decomposing
the constant terms \eqref{I.12} and \eqref{I.13} as  sums of
constant terms. This decomposition had its origin from an effort to
find a human proof of the identities  in \eqref{4.7} and
\eqref{4.8}. More importantly, the surprising simplicity of
\eqref{4.7} and \eqref{4.8} required a combinatorial explanation.
Our findings there provided the combinatorial tools that were used
in our early computations of $G_5(q)$ and $W_5(q)$. This given,
before describing our work on these series, we will show how to
obtain \eqref{4.7} and \eqref{4.8} entirely by hand.

\medskip
Let us start by sketching the idea for $k=2$. Beginning with
$$
\CS_2= \left\| \eqalign { &p_1+p_2 -  p_3-p_4=   0\cr & p_1 -
p_2+p_3-p_4   =  0\cr }\right .
$$
we immediately notice that $(1,0,0,1) $ and $(0,1,1,0)$
are solutions. Set
$$ a=\min(p_1,p_4) \text{ and } b=\min(p_2,p_3).$$ It is
clear that the following difference must also be a solution.
$$
(q_1,q_2,q_3,q_4)=
(p_1,p_2,p_3,p_4)-(a,b,b,a)=(p_1-a,p_2-b,p_3-b,p_4-a).
$$
Now $q_1q_4=0$ and $q_2q_3=0$. This gives us four possibilities for
$(q_1,q_2,q_3,q_4)$:
\begin{align}
 (0,0,x,y)\scs \ess (0,x,0,y)\scs \ess (x,0,y,0)\scs \ess
(x,y,0,0), \label{4.13}
\end{align}
for some nonnegative integers $x,y$. Testing the first equation of
$\CS_2$  immediately forces the first and  last in \eqref{4.13}  to
identically vanish. Similarly, the second equation of $\CS_2$ yields
that the second and third in  \eqref{4.13} must also identically
vanish. This proves that the general solution of $\CS_2$ is of the
form $(a,b,b,a)$. We thus reobtain the full generating function
\eqref{4.7} of solutions  of $\CS_2$:
$$
F_2(x_1,x_2,x_3,x_4)= \sum_{a\ge 0}\sum_{b\ge
0}x_1^ax_2^bx_3^bx_4^a= {1\over (1-x_1x_4)(1-x_2x_3)}.
$$

\medskip
It turns out that we can deal with $\CS_3$ in a similar manner.
Again we begin by noticing the four \emph{ symmetric} solutions
$$
(1,0,0,0,0,0,0,1)\scs \ess (0,1,0,0,0,0,1,0)\scs \ess (0,0,1,0,0,1,0,0) \scs \ess (0,0,0,1,1,0,0,0).
$$
Next we set
$$
a=\min(p_1,p_8), \ess b=\min(p_2,p_7), \ess c=\min(p_3,p_6), \ess
d=\min(p_4,p_5), \ess
$$
and by subtraction we get a solution
\begin{align}
 (q_1,q_2,q_3,q_4,q_5,q_6,q_7,q_8)=
(p_1,p_2,p_3,p_4,p_5,p_6,p_7,p_8)-(a,b,c,d,d,c,b,a) \label{4.14}
\end{align}
with the property $q_iq_{9-i}=0$ for $1\le i\le 4$. It will be good
here and after to call the set
$$
\{ i\in [1,n]\,:\, p_i\ge 1 \}
$$
the \emph{ support} of the composition $(p_1,p_2,\ldots ,p_n)$. This
given, we derive that the resulting composition in \eqref{4.14} will
necessarily have its support contained in at least one of the
following 16 patterns.
\begin{align}
 \eqalign{
&(0,0,0,0,*,*,*,*)\scs\,(0,0,0,*,0,*,*,*)\scs\,(0,0,*,0,*,0,*,*)\scs\,(0,0,*,*,0,0,*,*)\scs\cr
&(0,*,0,0,*,*,0,*)\scs\,(0,*,0,*,0,*,0,*)\scs\,(0,*,*,0,*,0,0,*)\scs\,(0,*,*,*,0,0,0,*)\scs\cr
&(*,0,0,0,*,*,*,0)\scs\,(*,0,0,*,0,*,*,0)\scs\,(*,0,*,0,*,0,*,0)\scs\,(*,0,*,*,0,0,*,0)\scs\cr
&(*,*,0,0,*,*,0,0)\scs\,(*,*,0,*,0,*,0,0)\scs\,(*,*,*,0,*,0,0,0)\scs\,(*,*,*,*,0,0,0,0).
} \label{4.16}
\end{align}
\noindent Unlike the case $k=2$ not all of these patterns force a
trivial solution. To find out which it is helpful to resort to a
Venn diagram imagery. To this end recall that a solution of $\CS_3$
gives the cardinalities of  the 8 regions of the Venn  diagram  of
three $d$-subsets $A_1,A_2,A_3$  of $\Omega_{2d}$ (see Figure
\ref{fig-S3}).

\begin{figure}[hbt]
\begin{center}
\input{ven3.pstex_t}
\end{center}


 \caption{The Ven
diagram for $\CS_3$. \label{fig-S3}}
\end{figure}

 \noindent In Figure \ref{fig-16},  each pattern is represented
by a Venn diagram where in each region
 $A_1^{\eee_1}\cap A_2^{\eee_2}\cap A_3^{\eee_3}$ that corresponds to a $*$ in the pattern
we placed a black dot. That means that only the regions with a dot may have $\ge 0$ cardinality.
 The miracle is that
all but the two patterns
$(0,*,*,0,*,0,0,*)$ and $(*,0,0,*,0,*,*,0)$ can be quickly excluded by a reasoning that only
uses the positions of the dots in the Venn diagram.
 In fact, in each of the excluded cases, we show that it
is impossible to replace the dots by $\ge 0$ integers in such a
manner that the three sets $A_1,A_2,A_3$ and their complements $\
^cA_1,\ ^cA_2,\ ^c A_3$ end up having the same cardinality (except
for all empty sets).

\begin{figure}[hbt]
\begin{center}
\begin{picture}(400,68) \put(200,0){ \put(0,0){\circle{40}}
\put(0,-7){\circle{21}}
\put(-5.5,2){\circle{21}}\put(5.5,2){\circle{21}}
\put(-1,-2.5){\tiny{1}}\put(-1,4.5){\tiny{2}}
\put(-7,-5.5){\tiny{3}}\put(-10,4.5){\tiny{4}}
\put(4.5,-5.5){\tiny{5}}\put(7,4.5){\tiny{6}}
\put(-1,-11.5){\tiny{7}}\put(-1,14.5){\tiny{8}}
\put(0,-3){\circle*{2}} \put(0,4.5){\circle*{2}}
\put(8,-4.5){\circle*{2}} \put(12,3.5){\circle*{2}}

\put(50,0){\put(0,0){\circle{40}} \put(0,-7){\circle{21}}
\put(-5.5,2){\circle{21}}\put(5.5,2){\circle{21}}
\put(-1,-2.5){\tiny{1}}\put(-1,4.5){\tiny{2}}
\put(-7,-5.5){\tiny{3}}\put(-10,4.5){\tiny{4}}
\put(4.5,-5.5){\tiny{5}}\put(7,4.5){\tiny{6}}
\put(-1,-11.5){\tiny{7}}\put(-1,14.5){\tiny{8}}
\put(0,-3){\circle*{2}} \put(0,4.5){\circle*{2}}
\put(-12,3.5){\circle*{2}} \put(12,3.5){\circle*{2}} }

\put(100,0){\put(0,0){\circle{40}} \put(0,-7){\circle{21}}
\put(-5.5,2){\circle{21}}\put(5.5,2){\circle{21}}
\put(-1,-2.5){\tiny{1}}\put(-1,4.5){\tiny{2}}
\put(-7,-5.5){\tiny{3}}\put(-10,4.5){\tiny{4}}
\put(4.5,-5.5){\tiny{5}}\put(7,4.5){\tiny{6}}
\put(-1,-11.5){\tiny{7}}\put(-1,14.5){\tiny{8}}
\put(0,-3){\circle*{2}} \put(0,4.5){\circle*{2}}
\put(-8,-4.5){\circle*{2}} \put(8,-4.5){\circle*{2}}

}

\put(150,0){\put(0,0){\circle{40}} \put(0,-7){\circle{21}}
\put(-5.5,2){\circle{21}}\put(5.5,2){\circle{21}}
\put(-1,-2.5){\tiny{1}}\put(-1,4.5){\tiny{2}}
\put(-7,-5.5){\tiny{3}}\put(-10,4.5){\tiny{4}}
\put(4.5,-5.5){\tiny{5}}\put(7,4.5){\tiny{6}}
\put(-1,-11.5){\tiny{7}}\put(-1,14.5){\tiny{8}}
\put(0,-3){\circle*{2}}\put(0,4.5){\circle*{2}}
\put(-8,-4.5){\circle*{2}} \put(-12,3.5){\circle*{2}} }
}

\put(0,0){ \put(0,0){\circle{40}} \put(0,-7){\circle{21}}
\put(-5.5,2){\circle{21}}\put(5.5,2){\circle{21}}
\put(-1,-2.5){\tiny{1}}\put(-1,4.5){\tiny{2}}
\put(-7,-5.5){\tiny{3}}\put(-10,4.5){\tiny{4}}
\put(4.5,-5.5){\tiny{5}}\put(7,4.5){\tiny{6}}
\put(-1,-11.5){\tiny{7}}\put(-1,14.5){\tiny{8}}
\put(0,-3){\circle*{2}} \put(8,-4.5){\circle*{2}}
\put(12,3.5){\circle*{2}} \put(0,-14.5){\circle*{2}}

\put(50,0){\put(0,0){\circle{40}} \put(0,-7){\circle{21}}
\put(-5.5,2){\circle{21}}\put(5.5,2){\circle{21}}
\put(-1,-2.5){\tiny{1}}\put(-1,4.5){\tiny{2}}
\put(-7,-5.5){\tiny{3}}\put(-10,4.5){\tiny{4}}
\put(4.5,-5.5){\tiny{5}}\put(7,4.5){\tiny{6}}
\put(-1,-11.5){\tiny{7}}\put(-1,14.5){\tiny{8}}
\put(0,-3){\circle*{2}}
 \put(-12,3.5){\circle*{2}}
 \put(12,3.5){\circle*{2}}
\put(0,-14.5){\circle*{2}} }

\put(100,0){\put(0,0){\circle{40}} \put(0,-7){\circle{21}}
\put(-5.5,2){\circle{21}}\put(5.5,2){\circle{21}}
\put(-1,-2.5){\tiny{1}}\put(-1,4.5){\tiny{2}}
\put(-7,-5.5){\tiny{3}}\put(-10,4.5){\tiny{4}}
\put(4.5,-5.5){\tiny{5}}\put(7,4.5){\tiny{6}}
\put(-1,-11.5){\tiny{7}}\put(-1,14.5){\tiny{8}}
\put(0,-3){\circle*{2}} \put(-8,-4.5){\circle*{2}}
\put(8,-4.5){\circle*{2}} \put(0,-14.5){\circle*{2}} }

\put(150,0){\put(0,0){\circle{40}} \put(0,-7){\circle{21}}
\put(-5.5,2){\circle{21}}\put(5.5,2){\circle{21}}
\put(-1,-2.5){\tiny{1}}\put(-1,4.5){\tiny{2}}
\put(-7,-5.5){\tiny{3}}\put(-10,4.5){\tiny{4}}
\put(4.5,-5.5){\tiny{5}}\put(7,4.5){\tiny{6}}
\put(-1,-11.5){\tiny{7}}\put(-1,14.5){\tiny{8}}
\put(0,-3){\circle*{2}} \put(-8,-4.5){\circle*{2}}
\put(-12,3.5){\circle*{2}}

\put(0,-14.5){\circle*{2}} }

}

\put(200,50){\put(0,0){\circle{40}} \put(0,-7){\circle{21}}
\put(-5.5,2){\circle{21}}\put(5.5,2){\circle{21}}
\put(-1,-2.5){\tiny{1}}\put(-1,4.5){\tiny{2}}
\put(-7,-5.5){\tiny{3}}\put(-10,4.5){\tiny{4}}
\put(4.5,-5.5){\tiny{5}}\put(7,4.5){\tiny{6}}
\put(-1,-11.5){\tiny{7}}\put(-1,14.5){\tiny{8}}
 \put(0,4.5){\circle*{2}}

\put(8,-4.5){\circle*{2}} \put(12,3.5){\circle*{2}}
 \put(4,16.5){\circle*{2}}

\put(50,0){\put(0,0){\circle{40}} \put(0,-7){\circle{21}}
\put(-5.5,2){\circle{21}}\put(5.5,2){\circle{21}}
\put(-1,-2.5){\tiny{1}}\put(-1,4.5){\tiny{2}}
\put(-7,-5.5){\tiny{3}}\put(-10,4.5){\tiny{4}}
\put(4.5,-5.5){\tiny{5}}\put(7,4.5){\tiny{6}}
\put(-1,-11.5){\tiny{7}}\put(-1,14.5){\tiny{8}}

 \put(0,4.5){\circle*{2}}
 \put(-12,3.5){\circle*{2}}
 \put(12,3.5){\circle*{2}}
 \put(4,16.5){\circle*{2}}
}

\put(100,0){\put(0,0){\circle{40}} \put(0,-7){\circle{21}}
\put(-5.5,2){\circle{21}}\put(5.5,2){\circle{21}}
\put(-1,-2.5){\tiny{1}}\put(-1,4.5){\tiny{2}}
\put(-7,-5.5){\tiny{3}}\put(-10,4.5){\tiny{4}}
\put(4.5,-5.5){\tiny{5}}\put(7,4.5){\tiny{6}}
\put(-1,-11.5){\tiny{7}}\put(-1,14.5){\tiny{8}}
\put(0,4.5){\circle*{2}} \put(-8,-4.5){\circle*{2}}
\put(8,-4.5){\circle*{2}}
 \put(4,16.5){\circle*{2}}
} \put(150,0){\put(0,0){\circle{40}} \put(0,-7){\circle{21}}
\put(-5.5,2){\circle{21}}\put(5.5,2){\circle{21}}
\put(-1,-2.5){\tiny{1}}\put(-1,4.5){\tiny{2}}
\put(-7,-5.5){\tiny{3}}\put(-10,4.5){\tiny{4}}
\put(4.5,-5.5){\tiny{5}}\put(7,4.5){\tiny{6}}
\put(-1,-11.5){\tiny{7}}\put(-1,14.5){\tiny{8}}

\put(0,4.5){\circle*{2}} \put(-8,-4.5){\circle*{2}}
\put(-12,3.5){\circle*{2}}
 \put(4,16.5){\circle*{2}}
}

} \put(0,50){\put(0,0){\circle{40}} \put(0,-7){\circle{21}}
\put(-5.5,2){\circle{21}}\put(5.5,2){\circle{21}}
\put(-1,-2.5){\tiny{1}}\put(-1,4.5){\tiny{2}}
\put(-7,-5.5){\tiny{3}}\put(-10,4.5){\tiny{4}}
\put(4.5,-5.5){\tiny{5}}\put(7,4.5){\tiny{6}}
\put(-1,-11.5){\tiny{7}}\put(-1,14.5){\tiny{8}}
\put(8,-4.5){\circle*{2}} \put(12,3.5){\circle*{2}}
\put(0,-14.5){\circle*{2}} \put(4,16.5){\circle*{2}}
\put(50,0){\put(0,0){\circle{40}} \put(0,-7){\circle{21}}
\put(-5.5,2){\circle{21}}\put(5.5,2){\circle{21}}
\put(-1,-2.5){\tiny{1}}\put(-1,4.5){\tiny{2}}
\put(-7,-5.5){\tiny{3}}\put(-10,4.5){\tiny{4}}
\put(4.5,-5.5){\tiny{5}}\put(7,4.5){\tiny{6}}
\put(-1,-11.5){\tiny{7}}\put(-1,14.5){\tiny{8}}
\put(-12,3.5){\circle*{2}} \put(12,3.5){\circle*{2}}
\put(0,-14.5){\circle*{2}} \put(4,16.5){\circle*{2}} }

\put(100,0){\put(0,0){\circle{40}} \put(0,-7){\circle{21}}
\put(-5.5,2){\circle{21}}\put(5.5,2){\circle{21}}
\put(-1,-2.5){\tiny{1}}\put(-1,4.5){\tiny{2}}
\put(-7,-5.5){\tiny{3}}\put(-10,4.5){\tiny{4}}
\put(4.5,-5.5){\tiny{5}}\put(7,4.5){\tiny{6}}
\put(-1,-11.5){\tiny{7}}\put(-1,14.5){\tiny{8}}

\put(-8,-4.5){\circle*{2}} \put(8,-4.5){\circle*{2}}
\put(0,-14.5){\circle*{2}} \put(4,16.5){\circle*{2}} }

\put(150,0){\put(0,0){\circle{40}} \put(0,-7){\circle{21}}
\put(-5.5,2){\circle{21}}\put(5.5,2){\circle{21}}
\put(-1,-2.5){\tiny{1}}\put(-1,4.5){\tiny{2}}
\put(-7,-5.5){\tiny{3}}\put(-10,4.5){\tiny{4}}
\put(4.5,-5.5){\tiny{5}}\put(7,4.5){\tiny{6}}
\put(-1,-11.5){\tiny{7}}\put(-1,14.5){\tiny{8}}
\put(-8,-4.5){\circle*{2}} \put(-12,3.5){\circle*{2}}
\put(0,-14.5){\circle*{2}} \put(4,16.5){\circle*{2}} } }
\end{picture}
\end{center}

\vspace{5mm} \caption{The 16 support patterns for $\CS_3$.
\label{fig-16}}
\end{figure}
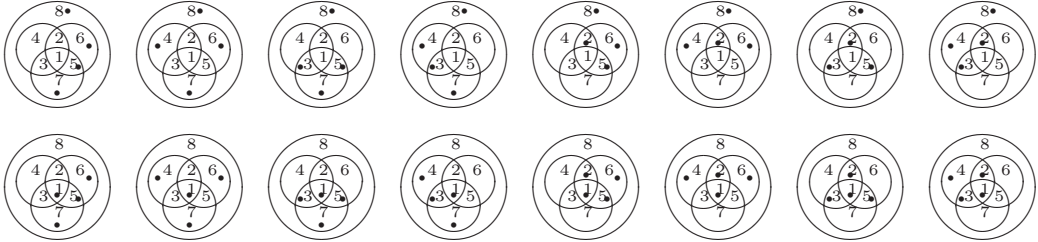

The reasoning is so cute that we are compelled to present it here in
full. In what follows the $j^{\mathrm{th}}$ diagram in the
$i^{\mathrm{th}}$ row will be referred to as ``$D_{ij}$'':

\begin{enumerate}
\myitem {(1)} $D_{11}$, $D_{14}$, $D_{16}$, $D_{23}$, $D_{25}$, and
$D_{28}$ can be immediately excluded because one of $A_1$, $A_2$,
$A_3$, $A_1^c$, $A_2^c$ or $A_3^c$ would be empty.

\myitem {(2)} In $D_{15}$ the dot next to 8 should give the
cardinality of $A_2^c$ (say $d$) and then the dot next to the 2
should also give $d$. But that forces the dots next to 5 and 6 to be
$0$, leaving $A_3$ empty, a contradiction. The same reasoning
applies to $D_{12}$, $D_{13}$, $D_{18}$, $D_{21}$, $D_{24}$,
$D_{26}$, and $D_{27}$.

\end{enumerate}

\noindent That leaves only the two diagrams $D_{17}$ and $D_{22}$
which clearly correspond to the two above mentioned patterns. Now we
see that for $D_{22}$  we must have the equalities $
p_1+p_4=p_1+p_6= p_1+p_7=p_6+p_7. $ This forces $p_1=p_4=p_6=p_7$.
In summary this pattern can only support the composition
$(u,0,0,u,0,u,u,0)$. The same reasoning yields that the diagram
$D_{17}$ can only support the composition $(0,v,v,0,v,0,0,v)$. It
follows that the general solution of $\CS_3$ must be of the form $
(a,b,c,d,d,c,b,a)+(u,v,v,u,v,u,u,v).
 $

Now recall that after the subtraction of a symmetric solution we are
left with an \emph{ asymmetric} solution. Thus to avoid over
counting we must impose the condition $u\, v=0$. This leaves only
three possibilities $u=v=0$, $u>0,v=0$ or $u=0,v>0$. Thus
\begin{align*}
 F_3(x) &=\sum_{a \ge 0}\sum_{ b \ge 0}\sum_{  c
\ge 0}\sum_{ d\ge 0} (x_1x_8)^a(x_2x_7)^b(x_3x_6)^c(x_4x_5)^d\Big(
1\sps \sum_{u\ge 1}(x_1x_4x_6x_7)^u \sps \sum_{v\ge
1}(x_2x_3x_5x_8)^v \Big) \cr & = {1 \over
(1-x_1x_8)(1-x_2x_7)(1-x_3x_6)(1-x_4x_5)  } \left(1\sps
{x_1x_4x_6x_7\over 1-  x_1x_4x_6x_7} \sps  {x_2x_3x_5x_8\over
1-x_2x_3x_5x_8}\right).\end{align*} which is only another way of
writing \eqref{4.8}. \sas
\def \oe  {\overline\eee}

\subsection{Algebraic decompositions and group actions}
It is easy to see that the decomposition of a solution into a sum of
a symmetric plus an asymmetric solution can be carried out for
general $k$. In fact, note that if   $0\le i\le 2^k-1$ has binary
digits $\eee_1\eee_2\cdots \eee_k$ then the binary digits of
$2^k-1-i$ are $\oe_1\oe_2\cdots \oe_k$ (with $\oe=1-\eee$). Thus we
see from \eqref{4.5} that in each equation $p_i$ and $p_{2^k+1-i}$
appear with opposite signs.  This shows that for each $k\ge 2$ the
system $\CS_k$ has $2^{k-1}$ symmetric solutions, which may be
symbolically represented by the monomials $x_ix_{i'}$ for
$i=1,\dots,2^{k-1}$, where we use (and will often use) $i'$ to
denote $2^k+1-i$ when $k$ is fixed.

Proceeding as we did for $\CS_2$ and $\CS_3$ we arrive at a unique
decomposition of each solution of $\CS_k$ into
$$
(p_1,p_2,\ldots ,p_{2^k})= (u_1,u_2,\ldots ,u_2,u_1)\sps
(q_1,q_2,\ldots ,q_{2^k})
$$
with the first summand symmetric and the second asymmetric, that is
$ u_i= u_{i'} $ and $q_i q_{i'}=0$ for $1\le i\le 2^{k-1}$, and
thereby obtain a factorization of $F_k(x)$ in the form
\begin{align}
 F_k(x)= \bigg(\prod_{i=1}^{2^{k-1}}{1\over 1-x_ix_{i'}
}\bigg)F_k^A(x) \label{4.17}
\end{align}
with $F_k^A(x)$ denoting the complete generating function of the
asymmetric solutions. \sas

This given it is tempting to try to apply, in the general case,  the same process we used for $k=3$ and obtain
the rational function $F_k^A(x)$ by selecting the patterns that do contain
the support of an asymmetric solution. Note that the total number of asymmetric
patterns to be examined is $2^{2^{k-1}}$ which is already
$256$ for $k=4$. For $k=5$ the number grows to $65,536$ and doing this by hand is out of the question.
Moreover, it is easy to see, by going through a few cases, that even for $k=4$
the geometry of the Venn Diagrams is so intricate that the only way that we can find out  if a
given pattern contains the support of a solution is to solve the corresponding reduced system.

\medskip
Nevertheless, using some inherent symmetries of the problem, the
complexity of the task can be substantially reduced to permit the
construction of $G_5(q)$ by computer. To describe how this was done
we need some notation. We will start with the complete generating
function of the system $\CS_k$ as given in Remark \ref{rem-1.3},
that is
$$
F_k(x_1,x_2,\ldots ,x_{2^k})= \prod_{i=1}^{2^k}{1\over 1-x_i A_i
}\ssp \bigg|_{a_1^0a_2^0\cdots a_k^0}, 
$$
where $A_i = \prod_{i=1}^k a_i^{1-2\eee_i}$, with
$\eee_1\eee_2\cdots \eee_k$ being the binary digits of $i-1$. Note
that since (as we previously observed)
 the binary digits of $2^k-1-i$ are  $\oe_1\oe_2\cdots \oe_k$, we have $
A_{i'}  = 1/A_i . $ It then follows that
$$
{1-x_ix_{i'}\over  (1-x_iA_i )(1-x_{i'}A_{i'} )}= \left({1\over
1-x_iA_i  }\sps {x_{i'}/A_i\over 1-x_{i'}/A_i  } \right).
$$
Thus combining the factors containing $A_i$ and $A_{i'}$
 we may rewrite \eqref{4.17} in the form
\begin{align}
 F_k(x_1,x_2,\ldots ,x_{2^k})= \prod_{i=1}^{2^{k-1}}{1\over
1-x_ix_{i'}}\prod_{i=1}^{2^{k-1}} \left({1\over  (1-x_iA_i )}\sps
{x_{i'}/A_i\over (1-x_{i'}/A_i )} \right)\ssp
\bigg|_{a_1^0a_2^0\cdots a_k^0}. \label{4.21}
\end{align}
Comparing with \eqref{4.17}  we derive that the complete generating
function of the asymmetric solutions is given by the following sum.
\begin{align}
F_k^A(x)=
 \sum_{S\con[1,2^{k-1}]}
F_S(x), \label{4.22}
\end{align}
where
\begin{align}
 F_S(x)= \bigg(\prod _{i\notin S}{1\over  (1-x_iA_i )}\bigg)
\times \bigg(\prod _{i\in S}{x_{i'}/A_i\over  (1-x_{i'}/A_i
)}\bigg)\ssp \bigg|_{a_1^0a_2^0\cdots a_k^0}. \label{4.23}
\end{align}
In this way we have described our decomposition algebraically. Using
notation as of \eqref{4.5}, we can see that $F_S(x)$ is none other
than the complete generating function of the reduced system
$$
\sum_{i\notin S} p_i V_i\ssp\sps\ssp \sum_{i\in S} p_{i'} V_{i'}= 0
$$
with the added condition that $ p_{i'}\ge 1 $ for all $i\in S.$

Note that for $k=3$ the summands in \eqref{4.22} correspond
precisely to the $16$ patterns in \eqref{4.16} with the added
condition that the ``$*$'' in position $i\ge 5$ should represent
$p_i\ge 1$ in the corresponding solution vector.  This extra
condition is precisely what is needed to eliminate  overcounting.

Perhaps all this is  best understood with an example. For instance
for $k=3$  the  patterns
$$
(*,0,0,*,0,*,*,0)\ess\ess\ess \hbox{and }\ess\ess\ess (0,*,*,0,*,0,0,*)
$$
were the only ones that supported an asymmetric solution represent
the two reduced systems
$$
\CS_{\{14\}}= \bigg\|\mymatrix{ &p_1+p_4-p_6-p_7=0\cr &
p_1-p_4+p_6-p_7=0\cr &p_1-p_4-p_6+p_7=0\cr } \bigsp 
\CS_{\{23\}}= \bigg\|\mymatrix{ &p_2+p_3-p_5-p_8=0\cr
&p_2-p_3+p_5-p_8=0\cr &-p_2+p_3+p_5-p_8=0\cr }
$$
and correspond to the following two summands of \eqref{4.22} for
$k=3$ \begin{align} F_{\{1,4 \} }(x)&= {1\over 1-x_1 a_1a_2 a_3}
{1\over 1-x_4 a_1/a_2a_3} {x_6 a_2/a_1a_3\over 1-x_6 a_2/a_1 a_3}
{x_7 a_3/a_1a_2\over 1-x_7 a_3/a_1 a_2}\ssp \bigg|_{a_1^0a_2^0a_3^0}
= {x_1x_4x_6x_7\over 1-x_1x_4x_6x_7} \label{4.24}\\
F_{\{2,3 \} }(x) &= {1\over 1-x_2 a_1a_2/a_3} {1\over 1-x_3
a_1a_3/a_2} {x_5 a_2a_3/a_1 \over 1-x_5 a_2a_3/a_1 } {x_8
/a_1a_2a_3\over 1-x_8 /a_1 a_2a_3}\ssp
\bigg|_{a_1^0a_2^0a_3^0}\!\!\! = {x_2x_3x_5x_8\over 1-x_2x_3x_5x_8}.
\label{4.25}
\end{align}

A close look at these two expressions should reveal the  key
ingredient that needs to be added to our algorithms that will permit
reaching  $k=5$ in the Hdd and Sdd problems. Indeed we see that
$F_{\{1,4 \} }(x)$ goes onto $ F_{\{2,3 \} }(x)$ if we act on the
vector $(x_1,x_2, \cdots ,x_8)$ by the permutation
\begin{align}
 \sig= \left(\begin{matrix} 1 & 2 & 3 & 4  & 5  & 6  & 7 & 8 \cr 3 & 4 &
1 & 2  & 7  & 8  & 5 & 6 \cr\end{matrix}\right) \label{4.26}
\end{align} and on the
triple $(a_1,a_2,a_3)$ by the operation $a_2\RA a_2^{-1}$. In fact,
$\sig$ is none other than an image of the map
$(\eee_1,\eee_2,\eee_3)\RA (\eee_1,\overline \eee_2,\eee_3)$ on the
binary digits of $0,1,\ldots,7$, as we can easily see when we
replace each $i$ in \eqref{4.26} by the binary digits of $i-1$
$$
\sig= \left(\begin{matrix} 000 & 001 & 010 & 011 & 100 & 101 & 110 &
111 \cr 010 & 011 & 000 & 001 & 110 & 111 & 100 & 101 \cr
\end{matrix}\right).
$$

\newcommand\mypmatrix[1]{\left(\begin{matrix}
#1
\end{matrix}\right)}

What goes on  is quite simple. Recall that solutions $p$ of our
system $\CS_k$ can also be viewed as assignments of weights to the
vertices of the $k$-hypercube giving all hyperfaces equal weight.
Then clearly
 any rotation or reflection of the hypercube will carry this assignment onto an assignment
with the same property. Thus the Hyperoctahedral group $\CB_k$ will act on all the constructs
we used to solve $\CS_k$.
\sas

To make precise the action of $\CB_k$ on $[1,2^k]$ we need some
conventions. \sas
\begin{enumerate}
\myitem{(1)} We will view the  elements of $\CB_k$  as pairs
$(\aa,\eta)$ with a permutation $\aa=(\aa_1,\aa_2,\ldots \aa_k)\in
S_k$ and a binary vector $\eta=(\eta_1,\eta_2,\ldots ,\eta_k)$.

\myitem {(2)} Next, for any binary vector
$\eee=(\eee_1,\eee_2,\ldots ,\eee_k)$ let us set
\begin{align}
 (\aa,\eta)\eee=(\eee_{\aa_1}+\eta_1,\eee_{\aa_2}+\eta_2,\cdots
,\eee_{\aa_k}+\eta_k) \label{4.27}
\end{align}
with ``{\ita mod 2} '' addition.

\myitem {(3)} This given,  to each element $g=(\aa,\eta)\in \CB_k$
there corresponds a permutation $\sig(g)$ by setting
$$
\sig(g)=\mypmatrix{
 1 &  2 &\cdots &  {2^k}\cr
\sig_1 & \sig_2 &\cdots & \sig_{2^k}\cr }. 
$$
where $\sig_i=j$ if and only if the  $k$-vector $\eee=(\eee_1,\eee_2
,\ldots, \eee_k)$ giving the binary digits of $i-1$ is sent by
\eqref{4.27} onto the  $k$-vector giving the binary digits of $j-1$.
In particular we will set
\begin{align}
 g(x_1,x_2,\ldots ,x_{2^k})= (x_{\sig_1},x_{\sig_2},\ldots
,x_{\sig_{2^k}}). \label{4.29}
\end{align}

\myitem {(4)} In the same vein we will make $\CB_k$ act on the
$k$-tuple $(a_1,a_2,\ldots ,a_k)$ by setting, again for
$g=(\aaa,\eta)$
\begin{align}
 g(a_1,a_2,\ldots ,a_k)=
(a_{\aa_1}^{1-2\eta_1},a_{\aa_2}^{1-2\eta_2},\ldots
,a_{\aa_k}^{1-2\eta_k}). \label{4.30}
\end{align}

\end{enumerate}

\noindent With these conventions we can easily derive  that $ g
x_iA_i = x_{\sig_i}A_{\sig_i}. $ Thus
$$
g \prod_{i=1}^{2^k}{1\over 1-x_i A_i }\ssp \bigg|_{a_1^0a_2^0\cdots
a_k^0}= \prod_{i=1}^{2^k}{1\over 1-x_{\sig_i} A_{\sig_i} }\ssp
\bigg|_{a_1^0a_2^0\cdots a_k^0} = \prod_{i=1}^{2^k}{1\over 1-x_i A_i
}\ssp \bigg|_{a_1^0a_2^0\cdots a_k^0}\ssp ,
$$
from which we again derive the $\CB_k$ invariance of the complete
generating function $ F_k(x_1,x_2,\ldots ,x_{2^k}).$

If we let $\CB_{k-1}$ not only act on the indices $1,2,\ldots
,2^{k-1}$, but also on $1',2',\ldots ,{2^{k-1}}'$ by
$\sigma_{i'}=\sigma_{i}'$. Then $\CB_{k-1}$ permutes the summands in
\eqref{4.22} as well as the factors in the product
$$
 \prod_{i=1}^{2^{k-1}}\!\!{1\over 1-x_ix_{i'}}.
$$ Note further that if we only want the $q$-series $G_k(q)$ we can
reduce \eqref{4.22} to
\begin{align}
 G_k^A(q)=
 \sum_{S\con[1,2^{k-1}]}
G_S(q), \label{4.31}
\end{align}
where $ G_S(q)=F_S(x)\big|_{x_i=q}$. But if for some $g\in\CB_{k-1}$
we have
$$
 F_{S_1}(x_{\sig_1},x_{\sig_2},\ldots ,x_{\sig_{2^k}})\ses
 F_{S_2}(x_{ 1},x_{ 2},\ldots ,x_{ {2^k}}),
$$
then replacing each $x_i$ by $q$ converts this to the equality $
G_{S_1}(q)=G_{S_2}(q). $ That means that we need only compute the
constant terms in \eqref{4.31} for orbit representatives, then
replace \eqref{4.31} by a sum over orbit representatives multiplied
by orbit sizes. More precisely we get
\begin{align}
 G_k^A(q)=
 \sum_{i}
m_iG_{S_i}(q), \label{4.32}
\end{align}
where the sum ranges over all orbits and $m_i$ denotes the
cardinality of the orbit of the representative $F_{S_i}(x)$. In the
computer implementation we obtain orbit representatives as well as
orbit sizes, by acting with $\CB_{k-1}$ on the monomials $ M_S=
\prod_{i\in S} x_i.$

Thus for $k=3$ we found that the $16$ summands in \eqref{4.22} break
up into $6$ orbits but only $2$ of them do contribute to $F^A_3$.
They corresponds to the monomials $1$ and $x_1x_4$ with respective
orbit sizes $1$ and $2$. The orbit representative that corresponds
to $1$ is simply the case $S=\phi$ in \eqref{4.23} and that
corresponds to $x_1x_4$ is given in \eqref{4.24}.

Thus from \eqref{4.24}, \eqref{4.32} and \eqref{4.21}  we derive
(again) that
$$
G_3(q)\ses {1\over (1-q^2)^4}\Big(1+2 {q^4\over 1-q^4}\Big) \ses
{1\over (1-q^2)^4} {1+q^4\over 1-q^4}.
$$
For $k=4$ we have $2^8=256$ summands in  \eqref{4.22}  with $22$
orbits but only $11$ of these orbits do contribute to $F^A_4$. The
number of denominator factors for each term is $8$ which is still a
reasonable number for the partial fraction algorithm. The formula
for $F_4(x)$ obtained this way can be typed within a page, but we
would like to introduce a nicer $F_4(x)$ using the full group
$\CB_k$ instead of $\CB_{k-1}$, as we will do  in the next
paragraph. For $k=5$ we have $2^{16}$ summands in \eqref{4.22} with
$402$ orbits but only $341$ orbits do contribute to $F^A_5$. The
number of denominator factors for each term is $16$ which is out of
reach for the partial fraction algorithm to obtain $F^A_5(x)$.
Nevertheless, in  this manner  we can still  produce $G_5(q)$ in
about 15 minutes.

\medskip
The decomposition in \eqref{4.31} is only $\CB_{k-1}$ invariant, and
it is natural from the geometry of the hypercube labelings, to ask
of a $\CB_{k}$ invariant decomposition. To obtain such a
decomposition of $F_k(x)$ we will pair off the factors containing
$A_i$ and $A_{i'}$ by means of the more symmetric identity
$$
{1-x_ix_{2^k+1-i}\over  (1-x_iA_i )(1-x_{2^k+1-i}A_{2^k+1-i} )}=
\left(1\sps {x_iA_i\over   1-x_iA_i  }\sps {x_{i'}A_{i'}\over
1-x_{i'}A_{i'}  } \right)
$$
and derive that
$$
F_k(x)= \sum_{S\cup T\con[1,2^{k-1}] }  F_{S,T}(x),
$$
where $S$ and $T$ are disjoint and
$$
F_{S,T}(x)= \bigg( \prod_{i\in S}^k{x_iA_i\over 1-x_iA_i}\bigg)
\bigg(\prod_{i\in T}{x_i'/A_i\over   1-x_i'/A_i }\bigg).
$$
Note that every pair $(S,T)$ should be  identified with  the set $S
\cup \{i' : i\in T\} \con [1,2^k]$ when applying the action of
$\CB_k$.

\begin{Example}
For $k=3$ we have $3^4=81$ summands with $9$ orbits but only $2$
orbits do contribute to $F^A_3$. The two orbits corresponds to the
monomials $1$ and $x_1x_4x_6x_7$ with respective orbit sizes $1$ and
$2$. The orbit representative that corresponds to $1$ is simply the
case $F_{\phi,\phi}=1|_{a_1^0a_2^0a_3^0a_4^0}=1$ and that
corresponds to $x_1x_4x_6x_7$ is
$$F_{\{1,4\},\{2,3\}}(x)= {x_1A_1 \over 1-x_1A_1}{x_4A_4 \over 1-x_4A_4}{x_6A_6 \over 1-x_6A_6} {x_7A_7 \over 1-x_7A_7}\Big|_{a_1^0a_2^0a_3^0a_4^0}
={x_1x_4x_6x_7 \over 1- x_1x_4x_6x_7} .  $$ Therefore, we can
reobtain $G_3(q)$ by using \eqref{4.32} as follows.
$$G_3(q)=\frac{1}{(1-q^2)^4}(1+\frac{2q^4}{1-q^4})=\frac{1+q^4}{(1-q^2)^4(1-q^4)}.$$
\end{Example}

\begin{Example}\label{exa-k4}
For $k=4$ we have $3^8=6561$ summands with $62$ orbits but only $10$
orbits do contribute to $F^A_4$. We obtain the following complete
generating functions for the $10$ orbit representatives:
$$
 (1)  \ess 1
$$
$$
 (24)  \ess  {  x_1    x_{15}    x_4
 x_{14}  \over 1-x_1x_{15}x_4x_{14}}
$$
$$
(16) \ess {  x_{16}    x_7  \left({ x_9 }\right)^{2} x_6    x_4
\over
 1-x_{16}x_7{x_9}^{2}x_6x_4}
$$
$$
 (96) \ess  { x_{15}
   x_3    x_7  \left({ x_{12}
 }\right)^{2}\left({  x_9  }\right)^{2}\left({
 x_6  }\right)^{3}\over
 \left({1-x_{12}x_7x_9x_6}\right)\left({1-x_{15}x_3x_{12}x_9{x_6}^{2}}\right)}
$$
$$
(96) \ess   {  x_{16}    x_{14}    x_5
 x_7  \left({  x_{11}
}\right)^{2}\left({  x_2  }\right)^{2}\over
\left({1-x_2x_7x_{11}x_{14}}\right)\left({1-x_{16}x_5x_2x_{11}}\right)}
$$
$$
(192)\ess   {  x_9    x_{10}    x_1  \left({ x_4 }\right)^{4}\left({
x_{15}  }\right)^{3}\left({ x_5 }\right)^{2}\left({ x_{14}
}\right)^{2}\over
 \left({1-x_1x_{15}x_4x_{14}}\right)\left({1-x_{15}
x_5x_{10}x_4}\right)\left({1-x_{15}x_5x_9{x_4}^{2}x_{14}}\right)}
$$
$$
(64) \ess  {  x_6    x_{16}    x_4  \left({ x_3 }\right)^{2}\left({
x_5  }\right)^{2}\left({ x_{15} }\right)^{2}\left({ x_{10}
}\right)^{3}\over
 \left({1-x_{15}x_5x_{10}x_4}\right)\left({1-x_{16}x_5x_3x_{10}}\right)
\left({1-x_{15}x_3x_{10}x_6}\right)}
$$
$$
(64) \ess  {  x_3    x_7    x_4  \left({ x_6 }\right)^{5}  x_1
\left({
 x_9  }\right)^{3}\left({  x_{12}
}\right)^{3}\left({  x_{15}  }\right)^{3}\over
 \left({1-x_1x_{15}x_{12}x_6}\right)\left({1-x_{12}x_7x_9x_6}\right)
\left({1-x_{15}x_9x_6x_4}\right)\left({1-x_{15}x_3x_{12}x_9{x_6}^{2}}\right)}
$$
$$
(32) \ess {\left({  x_{13}  }\right)^{3}\left({ x_{12} }\right)^{3}
x_1    x_3  x_2  x_6  x_7
  x_8
\left({1-x_1x_2x_3x_8{x_{12}}^{3}x_7{x_{13}}^{3}x_6}\right)\over
 \left({1-x_1x_8x_{12}x_{13}}\right)\left({1-x_2x_{12}x_7x_{13}}\right)
\left({1-x_3x_{12}x_{13}x_6}\right)\left({1-x_1{x_{12}}^{2}x_7x_{13}x_6}\right)
\left({1-x_2x_3x_8x_{12}{x_{13}}^{2}}\right)}
$$

$$
   {(8) \ess\ess x_4    x_5    x_3
 x_6    x_9   x_{10}
   x_{15}    x_{16}
\left({1-2\,x_{15}x_{16}x_5x_3x_{10}x_9x_6x_4+{x_{15}}^{2}{x_{16}}^{2}{x_5}^{2}
{x_3}^{2}{x_{10}}^{2}{x_9}^{2}{x_6}^{2}{x_4}^{2}}\right)\over
 \left({1-x_{16}x_3x_9x_6}\right)
\left({1-x_{16}x_5x_9x_4}\right)
\left({1-x_{15}x_9x_6x_4}\right)
\left({1-x_{15}x_5x_{10}x_4}\right)
\left({1-x_{16}x_5x_3x_{10}}\right)
\left({1-x_{15}x_3x_{10}x_6}\right)}
 $$
Here the numbers in parentheses give the respective orbit sizes.

Replacing all the $x_i$ by $q$ and summing as in \eqref{4.32}, we
obtain
$$
G_4(q)=
{1+q^2+21q^4+36q^6+74q^8+86q^{10}+74q^{12}+36q^{14}+21q^{16}+q^{18}+q^{20}
\over (1-q^2)^{7}(1-q^4)^4(1-q^6)}.
$$
We should mention that the partial fraction algorithm delivers this
rational function in less than a second by directly computing the
constant term in \eqref{I.12} for $k=4$. We computed the above
representatives because it contains more information and can be used
for an alternate path to $G_5(q)$.
\end{Example}

Computing the orbit representatives for $k=5$ requires the
construction of the $2^5\times 5!=3840$ elements of $\CB_5$ and
examining their action on  the $3^{16}=43046721$ symmetric supports.
This took a few hours on our computers. We found in this manner that
the $43046721$ summands in \eqref{4.31} break up into $ 15418 $
orbits and of these $6341$ contribute to the sum. Most of the orbits
have denominators of less than $16$ factors. It also took about $15$
minutes to persuade MAPLE to deliver $G_5(q)$ in the form displayed
in the introduction.

It turns out that the same orbit reduction idea can also be used to
compute $W_5(q)$, but much more complicated. Let us explain the
details in the next subsection.

\begin{Remark}
  \label{rem-4.1}
It is interesting to point out that  computing complete generating
functions for orbit representatives of summands in \eqref{4.22}
yielded as a byproduct orbit representatives of the extreme rays of
our Diophantine cone for $k=4$ and $k=5$. Note that for  $k=3$ the
representatives can be directly derived from our hand computation,
there are only two and the corresponding Venn Diagrams are

\hspace{2.25cm}\vspace{0.5cm}
\begin{picture}(100,80)
\put(0,0){\line(10,0){60}} \put(0,0){\line(0,10){60}}
\put(60,0){\line(0,10){60}} \put(0,60){\line(10,0){60}}
\put(30,35){\circle{25}}
\put(23.75,24.2){\circle{25}}\put(36.25,24.2){\circle{25}}
\put(29,38){\tiny{1}}\put(18,20){\tiny{1}}\put(40,20){\tiny{1}}
\put(120,0){\textrm{and}}

\put(195,0){ \put(0,0){\line(10,0){60}} \put(0,0){\line(0,10){60}}
\put(60,0){\line(0,10){60}} \put(0,60){\line(10,0){60}}
\put(30,35){\circle{25}}
\put(23.75,24.2){\circle{25}}\put(36.25,24.2){\circle{25}}
\put(29,26.5){\tiny{1}}\put(29,7){\tiny{1}} }
\end{picture}

\noindent Here the regions without numbers are empty. The number $1$
indicates that the region has only one element. For $k=4$ we found
that there are only three orbits, containing $24,8$ and $16$
elements respectively, the corresponding diagrams are depicted
below.

\hspace{-0.5cm} \vspace{0.25cm}
\begin{picture}(60,80)
\put(0,0){\line(10,0){60}} \put(0,0){\line(0,10){60}}
\put(60,0){\line(0,10){60}} \put(0,60){\line(10,0){60}}
\put(30,35){\circle{25}}
\put(23.75,24.2){\circle{25}}\put(36.25,24.2){\circle{25}}
\put(29,26.5){\tiny{1}}

\put(29,65){\tiny{\texttt{$\mathbf{A_1}$}}}
\put(29,52){\tiny{\texttt{$\mathbf{A_2}$}}}
\put(8,10){\tiny{\texttt{$\mathbf{A_4}$}}}
\put(48,10){\tiny{\texttt{$\mathbf{A_3}$}}}

\put(60,0){\put(0,0){\line(10,0){60}} \put(0,0){\line(0,10){60}}
\put(60,0){\line(0,10){60}} \put(0,60){\line(10,0){60}}
\put(30,35){\circle{25}}
\put(23.75,24.2){\circle{25}}\put(36.25,24.2){\circle{25}}
\put(29,26.5){\tiny{1}}

\put(27,65){\tiny{\texttt{$^c\mathbf{A_1}$}}}
\put(29,52){\tiny{\texttt{$\mathbf{A_2}$}}}
\put(8,10){\tiny{\texttt{$\mathbf{A_4}$}}}
\put(48,10){\tiny{\texttt{$\mathbf{A_3}$}}}}

\put(150,0){\put(0,0){\line(10,0){60}} \put(0,0){\line(0,10){60}}
\put(60,0){\line(0,10){60}} \put(0,60){\line(10,0){60}}
\put(30,35){\circle{25}}
\put(23.75,24.2){\circle{25}}\put(36.25,24.2){\circle{25}}
\put(29,26.5){\tiny{1}}\put(29,38){\tiny{1}}\put(29,7){\tiny{1}}

\put(29,65){\tiny{\texttt{$\mathbf{A_1}$}}}
\put(29,52){\tiny{\texttt{$\mathbf{A_2}$}}}
\put(8,10){\tiny{\texttt{$\mathbf{A_4}$}}}
\put(48,10){\tiny{\texttt{$\mathbf{A_3}$}}}

\put(60,0){\put(0,0){\line(10,0){60}} \put(0,0){\line(0,10){60}}
\put(60,0){\line(0,10){60}} \put(0,60){\line(10,0){60}}
\put(30,35){\circle{25}}
\put(23.75,24.2){\circle{25}}\put(36.25,24.2){\circle{25}}
\put(18,20){\tiny{1}}\put(40,20){\tiny{1}}

\put(27,65){\tiny{\texttt{$^c\mathbf{A_1}$}}}
\put(29,52){\tiny{\texttt{$\mathbf{A_2}$}}}
\put(8,10){\tiny{\texttt{$\mathbf{A_4}$}}}
\put(48,10){\tiny{\texttt{$\mathbf{A_3}$}}}}}

\put(300,0){\put(0,0){\line(10,0){60}} \put(0,0){\line(0,10){60}}
\put(60,0){\line(0,10){60}} \put(0,60){\line(10,0){60}}
\put(30,35){\circle{25}}
\put(23.75,24.2){\circle{25}}\put(36.25,24.2){\circle{25}}
\put(29,38){\tiny{1}}\put(18,20){\tiny{1}}\put(40,20){\tiny{1}}

\put(29,65){\tiny{\texttt{$\mathbf{A_1}$}}}
\put(29,52){\tiny{\texttt{$\mathbf{A_2}$}}}
\put(8,10){\tiny{\texttt{$\mathbf{A_4}$}}}
\put(48,10){\tiny{\texttt{$\mathbf{A_3}$}}}

\put(60,0){\put(0,0){\line(10,0){60}} \put(0,0){\line(0,10){60}}
\put(60,0){\line(0,10){60}} \put(0,60){\line(10,0){60}}
\put(30,35){\circle{25}}
\put(23.75,24.2){\circle{25}}\put(36.25,24.2){\circle{25}}
\put(29,26.5){\tiny{2}}\put(29,7){\tiny{1}}

\put(27,65){\tiny{\texttt{$^c\mathbf{A_1}$}}}
\put(29,52){\tiny{\texttt{$\mathbf{A_2}$}}}
\put(8,10){\tiny{\texttt{$\mathbf{A_4}$}}}
\put(48,10){\tiny{{$\mathbf{A_3}$}}}}}

\end{picture}

 Note, for $k=4$ each Venn diagram is
depicted as a pair of  Venn diagrams of $k=3$. The first member of the pair renders the Venn diagram
of $  A_1\cap A_2,A_1\cap A_3,A_1\cap A_4$  and the second member renders the Venn diagram
of $  \ ^cA_1\cap A_2, \ ^cA_1\cap A_3, \ ^cA_1\cap A_4$.
\sas

For $k=5$ we found that there are $2712$ extreme rays which break up
into $9$ orbits. We give in Figure \ref{fig-ext5} a set of
representatives depicted as assignments of weights to the vertices
of the $5$ dimensional hypercube. We imagine that the vertices of
this hypercube are indexed by the binary  digits  of  $0,1,2,\ldots
,31$ with $00000$ the vertex  at the origin and $11111$ giving the
coordinates of the opposite vertex. In Figure \ref{fig-ext5}  each
hypercube is represented by two rows of two cubes. The cubes in the
first row, from left to right, have the  vertices labeled with the
binary digits of 1 to 16 (minus 1) and the cubes in the second row
have the vertices labeled with the binary digits of 17 to 32 (minus
1). The vertices here have possible weights $0,1,2,3$ and,
correspondingly, are surrounded by $0,1,2,3$ concentric circles. The
integer on the top of each diagram gives the size of the
corresponding orbit.

\begin{figure}[hbt]
\begin{picture}(600,255)
\put(70,0){

\put(0,0){\line(10,0){20}}\put(20,0){\line(0,10){20}}
\put(0,20){\line(10,0){20}}\put(0,0){\line(0,10){20}}
\put(0,0){\line(1,1){10}}\put(20,0){\line(1,1){10}}
\put(0,20){\line(1,1){10}} \put(20,20){\line(1,1){10}}
\put(10,10){\line(10,0){20}}\put(30,10){\line(0,10){20}}
\put(10,30){\line(10,0){20}}\put(10,10){\line(0,10){20}}
\put(0,20){\circle{5}}\put(30,30){\circle{5}}

\put(32,70){\tiny{\textbf{960}}}

\put(35,0){\put(0,0){\line(10,0){20}}\put(20,0){\line(0,10){20}}
\put(0,20){\line(10,0){20}}\put(0,0){\line(0,10){20}}
\put(0,0){\line(1,1){10}}\put(20,0){\line(1,1){10}}
\put(0,20){\line(1,1){10}} \put(20,20){\line(1,1){10}}
\put(10,10){\line(10,0){20}}\put(30,10){\line(0,10){20}}
\put(10,30){\line(10,0){20}}\put(10,10){\line(0,10){20}}
\put(10,10){\circle{5}}\put(20,0){\circle{5}}

}

\put(0,35){\put(0,0){\line(10,0){20}}\put(20,0){\line(0,10){20}}
\put(0,20){\line(10,0){20}}\put(0,0){\line(0,10){20}}
\put(0,0){\line(1,1){10}}\put(20,0){\line(1,1){10}}
\put(0,20){\line(1,1){10}} \put(20,20){\line(1,1){10}}
\put(10,10){\line(10,0){20}}\put(30,10){\line(0,10){20}}
\put(10,30){\line(10,0){20}}\put(10,10){\line(0,10){20}}
\put(0,0){\circle{5}}\put(0,0){\circle{3}}

\put(35,0){\put(0,0){\line(10,0){20}}\put(20,0){\line(0,10){20}}
\put(0,20){\line(10,0){20}}\put(0,0){\line(0,10){20}}
\put(0,0){\line(1,1){10}}\put(20,0){\line(1,1){10}}
\put(0,20){\line(1,1){10}} \put(20,20){\line(1,1){10}}
\put(10,10){\line(10,0){20}}\put(30,10){\line(0,10){20}}
\put(10,30){\line(10,0){20}}\put(10,10){\line(0,10){20}}
\put(30,30){\circle{5}}\put(30,30){\circle{3}}
 }}

\put(90,0){\put(0,0){\line(10,0){20}}\put(20,0){\line(0,10){20}}
\put(0,20){\line(10,0){20}}\put(0,0){\line(0,10){20}}
\put(0,0){\line(1,1){10}}\put(20,0){\line(1,1){10}}
\put(0,20){\line(1,1){10}} \put(20,20){\line(1,1){10}}
\put(10,10){\line(10,0){20}}\put(30,10){\line(0,10){20}}
\put(10,30){\line(10,0){20}}\put(10,10){\line(0,10){20}}
\put(30,30){\circle{5}}

 \put(32,70){\tiny{\textbf{32}}}

\put(35,0){\put(0,0){\line(10,0){20}}\put(20,0){\line(0,10){20}}
\put(0,20){\line(10,0){20}}\put(0,0){\line(0,10){20}}
\put(0,0){\line(1,1){10}}\put(20,0){\line(1,1){10}}
\put(0,20){\line(1,1){10}} \put(20,20){\line(1,1){10}}
\put(10,10){\line(10,0){20}}\put(30,10){\line(0,10){20}}
\put(10,30){\line(10,0){20}}\put(10,10){\line(0,10){20}}
\put(10,30){\circle{5}}\put(20,20){\circle{5}}\put(30,10){\circle{5}}

}

\put(0,35){\put(0,0){\line(10,0){20}}\put(20,0){\line(0,10){20}}
\put(0,20){\line(10,0){20}}\put(0,0){\line(0,10){20}}
\put(0,0){\line(1,1){10}}\put(20,0){\line(1,1){10}}
\put(0,20){\line(1,1){10}} \put(20,20){\line(1,1){10}}
\put(10,10){\line(10,0){20}}\put(30,10){\line(0,10){20}}
\put(10,30){\line(10,0){20}}\put(10,10){\line(0,10){20}}
\put(0,0){\circle{5}}\put(0,0){\circle{3}}\put(0,0){\circle{7}}

\put(35,0){\put(0,0){\line(10,0){20}}\put(20,0){\line(0,10){20}}
\put(0,20){\line(10,0){20}}\put(0,0){\line(0,10){20}}
\put(0,0){\line(1,1){10}}\put(20,0){\line(1,1){10}}
\put(0,20){\line(1,1){10}} \put(20,20){\line(1,1){10}}
\put(10,10){\line(10,0){20}}\put(30,10){\line(0,10){20}}
\put(10,30){\line(10,0){20}}\put(10,10){\line(0,10){20}}
\put(30,30){\circle{5}} }}}

\put(180,0){\put(0,0){\line(10,0){20}}\put(20,0){\line(0,10){20}}
\put(0,20){\line(10,0){20}}\put(0,0){\line(0,10){20}}
\put(0,0){\line(1,1){10}}\put(20,0){\line(1,1){10}}
\put(0,20){\line(1,1){10}} \put(20,20){\line(1,1){10}}
\put(10,10){\line(10,0){20}}\put(30,10){\line(0,10){20}}
\put(10,30){\line(10,0){20}}\put(10,10){\line(0,10){20}}

\put(30,10){\circle{5}}

\put(32,70){\tiny{\textbf{320}}}

\put(35,0){\put(0,0){\line(10,0){20}}\put(20,0){\line(0,10){20}}
\put(0,20){\line(10,0){20}}\put(0,0){\line(0,10){20}}
\put(0,0){\line(1,1){10}}\put(20,0){\line(1,1){10}}
\put(0,20){\line(1,1){10}} \put(20,20){\line(1,1){10}}
\put(10,10){\line(10,0){20}}\put(30,10){\line(0,10){20}}
\put(10,30){\line(10,0){20}}\put(10,10){\line(0,10){20}}
\put(10,30){\circle{5}}\put(10,30){\circle{3}}\put(20,20){\circle{5}}\put(20,20){\circle{3}}
 }

\put(0,35){\put(0,0){\line(10,0){20}}\put(20,0){\line(0,10){20}}
\put(0,20){\line(10,0){20}}\put(0,0){\line(0,10){20}}
\put(0,0){\line(1,1){10}}\put(20,0){\line(1,1){10}}
\put(0,20){\line(1,1){10}} \put(20,20){\line(1,1){10}}
\put(10,10){\line(10,0){20}}\put(30,10){\line(0,10){20}}
\put(10,30){\line(10,0){20}}\put(10,10){\line(0,10){20}}
\put(0,0){\circle{5}}\put(0,0){\circle{3}}\put(0,0){\circle{7}}\put(30,30){\circle{5}}

\put(35,0){\put(0,0){\line(10,0){20}}\put(20,0){\line(0,10){20}}
\put(0,20){\line(10,0){20}}\put(0,0){\line(0,10){20}}
\put(0,0){\line(1,1){10}}\put(20,0){\line(1,1){10}}
\put(0,20){\line(1,1){10}} \put(20,20){\line(1,1){10}}
\put(10,10){\line(10,0){20}}\put(30,10){\line(0,10){20}}
\put(10,30){\line(10,0){20}}\put(10,10){\line(0,10){20}}
\put(30,10){\circle{5}}
 }}}

\put(0,90){\put(0,0){\line(10,0){20}}\put(20,0){\line(0,10){20}}
\put(0,20){\line(10,0){20}}\put(0,0){\line(0,10){20}}
\put(0,0){\line(1,1){10}}\put(20,0){\line(1,1){10}}
\put(0,20){\line(1,1){10}} \put(20,20){\line(1,1){10}}
\put(10,10){\line(10,0){20}}\put(30,10){\line(0,10){20}}
\put(10,30){\line(10,0){20}}\put(10,10){\line(0,10){20}}
\put(0,20){\circle{5}}

\put(32,70){\tiny{\textbf{384}}}

\put(35,0){\put(0,0){\line(10,0){20}}\put(20,0){\line(0,10){20}}
\put(0,20){\line(10,0){20}}\put(0,0){\line(0,10){20}}
\put(0,0){\line(1,1){10}}\put(20,0){\line(1,1){10}}
\put(0,20){\line(1,1){10}} \put(20,20){\line(1,1){10}}
\put(10,10){\line(10,0){20}}\put(30,10){\line(0,10){20}}
\put(10,30){\line(10,0){20}}\put(10,10){\line(0,10){20}}
\put(10,10){\circle{5}}\put(20,20){\circle{5}} }

\put(0,35){\put(0,0){\line(10,0){20}}\put(20,0){\line(0,10){20}}
\put(0,20){\line(10,0){20}}\put(0,0){\line(0,10){20}}
\put(0,0){\line(1,1){10}}\put(20,0){\line(1,1){10}}
\put(0,20){\line(1,1){10}} \put(20,20){\line(1,1){10}}
\put(10,10){\line(10,0){20}}\put(30,10){\line(0,10){20}}
\put(10,30){\line(10,0){20}}\put(10,10){\line(0,10){20}}
\put(0,0){\circle{5}}\put(30,10){\circle{5}}

\put(35,0){\put(0,0){\line(10,0){20}}\put(20,0){\line(0,10){20}}
\put(0,20){\line(10,0){20}}\put(0,0){\line(0,10){20}}
\put(0,0){\line(1,1){10}}\put(20,0){\line(1,1){10}}
\put(0,20){\line(1,1){10}} \put(20,20){\line(1,1){10}}
\put(10,10){\line(10,0){20}}\put(30,10){\line(0,10){20}}
\put(10,30){\line(10,0){20}}\put(10,10){\line(0,10){20}}
\put(30,30){\circle{5}}}}

\put(90,0){\put(0,0){\line(10,0){20}}\put(20,0){\line(0,10){20}}
\put(0,20){\line(10,0){20}}\put(0,0){\line(0,10){20}}
\put(0,0){\line(1,1){10}}\put(20,0){\line(1,1){10}}
\put(0,20){\line(1,1){10}} \put(20,20){\line(1,1){10}}
\put(10,10){\line(10,0){20}}\put(30,10){\line(0,10){20}}
\put(10,30){\line(10,0){20}}\put(10,10){\line(0,10){20}}
\put(30,10){\circle{5}}

\put(32,70){\tiny{\textbf{480}}}

\put(35,0){\put(0,0){\line(10,0){20}}\put(20,0){\line(0,10){20}}
\put(0,20){\line(10,0){20}}\put(0,0){\line(0,10){20}}
\put(0,0){\line(1,1){10}}\put(20,0){\line(1,1){10}}
\put(0,20){\line(1,1){10}} \put(20,20){\line(1,1){10}}
\put(10,10){\line(10,0){20}}\put(30,10){\line(0,10){20}}
\put(10,30){\line(10,0){20}}\put(10,10){\line(0,10){20}}
\put(20,0){\circle{5}}\put(10,30){\circle{5}} }

\put(0,35){\put(0,0){\line(10,0){20}}\put(20,0){\line(0,10){20}}
\put(0,20){\line(10,0){20}}\put(0,0){\line(0,10){20}}
\put(0,0){\line(1,1){10}}\put(20,0){\line(1,1){10}}
\put(0,20){\line(1,1){10}} \put(20,20){\line(1,1){10}}
\put(10,10){\line(10,0){20}}\put(30,10){\line(0,10){20}}
\put(10,30){\line(10,0){20}}\put(10,10){\line(0,10){20}}
\put(0,0){\circle{5}}\put(20,20){\circle{5}}

\put(35,0){\put(0,0){\line(10,0){20}}\put(20,0){\line(0,10){20}}
\put(0,20){\line(10,0){20}}\put(0,0){\line(0,10){20}}
\put(0,0){\line(1,1){10}}\put(20,0){\line(1,1){10}}
\put(0,20){\line(1,1){10}} \put(20,20){\line(1,1){10}}
\put(10,10){\line(10,0){20}}\put(30,10){\line(0,10){20}}
\put(10,30){\line(10,0){20}}\put(10,10){\line(0,10){20}}
\put(10,30){\circle{5}} }}}

\put(180,0){\put(0,0){\line(10,0){20}}\put(20,0){\line(0,10){20}}
\put(0,20){\line(10,0){20}}\put(0,0){\line(0,10){20}}
\put(0,0){\line(1,1){10}}\put(20,0){\line(1,1){10}}
\put(0,20){\line(1,1){10}} \put(20,20){\line(1,1){10}}
\put(10,10){\line(10,0){20}}\put(30,10){\line(0,10){20}}
\put(10,30){\line(10,0){20}}\put(10,10){\line(0,10){20}}
\put(30,30){\circle{5}}

\put(32,70){\tiny{\textbf{320}}}

\put(35,0){\put(0,0){\line(10,0){20}}\put(20,0){\line(0,10){20}}
\put(0,20){\line(10,0){20}}\put(0,0){\line(0,10){20}}
\put(0,0){\line(1,1){10}}\put(20,0){\line(1,1){10}}
\put(0,20){\line(1,1){10}} \put(20,20){\line(1,1){10}}
\put(10,10){\line(10,0){20}}\put(30,10){\line(0,10){20}}
\put(10,30){\line(10,0){20}}\put(10,10){\line(0,10){20}}
\put(0,20){\circle{5}}\put(30,10){\circle{5}}
 }

\put(0,35){\put(0,0){\line(10,0){20}}\put(20,0){\line(0,10){20}}
\put(0,20){\line(10,0){20}}\put(0,0){\line(0,10){20}}
\put(0,0){\line(1,1){10}}\put(20,0){\line(1,1){10}}
\put(0,20){\line(1,1){10}} \put(20,20){\line(1,1){10}}
\put(10,10){\line(10,0){20}}\put(30,10){\line(0,10){20}}
\put(10,30){\line(10,0){20}}\put(10,10){\line(0,10){20}}
\put(0,0){\circle{5}}\put(0,0){\circle{3}}

\put(35,0){\put(0,0){\line(10,0){20}}\put(20,0){\line(0,10){20}}
\put(0,20){\line(10,0){20}}\put(0,0){\line(0,10){20}}
\put(0,0){\line(1,1){10}}\put(20,0){\line(1,1){10}}
\put(0,20){\line(1,1){10}} \put(20,20){\line(1,1){10}}
\put(10,10){\line(10,0){20}}\put(30,10){\line(0,10){20}}
\put(10,30){\line(10,0){20}}\put(10,10){\line(0,10){20}}
\put(30,30){\circle{5}} }}}}

\put(0,180){\put(0,0){\line(10,0){20}}\put(20,0){\line(0,10){20}}
\put(0,20){\line(10,0){20}}\put(0,0){\line(0,10){20}}
\put(0,0){\line(1,1){10}}\put(20,0){\line(1,1){10}}
\put(0,20){\line(1,1){10}} \put(20,20){\line(1,1){10}}
\put(10,10){\line(10,0){20}}\put(30,10){\line(0,10){20}}
\put(10,30){\line(10,0){20}}\put(10,10){\line(0,10){20}}

\put(32,70){\tiny{\textbf{16}}}

\put(35,0){\put(0,0){\line(10,0){20}}\put(20,0){\line(0,10){20}}
\put(0,20){\line(10,0){20}}\put(0,0){\line(0,10){20}}
\put(0,0){\line(1,1){10}}\put(20,0){\line(1,1){10}}
\put(0,20){\line(1,1){10}} \put(20,20){\line(1,1){10}}
\put(10,10){\line(10,0){20}}\put(30,10){\line(0,10){20}}
\put(10,30){\line(10,0){20}}\put(10,10){\line(0,10){20}}
\put(20,20){\circle{5}} }

\put(0,35){\put(0,0){\line(10,0){20}}\put(20,0){\line(0,10){20}}
\put(0,20){\line(10,0){20}}\put(0,0){\line(0,10){20}}
\put(0,0){\line(1,1){10}}\put(20,0){\line(1,1){10}}
\put(0,20){\line(1,1){10}} \put(20,20){\line(1,1){10}}
\put(10,10){\line(10,0){20}}\put(30,10){\line(0,10){20}}
\put(10,30){\line(10,0){20}}\put(10,10){\line(0,10){20}}
\put(10,10){\circle{5}}

\put(35,0){\put(0,0){\line(10,0){20}}\put(20,0){\line(0,10){20}}
\put(0,20){\line(10,0){20}}\put(0,0){\line(0,10){20}}
\put(0,0){\line(1,1){10}}\put(20,0){\line(1,1){10}}
\put(0,20){\line(1,1){10}} \put(20,20){\line(1,1){10}}
\put(10,10){\line(10,0){20}}\put(30,10){\line(0,10){20}}
\put(10,30){\line(10,0){20}}\put(10,10){\line(0,10){20}}

}}

\put(90,0){\put(0,0){\line(10,0){20}}\put(20,0){\line(0,10){20}}
\put(0,20){\line(10,0){20}}\put(0,0){\line(0,10){20}}
\put(0,0){\line(1,1){10}}\put(20,0){\line(1,1){10}}
\put(0,20){\line(1,1){10}} \put(20,20){\line(1,1){10}}
\put(10,10){\line(10,0){20}}\put(30,10){\line(0,10){20}}
\put(10,30){\line(10,0){20}}\put(10,10){\line(0,10){20}}

\put(32,70){\tiny{\textbf{80}}}

\put(35,0){\put(0,0){\line(10,0){20}}\put(20,0){\line(0,10){20}}
\put(0,20){\line(10,0){20}}\put(0,0){\line(0,10){20}}
\put(0,0){\line(1,1){10}}\put(20,0){\line(1,1){10}}
\put(0,20){\line(1,1){10}} \put(20,20){\line(1,1){10}}
\put(10,10){\line(10,0){20}}\put(30,10){\line(0,10){20}}
\put(10,30){\line(10,0){20}}\put(10,10){\line(0,10){20}}
\put(20,20){\circle{5}}\put(10,30){\circle{5}} }

\put(0,35){\put(0,0){\line(10,0){20}}\put(20,0){\line(0,10){20}}
\put(0,20){\line(10,0){20}}\put(0,0){\line(0,10){20}}
\put(0,0){\line(1,1){10}}\put(20,0){\line(1,1){10}}
\put(0,20){\line(1,1){10}} \put(20,20){\line(1,1){10}}
\put(10,10){\line(10,0){20}}\put(30,10){\line(0,10){20}}
\put(10,30){\line(10,0){20}}\put(10,10){\line(0,10){20}}
\put(0,0){\circle{5}}\put(30,10){\circle{5}}

\put(35,0){\put(0,0){\line(10,0){20}}\put(20,0){\line(0,10){20}}
\put(0,20){\line(10,0){20}}\put(0,0){\line(0,10){20}}
\put(0,0){\line(1,1){10}}\put(20,0){\line(1,1){10}}
\put(0,20){\line(1,1){10}} \put(20,20){\line(1,1){10}}
\put(10,10){\line(10,0){20}}\put(30,10){\line(0,10){20}}
\put(10,30){\line(10,0){20}}\put(10,10){\line(0,10){20}}}}}

\put(180,0){\put(0,0){\line(10,0){20}}\put(20,0){\line(0,10){20}}
\put(0,20){\line(10,0){20}}\put(0,0){\line(0,10){20}}
\put(0,0){\line(1,1){10}}\put(20,0){\line(1,1){10}}
\put(0,20){\line(1,1){10}} \put(20,20){\line(1,1){10}}
\put(10,10){\line(10,0){20}}\put(30,10){\line(0,10){20}}
\put(10,30){\line(10,0){20}}\put(10,10){\line(0,10){20}}

\put(32,70){\tiny{\textbf{120}}}

\put(35,0){\put(0,0){\line(10,0){20}}\put(20,0){\line(0,10){20}}
\put(0,20){\line(10,0){20}}\put(0,0){\line(0,10){20}}
\put(0,0){\line(1,1){10}}\put(20,0){\line(1,1){10}}
\put(0,20){\line(1,1){10}} \put(20,20){\line(1,1){10}}
\put(10,10){\line(10,0){20}}\put(30,10){\line(0,10){20}}
\put(10,30){\line(10,0){20}}\put(10,10){\line(0,10){20}}

\put(10,10){\circle{5}}\put(20,20){\circle{5}}}

\put(0,35){\put(0,0){\line(10,0){20}}\put(20,0){\line(0,10){20}}
\put(0,20){\line(10,0){20}}\put(0,0){\line(0,10){20}}
\put(0,0){\line(1,1){10}}\put(20,0){\line(1,1){10}}
\put(0,20){\line(1,1){10}} \put(20,20){\line(1,1){10}}
\put(10,10){\line(10,0){20}}\put(30,10){\line(0,10){20}}
\put(10,30){\line(10,0){20}}\put(10,10){\line(0,10){20}}
\put(0,0){\circle{5}}\put(30,30){\circle{5}}

\put(35,0){\put(0,0){\line(10,0){20}}\put(20,0){\line(0,10){20}}
\put(0,20){\line(10,0){20}}\put(0,0){\line(0,10){20}}
\put(0,0){\line(1,1){10}}\put(20,0){\line(1,1){10}}
\put(0,20){\line(1,1){10}} \put(20,20){\line(1,1){10}}
\put(10,10){\line(10,0){20}}\put(30,10){\line(0,10){20}}
\put(10,30){\line(10,0){20}}\put(10,10){\line(0,10){20}}}}}}

}

\end{picture}
\caption{\label{fig-ext5}Representatives of extreme rays for $k=5$.}
\end{figure}
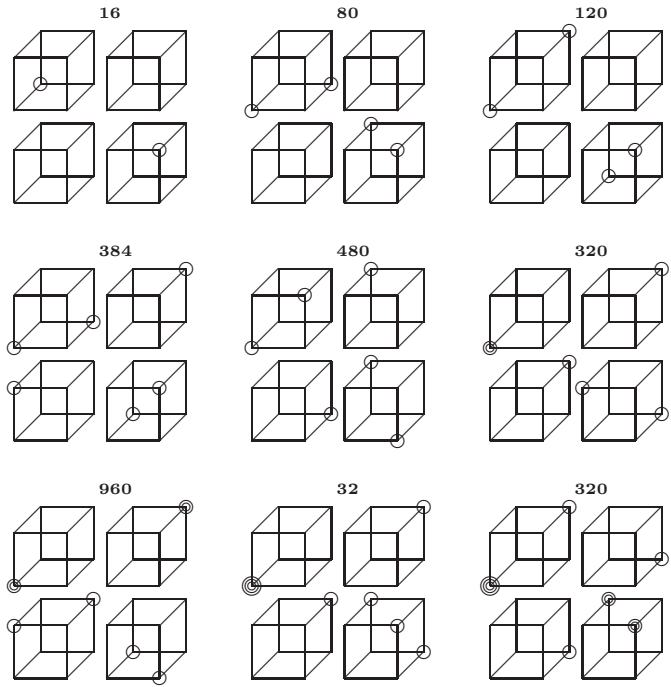

Each of the corresponding solutions of our system $\CS_5$ is \emph{
minimal}, that is, it cannot be decomposed into a non-trivial sum of
solutions. But we found that there are also $480$ minimal solutions
that do not come from extreme rays. The latter break up into two
orbits, with representatives depicted in Figure \ref{fig-min5}.

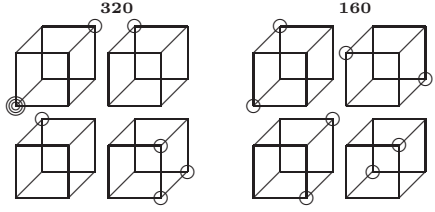
\begin{figure}[hbt]
\hspace{-9cm} \mbox{\begin{picture}(100,90)

\put(100,0){ \put(0,0){\line(10,0){20}}\put(20,0){\line(0,10){20}}
\put(0,20){\line(10,0){20}}\put(0,0){\line(0,10){20}}
\put(0,0){\line(1,1){10}}\put(20,0){\line(1,1){10}}
\put(0,20){\line(1,1){10}} \put(20,20){\line(1,1){10}}
\put(10,10){\line(10,0){20}}\put(30,10){\line(0,10){20}}
\put(10,30){\line(10,0){20}}\put(10,10){\line(0,10){20}}
\put(10,30){\circle{5}}

\put(32,70){\tiny{\textbf{320}}}

\put(35,0){\put(0,0){\line(10,0){20}}\put(20,0){\line(0,10){20}}
\put(0,20){\line(10,0){20}}\put(0,0){\line(0,10){20}}
\put(0,0){\line(1,1){10}}\put(20,0){\line(1,1){10}}
\put(0,20){\line(1,1){10}} \put(20,20){\line(1,1){10}}
\put(10,10){\line(10,0){20}}\put(30,10){\line(0,10){20}}
\put(10,30){\line(10,0){20}}\put(10,10){\line(0,10){20}}
\put(20,0){\circle{5}}\put(20,20){\circle{5}}\put(30,10){\circle{5}}

}

\put(0,35){\put(0,0){\line(10,0){20}}\put(20,0){\line(0,10){20}}
\put(0,20){\line(10,0){20}}\put(0,0){\line(0,10){20}}
\put(0,0){\line(1,1){10}}\put(20,0){\line(1,1){10}}
\put(0,20){\line(1,1){10}} \put(20,20){\line(1,1){10}}
\put(10,10){\line(10,0){20}}\put(30,10){\line(0,10){20}}
\put(10,30){\line(10,0){20}}\put(10,10){\line(0,10){20}}
\put(0,0){\circle{5}}\put(0,0){\circle{3}}\put(0,0){\circle{7}}
\put(30,30){\circle{5}}

\put(35,0){\put(0,0){\line(10,0){20}}\put(20,0){\line(0,10){20}}
\put(0,20){\line(10,0){20}}\put(0,0){\line(0,10){20}}
\put(0,0){\line(1,1){10}}\put(20,0){\line(1,1){10}}
\put(0,20){\line(1,1){10}} \put(20,20){\line(1,1){10}}
\put(10,10){\line(10,0){20}}\put(30,10){\line(0,10){20}}
\put(10,30){\line(10,0){20}}\put(10,10){\line(0,10){20}}
\put(10,30){\circle{5}}
 }}

 \put(90,0){\put(0,0){\line(10,0){20}}\put(20,0){\line(0,10){20}}
\put(0,20){\line(10,0){20}}\put(0,0){\line(0,10){20}}
\put(0,0){\line(1,1){10}}\put(20,0){\line(1,1){10}}
\put(0,20){\line(1,1){10}} \put(20,20){\line(1,1){10}}
\put(10,10){\line(10,0){20}}\put(30,10){\line(0,10){20}}
\put(10,30){\line(10,0){20}}\put(10,10){\line(0,10){20}}
\put(20,0){\circle{5}}\put(30,30){\circle{5}}

\put(32,70){\tiny{\textbf{160}}}

\put(35,0){\put(0,0){\line(10,0){20}}\put(20,0){\line(0,10){20}}
\put(0,20){\line(10,0){20}}\put(0,0){\line(0,10){20}}
\put(0,0){\line(1,1){10}}\put(20,0){\line(1,1){10}}
\put(0,20){\line(1,1){10}} \put(20,20){\line(1,1){10}}
\put(10,10){\line(10,0){20}}\put(30,10){\line(0,10){20}}
\put(10,30){\line(10,0){20}}\put(10,10){\line(0,10){20}}
\put(10,10){\circle{5}}\put(20,20){\circle{5}}

}

\put(0,35){\put(0,0){\line(10,0){20}}\put(20,0){\line(0,10){20}}
\put(0,20){\line(10,0){20}}\put(0,0){\line(0,10){20}}
\put(0,0){\line(1,1){10}}\put(20,0){\line(1,1){10}}
\put(0,20){\line(1,1){10}} \put(20,20){\line(1,1){10}}
\put(10,10){\line(10,0){20}}\put(30,10){\line(0,10){20}}
\put(10,30){\line(10,0){20}}\put(10,10){\line(0,10){20}}
\put(0,0){\circle{5}}\put(10,30){\circle{5}}

\put(35,0){\put(0,0){\line(10,0){20}}\put(20,0){\line(0,10){20}}
\put(0,20){\line(10,0){20}}\put(0,0){\line(0,10){20}}
\put(0,0){\line(1,1){10}}\put(20,0){\line(1,1){10}}
\put(0,20){\line(1,1){10}} \put(20,20){\line(1,1){10}}
\put(10,10){\line(10,0){20}}\put(30,10){\line(0,10){20}}
\put(10,30){\line(10,0){20}}\put(10,10){\line(0,10){20}}
\put(0,20){\circle{5}}\put(30,10){\circle{5}}
 }}}}
\end{picture}
}

\caption{Representatives for minimal solutions but not extreme
rays.\label{fig-min5}}
\end{figure}
\end{Remark}

\subsection{Our fastest way for $G_5(q)$ and $W_5(q)$}
With the notations in the previous subsection and Section
\ref{sec-3} handy, we can describe our best way to obtain $G_5(q)$
and $W_5(q)$.

Let us explain the idea for $k=5$. In Example \ref{exa-k4} we have
obtained for $F_4^A(x)$ 10 orbit representatives with corresponding
orbit sizes. Denote them by $R_i(x)$ the representatives and $m_i$
the orbit sizes for $i=1,\dots, 10$. From this we can give explicit
formula of $F_4^A(x)$ and hence of $F_4(x)$ with the help of $\CB_4$
action as follows.
\begin{align}\label{e-F4}
F_4(x)=\frac{F_4^A(x)}{\prod_{i=1}^8(1-x_ix_{17-i})}=
\sum_{i=1}^{10} \frac{m_i}{|\CB_4|} \sum_{g\in \CB_4} g
\frac{R_i}{\prod_{i=1}^8(1-x_ix_{17-i})}
\end{align}
Applying Algorithm \ref{algo-Fk} $\mathbf{a_5), b_5)}$ to
\eqref{e-F4}, we can obtain $F_5(x)$ by multilinearity.
\begin{align}
\label{e-F5} F_5(x)= \sum_{i=1}^{10} \frac{m_i}{|\CB_4|} \sum_{g\in
\CB_4} \left(g \delta_{1,17}\cdots \delta_{16,32}
\frac{R_i}{\prod_{i=1}^8(1-x_ix_{17-i})}\right)\bigg|^{j=1,2,\dots,16}_{x_j=x_ja,x_{16+j}=x_{16+j}/a}\Bigg|_{a^0}
,
\end{align}
where we have used the straightforwardly checked fact: for any
rational function $R(x_1,\dots,x_{16})$ and $g\in \CB_k$, it holds
that
$$\delta_{1,17}\cdots \delta_{16,32} \; g R(x_1,\dots,x_{16})= g \delta_{1,17}\cdots \delta_{16,32}  R(x_1,\dots,x_{16}),$$
where $g$ is extended to permute also indices $16+j$ by
$g(16+j)=16+g(j)$ for $j=1,\dots,16$.

Substituting $x_j=q$ for all $j$ into \eqref{e-F5} gives
\begin{align}
\label{e-divgroup}
 G_5(q)= \sum_{i=1}^{10} m_i \left(\delta_{1,17}\cdots
\delta_{16,32}
\frac{R_i}{\prod_{i=1}^8(1-x_ix_{17-i})}\right)\bigg|^{j=1,2,\dots,16}_{x_j=x_ja,x_{16+j}=x_{16+j}/a}\
\Bigg|_{a^0} .
\end{align} That is to say, we only need representatives of $F_{k-1}(x)$
together with orbit sizes to compute $F_k(x)$, and this clearly
extends for general $k$. Using \eqref{e-divgroup}, we can persuade
Maple to deliver $G_5(q)$ as in \eqref{I.14} in about 12 minutes.

\medskip
The orbit reduction idea for $G_5(q)$ works in a similar way for
$W_5(q)$. In fact, we can carry out almost verbatim the same steps
that yielded the orbit decomposition of the complete generating
function $F_k(x_1,x_2,\ldots ,x_{2^k})$ to obtain the complete
generating function  $\widetilde{W}_k(x_1,x_2,\ldots ,x_{2^k})$ as
we shall define.
 Recall that the $W_k(x)$ was originally defined in \eqref{1.18} as the constant
term
\begin{align}
 W_k(x_1,x_2,\ldots ,x_{2^k})\ses \prod_{j=1}^k(1-a_j^2)
\prod_{i=1}^{2^k}{1\over 1-x_iA_i}\bigg|_{a_1^0 a_2^0\cdots a_k^0}.
\label{4.35}
\end{align}
To carry out its decomposition we need only observe that if we let
\begin{align}
 \widetilde{W}_k(x_1,x_2,\ldots ,x_{2^k})\ses {1\over
2^k}\prod_{j=1}^k(1-a_j^2)(1-a_j^{-2}) \prod_{i=1}^{2^k}{1\over
1-x_iA_i}\bigg|_{a_1^0 a_2^0\cdots a_k^0}, \label{4.36}
\end{align}
then
$$
W_k(q)=\widetilde{W}_k(q).
$$
 The reason for this is that when all the $x_i$ are
replaced by $q$, we can easily show that
 the constant term
in \eqref{4.35} is not affected if we replace any $a_i$ by
$a_i^{-1}$. Thus if we average out the right hand side of
\eqref{4.35} over all these interchanges the result will be simply
the right hand side of \eqref{4.36} due to the simple relation
$$
  1-{a_i^2+a_i^{-2}\over 2 }\ses {1 \over 2 } (1-a_i^2)(1-a_i^{-2}).
$$

Now \eqref{4.36} brings to evidence that $\widetilde{W}_k(x)$ is
$\CB_k$ invariant while $W_k(x)$ is not. Symmetrizing $W_k(x)$ gives
$\widetilde{W}_k(x)$. We can obtain either a $\CB_{k-1}$ invariant
decomposition or a $\CB_k$ invariant decomposition of
$\widetilde{W}_k(x)$ just as for $F_k(x)$.

The orbit reduction can also be used to considerably speed up  steps
$\mathbf{a_k)}$  and $\mathbf{ b_k')}$ in Algorithm \ref{algo-Wk} of
the divided difference. The idea is similar as for the computation
of $G_5(q)$, but is much harder to be carried out.

To be clearer, we note that in step $ \mathbf{b_k')}$ we do not need
the complete generating function $W_{k-1}(x)$. One way is to replace
it by the more symmetric $\widetilde{W}_{k-1}(x)$. We have
$$
\widetilde{W}_{k-1}(x)=
\frac{1}{\prod_{i=1}^{2^{k-2}}(1-x_ix_{2^{k-1}+1-i})}\sum_{S\cup
T\con [1,,2^{k-2}]}\widetilde{W}_{S,T}(x),
$$
where $S$ and $T$ are disjoint as before. We only need to find orbit
representatives
$$
\widetilde{W}_{S_1,T_1}(x)\scs \widetilde{W}_{S_2,T_2}(x)\scs \ldots
,\widetilde{W}_{S_N,T_N}(x)
$$
with respective multiplicities $m_1,m_2,\ldots ,m_N$, since from
them we can rebuilt $\widetilde{W}_{k-1}(x)$, just as in
\eqref{e-F4}. Then in step $\mathbf{a_k)}$ we can replace
$\widetilde{W}_{k-1}(x)$ by the sum
$$
\widetilde{W}_{k-1}'(x)=
\frac{1}{\prod_{i=1}^{2^{k-2}}(1-x_ix_{2^{k-1}+1-i})}\sum_{i=1}^N
m_i \widetilde{W}_{S_i,T_i}(x)
$$
and, with a similar reasoning as for $G_k(q)$, obtain
\begin{align}
\label{e-Wk} W_k(q)= \sum_{i=1}^N m_i \Big( \dd_{1,1+2^{k-1}}\cdots
\dd_{2^{k-1}, 2^{k}}
\widetilde{W}_{S_i,T_i}(x)\Big)\Big|^{j=1,\dots,2^{k-1}-1}_{x_j=qa,x_{j+2^{k-1}=q/a}}
(1-a^2)\Bigg|_{a^0} .
\end{align}

When working with $W_5(q)$, we need an analogue of the collection of
orbit representatives together with orbit sizes as in Example
\ref{exa-k4}. Although Maple gives such a collection, we find it too
complicated to be handled by Maple when using \eqref{e-Wk}.

We find a way to avoid this problem. The idea is that in a formula
like \eqref{e-F4}, the $R_i$ need not be chosen to have
combinatorial meanings. This is best illustrated by the $k=3$ case.
We can clearly see the advantage of orbit reduction in producing a
compressed version of $\widetilde{W}_k(x)$. For $k=3$, the $\CB_3$
decomposition will give 9 orbits with only 7 of them contributing to
$\widetilde{W}_3(x)$. We thus get
$$\widetilde{W}^A_3(x)={1\over |\CB_3|} \sum_{g\in \CB_3} g \left(9\ monomials +{27 monomials \over 1-x_1x_4x_6x_7}\right)
.$$ The actual  formula is a little complicated and its
combinatorial meaning is not significant, but it is good enough for
us to use the divided
  difference algorithm  to compute $W_4(q)$. From this, by symmetrizing
and re-choosing   representatives, we obtain a simpler
representative.  Namely we end up obtaining that
$$\widetilde{W}^A_3(x)={1\over |\CB_3|} \sum_{g\in \CB_3} g
\left(-1+3\,x_{{2}}x_{{6}}-x_{{1}}x_{{2}}x_{{6}}x_{{4}} +
{2-6\,x_{{1}}x_{{7}}-{x_{{1}}}^{2}+6\,x_{{1}}{x_{{4}}}^{2}x_{{7}}-{x_{{1}}}^{2}{x_{{4}}}^{2}{x_{{7}}}^{2}\over
\left({1-x_{{1}}x_{{6}}x_{{4}}x_{{7}}}\right)}\right) 
,$$ which can also be used in our divided difference algorithm.
Originally we hoped that this formula would enable us to compute
$W_4(q)$ entirely by hand, but we were unable to do so.

For $k=4$, directly using the $\CB_4$ decomposition gives us $62$
orbits with $27$ of them contributing to $\widetilde{W}_4(x)$. The
representatives obtained this way are too complex for further
computation since several of them have thousands of monomials in
their numerators. The similar idea of symmetrizing and re-choosing
applies to give us $10$ reasonably simple representatives for
$\widetilde{W}_4(x)$, but typesetting them will take several pages.
Nevertheless we are able to use them in the divided difference
algorithm.

Having noticed that  for $k=2,3,4$ the divided difference algorithm
reduced the computation of $W_k(q)$ to a rather simple constant term
evaluation, we tried to see what it gave for $k=5$. Adding the
contributions of these $10$ representatives, before taking the
constant term, yielded a rational function of the form
\begin{multline*}
 {1 \over \left({1-{q}^{2}}\right)\left({1-{q}^{4}}\right)^{4}
\left({1-{q}^{6}}\right)\left(1-{{q}^{2}\over
{a}^{2}}\right)\left({1-{a}^{2}{q}^{2}}\right) \left(1-{{q}^{4}\over
{a}^{2}}\right)^{3}\left({1-{a}^{2}{q}^{4}}\right)^{3}}\\
  \times {357\ monomials \over
\left(1-{{q}^{4}\over
{a}^{4}}\right)^{2}\left({1-{a}^{4}{q}^{4}}\right)^{2}
\left(1-{{q}^{6}\over {a}^{2}}\right)\left({1-{a}^{2}{q}^{6}}\right)
\left(1-{{q}^{6}\over {a}^{4}}\right)\left({1-{a}^{4}{q}^{6}}\right)
\left(1-{{q}^{6}\over {a}^{6}}\right)
\left({1-{a}^{6}{q}^{6}}\right)  }.
\end{multline*}
It turns out that this is actually a rational function in $q^2$ and
$a^2$. Replacing $q$ by $q^{1/2}$ and $a$ by $a^{1/2}$ and then
taking constant term in $a$, we can obtain $W_5(q^{1/2})$. Using
this approach Maple can deliver $W_5(q)$ in only about 5 minutes in
total which is the shortest time we have been able to compute this
series.


\subsection{Our first algorithm to obtain $G_5(q)$ and $W_5(q)$}
Before closing it will be worthwhile to include a description of the
first algorithm that was used to obtain $G_5(q)$ and  $W_5(q)$ since
it contains another trick that clearly shows the flexibility
afforded by the partial fraction algorithm in the computation of
constant terms.

In this approach we begin by replacing our system  $\CS_k$ by a
system $\CS_k'$ which has the same cone of solutions. To describe
the new system we will use the $k$-tuple of sets model. The idea is
that originally we got $\CS_k$ by equating the cardinality of each
set to the cardinality of its complement obtaining
$$
\CS_k\ses \left \| \mymatrix{ |A_1|=|\ ^cA_1|\cr |A_2|=|\ ^cA_2|\cr
\cdots \cr |A_k|=|\ ^cA_k|\cr } \right. .
$$
Now it is quite clear that this is equivalent to set
\begin{align*}
 \CS_k'\ses \left \| \mymatrix{ |A_1|=d\cr |A_2|=d\cr \cdots \cr
|A_k|=d\cr |\ ^cA_1|=d} \right. .
\end{align*}
For instance, using the binary digit indexing of the variables, for
$k=3$ this results in the following system of 4 equations in 9
unknowns
\begin{align*}
\begin{array}{ccccccccccccccc} p_{000}&+& p_{001} &+& p_{010}  &+ & p_{011}    & &   & &  &
& & & \ess\ess\ess\ess\ssp \sms d\ses 0 \cr p_{000}&+& p_{001} &+& &
&      & +& p_{100}&+& p_{101}& & & & \ess\ess\ess\ess\ssp \sms
d\ses 0\cr
 p_{000}& &   &+& p_{010}  &  &      &+& p_{100}& &
&+& p_{110}  & & \ess\ess\ess\ess\ssp \sms d\ses 0\cr
  & &   & &   &  &     & & p_{100}&+& p_{101}&+& p_{110}&+& p_{111}   \ssp \sms d\ses 0
\end{array} \ . 
\end{align*}
This given, our rational function $G_3(q)=G_3(q,1)$ may be also
obtained by taking the following constant term
\begin{align}
\displaystyle{G_3(q,t)\ses \displaystyle{ 1\over1-q a_1a_2a_3}
{1\over1-q a_1a_2 }
 {1\over1-q a_1 a_3}
{1\over1-q a_1 } \rule{7cm}{0pt} \atop
\rule{3cm}{0pt}\ess\displaystyle {1\over1-q a_2a_3a_4} {1\over1-q
a_2a_4 } {1\over1-q a_3 a_4} {1\over1-q a_4 } {1\over
1-t/a_1a_2a_3a_4}\ssp \bigg|_{a_1^0a_2^0a_3^0a_4^0}. }\label{4.39}
\end{align}
Here we choose the order $q<t<a_1<a_2<\cdots$ and we can not set
$t=1$ as this moment yet.

Now it turns out to be expedient to start by eliminating $a_4$. This
can simply be done by omitting the factor $1/(1-t/a_1a_2a_3a_4)$ and
making the substitution   $a_4\,\RA\,  t/a_1a_2a_3$, obtaining
$$
\displaystyle{ G_3(q,t)\ses \displaystyle{1\over1-q a_1a_2a_3}
 {1\over1-q a_1a_2 }
{1\over1-q a_1 a_3} {1\over1-q a_1 }\rule{7cm}{0pt} \atop
\rule{3cm}{0pt} \displaystyle {1\over1-q t /a_1} {1\over1-qt /a_1a_3
} {1\over1-q t /a_1a_2} {1\over1-qt /a_1a_2a_3}\ssp
\bigg|_{a_1^0a_2^0a_3^0 }. } 
$$
Setting $t=1$ is valid here. Grouping terms containing the same
subset of the variables $a_1,a_2,a_3$ gives
\begin{align}
\qquad G_3(q)= {1\over1-q a_1 }&{1\over1-q  /a_1} \cr &\ess\ess
 {1\over1-q a_1a_2 }
{1\over1-q  /a_1a_2} \rule{9cm}{0pt} \label{4.41}\\
&\ess\ess\ess\ess\ess\ess\ess\ess\ess\ess\ess\ess {1\over1-q a_1
a_3}{1\over1-q /a_1a_3 } \cr
&\ess\ess\ess\ess\ess\ess\ess\ess\ess\ess\ess\ess\ess\ess\ess\ess\ess\ess\ess\ess\ess\ess\ess
 {1\over1-q a_1a_2a_3}{1\over1-q   /a_1a_2a_3}\ssp \bigg|_{a_1^0a_2^0a_3^0
 }.\nonumber
\end{align}
Likewise, we can easily see that the general form of \eqref{4.39} is
$$
G_k(q,t)\ses \bigg(\prod_{S\con[2,k]}{1\over 1- qa_1A(S) }\bigg)
\bigg(\prod_{S\con[2,k]}{1\over 1- qA(S)  a_{k+1} }\bigg) {1\over
1-t/a_1a_2\cdots a_k a_{k+1}}\ssp \bigg|_{a_1^0a_2^0\cdots
a_k^0a_{k+1}^0 }
$$
with
$$
A(S)\ses \prod_{i \in S}a_i.
$$
Removing the last factor and  setting $a_{k+1}=t/a_1a_2\cdots a_k$
gives
$$
G_k(q,t)\ses \bigg(\prod_{S\con[2,k]}{1\over 1- qa_1A(S) }\bigg)
\bigg(\prod_{S\con[2,k]}{1\over 1- qtA(S)/a_1a_2\cdots a_k }\bigg)
\bigg|_{a_1^0a_2^0\cdots a_k^0  }
$$
and by setting $t=1$ this can be rewritten as
$$
G_k(q)\ses \bigg(\prod_{S\con[2,k]}{1\over 1- qa_1A(S) }
 {1\over 1- q/a_1A(S)   }\bigg)
\bigg|_{a_1^0a_2^0\cdots a_k^0  }.
$$
Now comes the next trick: grouping terms according as $A(S)$
contains $a_2$ or not. This gives
\begin{align}
G_k(q)= \Bigg[\prod_{S\con[3,k]}{1\over 1- qa_1A(S) }
 {1\over 1- \frac{q}{a_1A(S)}   }\Bigg]
\Bigg[\prod_{S\con[3,k]}{1\over 1- qa_1a_2A(S) }
 {1\over 1- \frac{q}{a_1a_2A(S)}   }\Bigg]
\Bigg|_{a_1^0a_2^0\cdots a_k^0  }. \label{4.42}
\end{align}
To appreciate the significance of this step  let us see what this
gives for $k=3$. Grouping terms  in \eqref{4.41} as was done in
\eqref{4.42} gives
\begin{multline}
G_3(q) = {1\over1-q a_1 }{1\over1-q  /a_1} {1\over1-q a_1
a_3}{1\over1-q /a_1a_3  } \\
{1\over1-q a_1a_2 }{1\over1-q /a_1a_2}
{1\over 1-q a_1a_2a_3} {1\over 1-q  /a_1a_2a_3}\ssp
\bigg|_{a_1^0a_2^0a_3^0 }. \label{4.43}
\end{multline}

Let us now see what the partial fraction algorithm gives if we first
eliminate $a_2$. This entails computing the constant term
$$
Q\ses {1\over1-q a_1a_2 }{1\over1-q  /a_1a_2} {1\over 1-q a_1a_2a_3}
{1\over 1-q   /a_1a_2a_3}\ssp \bigg|_{ a_2^0  }.
$$
Using the terminology of \cite{2} we note that the first and third
factors are  contributing and the other two are dually contributing.
Thus,
\begin{align}
 Q\ses {A_1\over 1-qa_1a_2}\sps {A_3\over 1-qa_1a_2a_3}\ssp
\bigg|_{ a_2^0  }\ses A_1+A_3 \label{4.44}
\end{align}
with
\begin{align*}
A_1 &= {a_1^2a_2^2  a_3 \over   (a_1a_2-q)    (1-q a_1a_2a_3)
   (a_1a_2a_3-q)   }\ssp \bigg|_{ a_2=1/qa_1 }
= {1\over
   (1-q^2)(1-a_3)(1-q^2/a_3)} 
   \\
A_3 &= {a_1^2a_2^2  a_3 \over (1-q a_1a_2) (a_1a_2-q)
   (a_1a_2a_3-q)   }\ssp  \bigg|_{ a_2=1/qa_1a_3 }
= { a_3\over (a_3-1) (1-q^2a_3)(1-q^2)}.
\end{align*}
Using \eqref{4.44} in \eqref{4.43} gives
\begin{align}
G_3(q) &={1\over1-q a_1 }{1\over1-q /a_1} {1\over1-q a_1 a_3}
{1\over1-q /a_1a_3  } \Big(A_1\sps A_3\Big) \bigg|_{a_1^0 a_3^0 }
\cr &= {1\over1-q a_1 }{1\over1-q /a_1} {1\over1-q a_1 a_3}
{1\over1-q /a_1a_3  }
 \bigg|_{a_1^0  }\Big(A_1\sps A_3\Big)\bigg|_{  a_3^0 }
. \label{4.47}
\end{align}
The last equality is due to the fact that $A_1$ and $A_3$ do not
contain $a_1$. Next we will compute  the constant term
$$
  Q'\ses {1\over1-q a_1 }{1\over1-q  /a_1} {1\over1-q a_1 a_3} {1\over1-q /a_1a_3  }
 \bigg|_{a_1^0  }.
$$
The surprise, which is the whole point of the factorization in
\eqref{4.42}, is that this leads to the same partial fraction
decomposition! More precisely we see that
$$
Q'\ses {B_1\over 1-qa_1 }\sps {B_3\over 1-qa_1 a_3}\ssp \bigg|_{
a_1^0  }\ses B_1+B_3
$$
with
\begin{align*}
 B_1 &= {a_1^2   a_3 \over   (a_1 -q)    (1-q a_1 a_3)
   (a_1 a_3-q)   }\ssp \bigg|_{ a_1=1/q  }
= {1\over (1-q^2)(1-a_3)(1-q^2/a_3)}\ses A_1 \\
B_3 &= {a_1^2   a_3 \over (1-q a_1 )   (a_1 -q)
   (a_1a_2a_3-q)   }\ssp  \bigg|_{ a_2=1/qa_1a_3 }
= { a_3\over (a_3-1) (1-q^2a_3)(1-q^2)} \ses A_3 .
\end{align*}
Thus \eqref{4.47} becomes
$$
G_3(q)\ses (A_1+A_3)^2\ssp \bigg|_{a_3^0}\ses A_1^2 \ssp
\bigg|_{a_3^0}\sps A_3^2\ssp \bigg|_{a_3^0}+2 A_1A_3\ssp
\bigg|_{a_3^0}.
$$
It is easy to see that the same collapse of terms occurs  in the
general case. Indeed we can rewrite \eqref{4.42} in the form
$$G_k(q)=
\bigg[\prod_{S\con[3,k]}{1\over 1- qa_1a_2A(S) }
\displaystyle{1\over 1- q/a_1a_2A(S)
}\bigg|_{a_2^0}\bigg]\bigg[\prod_{S\con[3,\ldots ,k]}{1\over 1-
qa_1A(S) } {1\over 1- q/a_1A(S)   } \bigg|_{a_1^0 } \bigg]\Bigg|_{
a_3^0\cdots a_k^0}. 
$$
We can see  that, in both constant terms with respect to $a_1$ and
$a_2$,     the first member of each pair of
 factors contributes and the second dually contributes, and  the partial fraction algorithm yields
$$
 \prod_{S\con[3,\ldots ,k]}{1\over 1- qa_1a_2A(S) }  {1\over 1- q/a_1a_2A(S)   }\bigg|_{a_2^0}
\ses \sum_{T\con[3,\ldots ,k]}  {C_T\over   1- qa_1a_2A(T)
}\bigg|_{a_2^0}\ses  \sum_{T\con[3,\ldots ,k]}   C_T
$$
with
\begin{align*}
  C_T
&= \Big(1- qa_1a_2A(T)  \Big)\prod_{S\con[3,\ldots ,k]}{1\over 1-
qa_1a_2A(S) }  {1\over 1- q/a_1a_2A(S)   } \Bigg|_{a_2=1/qa_1A(T)}
\cr &= {1\over \left(1- q/a_1a_2A(T)\right) }
\prod_{\multi{S\con[3,\ldots ,k]\cr S\neq T}}{1\over 1- qa_1a_2A(S)
}  {1\over 1- q/a_1a_2A(S) } \Bigg|_{a_2=1/qa_1A(T)} \cr &= {1\over
(1- q^2) } \prod_{\multi{S\con[3,\ldots ,k]\cr S\neq T}} {1\over 1-
A(S)/A(T) }  {1\over 1- q^2A(T)/ A(S)   }
\end{align*}
and we see that, as in the case $k=3$, all of these coefficients are
independent of $a_1$. Moreover we can also easily see that
$$
 \Big(1- qa_1 A(T) \Big)\prod_{S\con[3,\ldots ,k]}{1\over 1- qa_1 A(S) }  {1\over 1- q/a_1 A(S)   }
\Bigg|_{a_1=1/q A(T)}\ses C_T.
$$
This reduces the computation of $G_k(q)$ to the sum of
$2^{k-2}+{2^{k-2} \choose 2}$ constant terms of the form
$$
G_k(q)= \sum_{i=1}^{2^{k-2} } A_i^2\bigg|_{ a_3^0\cdots a_k^0} \sps
2 \sum_{1\le i<j\le 2^{k-2} }  A_iA_j\bigg|_{ a_3^0\cdots a_k^0}.
$$
Note that for $k=5$ we are reduced to the calculation of
$2^3+{2^3\choose 2}=36$ constant terms. Most importantly in each of
these constant terms the denominators have at most 14 factors. The
latest version of the partial fraction algorithm (whose update is
motivated by the computation of $G_5(q)$) posted in the web site

 \centerline {\ita
http://www.combinatorics.net.cn/homepage/xin/maple/ell2.rar}

\noindent computed these 36 constant terms on a Pentium 4 Windows
system computer with a 3G Hz processor in about 22 minutes which is
a considerable time reduction from the 2 hours and 15 minutes that
took  previous versions of the algorithm to compute these constant
terms.

The same approach can be used to calculate $W_5(q)$, but in a much
simpler way. The constant terms have to be appropriately modified.
Again we will start with the case $k=3$.

The $k$-tuple of sets interpretation of the constant  term in
\eqref{2.3} given in Section \ref{sec-2}, yields that to obtain
$W_k(q)$ we must compute the constant terms corresponding
 to the $2^k$ systems obtained by requiring each $A_i$ to have $2$ or $0$  more elements than its complement
in all possible ways and then carry out an inclusion exclusion type
alternating sum of the results.

 A moments reflection should reveal that  to get $W_3(q)=W_3(q,1)$ we need only modify \eqref{4.39} to
\begin{align}
W_3(q,t)&= \Big(\big(1-a_4/a_1\big) \big(1-1/a_2\big)
\big(1-1/a_3\big) \Big) \cr & \qquad \quad \times {1\over1-q
a_1a_2a_3} {1\over1-q a_1a_2 }
 {1\over1-q a_1 a_3}
{1\over1-q a_1 }\cr &\qquad\qquad\qquad \times
 {1\over1-q a_2a_3a_4}
{1\over1-q a_2a_4 } {1\over1-q a_3 a_4} {1\over1-q a_4 } {1\over
1-t/a_1a_2a_3a_4}\ssp \bigg|_{a_1^0a_2^0a_3^0a_4^0}.  \label{4.49}
\end{align}
In fact  expanding the first factor gives the 8 terms
$$
1-1/a_2-1/a_3-a_4/a_1+a_4/a_1a_2+a_4/a_1a_3+1/a_2a_3 -a_4/a_1a_2a_3.
$$
And we see that  the $8$ constant terms obtained by expanding this
factor in \eqref{4.49} correspond in order to the following $8$
modified versions of $\CS_3'$
$$
 \left \|
\mymatrix{ |A_1|&=& d\cr |A_2|&=& d \cr |A_3|&=& d \cr |\ ^cA_1|&=&
d\cr } \right. \scs \ess\ess\ess
 \left \|
\mymatrix{ |A_1|&=& d+1\cr |A_2|&=& d \cr |A_3|&=& d \cr |\
^cA_1|&=& d-1\cr } \right. \scs  \ess\ess\ess \left \| \mymatrix{
|A_1|&=& d\cr |A_2|&=& d+1 \cr |A_3|&=& d \cr |\ ^cA_1|&=& d\cr }
\right. \scs \ess\ess\ess \left \| \mymatrix{ |A_1|&=& d\cr |A_2|&=&
d \cr |A_3|&=& d+1 \cr |\ ^cA_1|&=& d\cr } \right.
$$
$$
 \left \|
\mymatrix{ |A_1|&=& d+1\cr |A_2|&=& d+1 \cr |A_3|&=& d \cr |\
^cA_1|&=& d-1\cr } \right. \scs \ess\ess\ess
 \left \|
\mymatrix{ |A_1|&=& d+1\cr |A_2|&=& d \cr |A_3|&=& d+1 \cr |\
^cA_1|&=& d-1\cr } \right. \scs  \ess\ess\ess \left \| \mymatrix{
|A_1|&=& d\cr |A_2|&=& d+1 \cr |A_3|&=& d+1 \cr |\ ^cA_1|&=& d\cr }
\right. \scs \ess\ess\ess \left \| \mymatrix{ |A_1|&=& d+1\cr
|A_2|&=& d+1 \cr |A_3|&=& d+1 \cr |\ ^cA_1|&=& d-1\cr } \right.
$$
\sas

\noindent Now the elimination of $a_4$ in \eqref{4.49} and then
setting $t=1$ (as for $G_3(q)$) gives \begin{align*}
 W_3(q)\ses
&\Big(\big(1-1/a_1^2a_2a_3\big) \big(1-1/a_2\big) \big(1-1/a_3\big)
\Big)\times \cr &\ess\ess\ess\times {1\over1-q a_1a_2a_3} {1\over1-q
a_1a_2 }
 {1\over1-q a_1 a_3}
{1\over1-q a_1 } \cr
&\ess\ess\ess\ess\ess\ess\ess\ess\ess\ess\ess\ess\times {1\over1-q
/a_1} {1\over1-q /a_1a_3 } {1\over1-q  /a_1a_2} {1\over1-q
/a_1a_2a_3}\ssp \bigg|_{a_1^0a_2^0a_3^0 }.
\end{align*}
For general $k$, we are left to compute the constant term
$$ W_k(q) =  \big(1-1/a_1^2a_2\cdots a_k\big)\prod_{i=2}^k \big(1-1/a_i \big)  \bigg(\prod_{S\con[2,k]}{1\over 1- qa_1A(S) }
 {1\over 1- q/a_1A(S)   }\bigg)
\bigg|_{a_1^0a_2^0\cdots a_k^0  }.  $$ Using this formula, the
updated package will directly deliver $W_5(q)$ in about 17 minutes.
This is because   the factors in the numerator nicely cancel some of
the denominators of the intermediate rational functions.

\bibliographystyle{plain}


\end{document}